\documentclass[12pt,oneside,english]{amsart}
\usepackage{lmodern}
\usepackage{helvet}

\usepackage[T1]{fontenc}
\usepackage[latin9]{inputenc}
\usepackage{amsthm}
\usepackage{amstext}
\usepackage{amssymb}
\usepackage{graphicx}
\usepackage{verbatim}
\makeatletter

\newcommand{\noun}[1]{\textsc{#1}}

\numberwithin{equation}{section}
\numberwithin{figure}{section}
  \theoremstyle{plain}
  \newtheorem*{thm*}{\protect\theoremname}
  \theoremstyle{plain}
  \newtheorem*{prop*}{\protect\propositionname}
\theoremstyle{plain}
\newtheorem{thm}{\protect\theoremname}[section]
  \theoremstyle{plain}
  \newtheorem{lem}[thm]{\protect\lemmaname}
  \theoremstyle{remark}
  \newtheorem{rem}[thm]{\protect\remarkname}
  \theoremstyle{definition}
  \newtheorem{defn}[thm]{\protect\definitionname}
  \theoremstyle{plain}
  \newtheorem{prop}[thm]{\protect\propositionname}
  \theoremstyle{plain}
  \newtheorem{cor}[thm]{\protect\corollaryname}
  \theoremstyle{definition}
  \newtheorem{example}[thm]{\protect\examplename}

\usepackage{amsfonts}
\usepackage{amssymb}
\usepackage[all]{xy}
\usepackage{array}
\usepackage{lmodern}
\usepackage[T1]{fontenc}

\def\QQ{\mathbb{Q}}
\def\RR{\mathbb{R}}
\def\CC{\mathbb{C}}
\def\UU{\mathbb{U}}
\def\FF{\mathbb{F}}
\def\ZZ{\mathbb{Z}}
\def\PP{\mathbb{P}}
\def\sl{\mathfrak{sl}_2}
\def\vf{\varphi}
\def\vft{\tilde{\vf}}
\def\lm{\mathfrak{m}}
\def\ad{\text{ad}}
\def\Ad{\text{Ad}}
\def\lz{\mathfrak{z}}
\def\lq{\mathfrak{q}}
\def\lp{\mathfrak{p}}
\def\lg{\mathfrak{g}}
\def\sur{\twoheadrightarrow}

\theoremstyle{definition}

\theoremstyle{definition}

\theoremstyle{theorem}
\newtheorem*{jm}{Jacobson-Morosov}
\theoremstyle{theorem}
\newtheorem*{kostant}{Kostant}
\theoremstyle{theorem}
\newtheorem*{schmid}{Schmid}
\theoremstyle{definition}
\newtheorem*{thx}{Acknowledgments}

\newcommand*{\@old@slash}{}\let\@old@slash\slash
\def\slash{\relax\ifmmode\delimiter"502F30E\mathopen{}\else\@old@slash\fi}

\makeatother
\def\backslash{\delimiter"526E30F\mathopen{}}
\usepackage{babel}
  \providecommand{\corollaryname}{Corollary}
  \providecommand{\definitionname}{Definition}
  \providecommand{\examplename}{Example}
  \providecommand{\lemmaname}{Lemma}
  \providecommand{\propositionname}{Proposition}
  \providecommand{\remarkname}{Remark}
  \providecommand{\theoremname}{Theorem}
\providecommand{\theoremname}{Theorem}

\begin{document}

\title{Boundary components of Mumford-Tate domains}

\author{Matt Kerr and Gregory Pearlstein}

\subjclass[2000]{14D07, 14M17, 17B45, 20G99, 32M10, 32G20}
\begin{abstract}
We study certain spaces of nilpotent orbits in the Hodge domains introduced
by \cite{GGK}, and treat a number of examples. More precisely, we
compute the Mumford-Tate group of the limit mixed Hodge structure
of a generic such orbit. The result is used to present these spaces
as iteratively fibered algebraic-group orbits in a minimal way. 
\end{abstract}
\maketitle

\section{Introduction}

Mumford-Tate groups are the natural $\QQ$-algebraic symmetry groups
of Hodge structures, in the sense of stabilizing the Hodge substructures
of all tensor powers of a Hodge structure and its dual. The modern
definition, for Hodge structures of arbitrary weight, is due to Deligne
\cite{De3}, with a generalization to mixed Hodge structures by Deligne (unpublished) and Andr\'e
\cite{An}. They were originally introduced by Mumford and Tate \cite{Mu}
to provide a Hodge-theoretic characterization of certain families
of abelian varieties studied by Kuga and Shimura. In that ``classical''
context of weight one Hodge structures, they led to Deligne's reformulation
of Shimura varieties in terms of reductive groups (cf. \cite{De5,Mi,Ke}) and his subsequent
proof that Hodge cycles on abelian varieties are absolute (cf. \cite{De}). 
Hence they already have deep and longstanding ties
with Hodge theory and algebraic geometry on the one hand, and with
the theory of automorphic representations on the other. Amplifying
the latter point, it turns out that (up to isogeny) the semisimple
real Lie groups admitting discrete series representations are precisely
those which arise as the group of real points $G(\mathbb{R})$ of
a Mumford-Tate group of a pure Hodge structure.

By the work of Griffiths \cite{Gr}, the cohomology of a family of
smooth complex projective varieties $\mathcal{X}\to\mathcal{S}$ determines
a period map 
\[
\Phi:\mathcal{S}\to\Gamma\backslash D
\]
to a discrete quotient of a \emph{period domain} $D$ -- that is,
a classifying space for pure polarized Hodge structures with given
Hodge numbers. As in the theory of Shimura varieties, it is more natural
to allow $D$ to be a general \emph{Mumford-Tate domain}, parametrizing
only those Hodge structures with the same ``symmetries'' as those
in $\Phi$. (This is a homogeneous space for the corresponding $G(\mathbb{R})$.)
Whenever $D$ is Hermitian symmetric, the quotient $\Gamma\backslash D$
admits the toroidal compactifications of \cite{AMRT}. In the special
case of weight 1 period domains (Siegel spaces), this compactifaction
was reinterpreted by \cite{CCK} in terms of the adjunction of Hodge-theoretic
boundary components $B(\sigma)$. The latter parametrize the possible
limiting mixed Hodge structures (LMHS) arising from the degeneration
of $\Phi$ at a boundary point $x\in\bar{\mathcal{S}}\backslash\mathcal{S}$,
where the family of varieties undergoes a degeneration. Kato and Usui
recently extended this approach in \cite{KU} to obtain partial compactifications
$\Gamma\backslash D_{\Sigma}$ for higher weight period domains.

The purpose of the present paper is to study degenerations of Hodge
structure in the more general context of arbitrary Mumford-Tate domains,
with an emphasis on the higher weight setting. More precisely, we
shall introduce boundary components classifying the possible nilpotent
orbits, and present them group-theoretically (as double-coset spaces)
in a ``minimal'' way. In marked contrast to the Shimura case, when
$D$ is non-Hermitian the quotients $\Gamma\backslash D$ are usually
not algebraic varieties \cite{GRT}. Nevertheless, it has become clear
in recent years that these quotients support a rich geometry -- including
a dense honeycomb of Shimura subvarieties and CM points -- which has
begun to be explored in \cite{C,C1,C2}, \cite{FL}, \cite{FHW},
\cite{GGK}, and \cite{Ro1,Ro2}.

There are several compelling reasons to introduce and study their
boundary components. One is to make predictions about the form (and
arithmetic nature) of the LMHS for variations of Hodge structure with
given symmetries, and use this information in turn to classify the
possible period maps. Indeed, a variation with fixed underlying local
system is determined by a single LMHS, and in keeping with the general
simplifying effect of degenerating in algebraic geometry, the latter
is often more accessible to computation. Furthermore, while $\Gamma\backslash D$
may be non-algebraic, the corresponding discrete quotient \emph{of
a boundary component} may not only be algebraic, but may be \emph{as
simple as a CM abelian variety or elliptic modular surface}.

From the standpoint of automorphic representation theory, this is
significant: it raises the possibility of putting a $\bar{\QQ}$-algebraic
structure on the coherent cohomology of a non-classical $\Gamma\backslash D$
(see the remarks on Fourier coefficients below). One case which surely
warrants further investigation is the split-$G_{2}$-domain parametrizing
Hodge structures of weight $6$ with Hodge numbers $h^{6,0}=h^{5,1}=\cdots=h^{0,6}=1$.
It has three types of boundary component quotients, two of which --
isomorphic to $\CC^{*}$ resp. a (noncompact) elliptic modular surface
-- are algebraic. Moreover, the totally degenerate limit of discrete
series for $G_{2}$ contributes to its automorphic cohomology.

Beyond clarifying the relationship between LMHS and Hodge representations,
our construction of boundary components in this paper has several
applications to Hodge theory and complex geometry, some of which 
are treated in follow-up works (based on this paper). In particular, in \cite{KP2,KR} we
use it to interpret Hodge-theoretically ``most'' of the $G(\RR)$-orbits
in the topological boundary of $D$ in its compact dual $\check{D}$.
Meanwhile, \cite{GGK4} provides a general introductory discussion
of $G(\RR)$-orbits in $\check{D}$ and the cycle space $\check{\mathcal{U}}$,
combining the root-weight analysis of such in \cite{FHW} with the
Hodge-theoretic interpretation.

\section*{Summary of Results}

Let $D=G(\RR)/H$ be a Mumford-Tate domain and $\Gamma\leq G(\QQ)$
be a torsion-free, non-co-compact congruence subgroup. Then the boundary
components $B(\sigma)$ we consider parametrize all nilpotent orbits
subordinate to a given nilpotent cone $\sigma$. Computing the Mumford-Tate
group\footnote{Here our conventions differ with some of the literature: see the
``Concluding Remarks'' in this Introduction.} of the generic limit mixed Hodge structure parametrized by a
boundary component, in particular, is a natural and unexpectedly subtle
problem which arose during the writing of \cite{GGK} (cf. $\S$I.C).
Our solution is given in Theorem \ref{thm1} for $\dim(\sigma)=1$
($\sigma=\mathbb{R}_{\geq0}\langle N\rangle$, $N\in\mathfrak{g}_{\mathbb{Q}}$),
and may be interpreted as follows:
\begin{thm*}
Let $D=G(\RR)/H$ be a M-T domain, and assume the set $B(N)$ of $N$-nilpotent
orbits $e^{\tau N}F^{\bullet}$ in $\check{D}$ is nonempty. Then
the M-T group $M_{B(N)}$ of the LMHS $(F^{\bullet},W(N)_{\bullet})$
of a general such orbit is a subgroup of the centralizer of $N$ in
$G$, admitting a filtration by normal subgroups $M_{B(N)}=W_{0}M_{B(N)}\trianglerighteq W_{-1}M_{B(N)}\trianglerighteq\cdots$
with\\
(i) $Gr_{0}^{W}M_{B(N)}=:G_{B(N)}$ reductive, and equal to the M-T
group of a general $\varphi_{split}:=\oplus_{i}Gr_{i}^{W(N)}F^{\bullet}$;\\
(ii) $Gr_{k}^{W}M_{B(N)}$ abelian for $k<0$; and\\
(iii) $W_{-1}M_{B(N)}=M_{N}:=\exp\left\{ \text{im}(\text{ad}N)\cap\ker(\text{ad}N)\right\} $
its unipotent radical.\\
Moreover, $M_{N}(\mathbb{C})\rtimes G_{B(N)}(\mathbb{R})$ acts transitively
on the corresponding set $\tilde{B}(N)$ of LMHS.
\end{thm*}
Theorem \ref{mainthm} generalizes the above statements beyond the rank one
case.  When $D$ is Hermitian symmetric, these results are related to work
of Milne and Pink on the mixed Shimura varieties involved in the toroidal 
compactifications of $\Gamma\backslash D$ \cite{Mi,Pi}.  Indeed, the quotients
$\left( \Gamma\cap\text{stab}\langle\sigma\rangle\right)\backslash B(\sigma)$
recover these classical boundary components, in effect extending and completing
the work of \cite{CCK} (cf. Remark \ref{HSD remark}).  So the boundary objects
considered in this paper are a common generalization of the Kato-Usui components
and the toroidal ones.

One motivation for studying them (and pursuing Theorem \ref{mainthm}) was to give a precise relation between
the $B(\sigma)$'s and classifying spaces for polarized MHS of the
type considered by Hertling \cite{He},\footnote{These spaces are also homogeneous manifolds with respect to the semi-direct product of a real reductive group and a complex unipotent group, a phenomenon first observed in the earlier studies \cite{Car,Us,SSU}.} and use this to understand
their structure. This is carried out below in the first half of $\S$7,
and leads to the following for $\sigma=\mathbb{R}_{\geq0}\langle N\rangle$:
\begin{thm*}
If $\Gamma$ is neat, then $\overline{B(N)}:=\left(\Gamma\cap\text{stab}\langle N\rangle\right)\backslash B(N)$
admits a $C^{\infty}$ fibration tower
\[
\overline{B(N)}\twoheadrightarrow\cdots\twoheadrightarrow\overline{B(N)}_{(k)}\overset{\overline{\rho}_{(k)}}{\twoheadrightarrow}\cdots\twoheadrightarrow\overline{B(N)}_{(1)}\overset{\overline{\rho}_{(1)}}{\twoheadrightarrow}\overline{D(N)}
\]
with (for $k>1$, generalized) intermediate Jacobian fibers at each
stage, and base $\overline{D(N)}$ a discrete quotient of the M-T
domain $G_{B(N)}(\mathbb{R}).\varphi_{split}$.
\end{thm*}
Another piece of motivation comes from the study of automorphic cohomology
in \cite{C} and \cite{GGKauto}. Let $D$ be a nonclassical Mumford-Tate
domain, where ``nonclassical'' is meant in the strong sense of possessing
no nonconstant holomorphic or antiholomorphic fibration over a Hermitian
symmetric domain. Then with $\Gamma$ as above, the quotient $\Gamma\backslash D$
is non-algebraic \cite{GRT}.%
\footnote{In the case where $\Gamma$ is instead co-compact, this has been known
in certain cases for some time \cite{CT}.%
} In particular, one cannot use sections of nontrivial holomorphic
homogeneous line bundles $\mathcal{L}$ (such as $K_{D}^{\otimes m}$)
to embed $\Gamma\backslash D$ in projective space; in fact, it is
strongly suspected that all spaces $H^{0}(D,\mathcal{O}(\mathcal{L}))^{\Gamma}$
of holomorphic automorphic forms are zero. Instead, one can consider
the higher automorphic cohomology groups $H^{p}(D,\mathcal{O}(\mathcal{L}))^{\Gamma}$
introduced in \cite{GS}. In the special case studied by Carayol \cite{C}
(also see Example \ref{exa:(Carayol's-example-)} below), in spite
of the non-algebraicity of $\Gamma\backslash G(\RR)/H$, the boundary
component quotients turn out to be isomorphic either to $\mathbb{C}^{*}$,
a CM elliptic curve $E$, or its conjugate $\bar{E}$. For certain
spaces of automorphic cohomology classes, he defines \emph{generalized
Fourier coefficients} in $\{H^{1}(E,\mathcal{O}(-n))\}_{n\in\mathbb{N}}$,
and proves that the space is spanned over $\mathbb{C}$ by classes
for which all of these coefficients are defined over $\bar{\QQ}$.
Such an arithmetic structure is potentially very useful from the standpoint
of proving algebraicity of the Hecke eigenvalues of automorphic representations
not appearing in the coherent cohomology of any Shimura variety.

Consequently one desires, as a first step, some simple conditions
under which $\Gamma_{\sigma}\backslash B(\sigma)$ ($\Gamma_{\sigma}
:=\Gamma\cap \text{stab}(\sigma)$) has a canonical
model over $\bar{\QQ}$. Such conditions are laid out in Proposition
\ref{prop hatch} and Theorem \ref{thm structure}, whose corollaries
include the
\begin{prop*}
Let $M$ be an adjoint $\mathbb{Q}$-algebraic group, $D=M(\mathbb{R})/H$
a Hodge domain parametrizing level $2\ell$ Hodge structures on $\mathfrak{m}=Lie(M)$,
and $B(N)\neq\emptyset$. If\\
(i) $\ker(\text{ad}N)\cap\text{im}\left\{ (\text{ad}N)^{2}\right\} =\langle N\rangle$
and\\
(ii) $[\ker(\text{ad}N),\ker(\text{ad}N)]\subseteq\text{im}(\text{ad}N)$\\
hold, then $\overline{B(N)}$ is a CM abelian variety\footnote{We need not have
$Gr^{W(N)}_{-1}\mathfrak{m}\cong H^1(\overline{B(N)})(1)$, since it may
be of type (say) $(-2,1)+(1,-2)$ as in Carayol's example.} of dimension
$\frac{1}{2}\dim\left(Gr_{-1}^{W(N)}\mathfrak{m}\right)$ over $\CC$.
\end{prop*}
At least for $G$ of Hermitian type, we expect that (as in Carayol's
example) one can use such boundary components to layer a notion of
arithmeticity on to a subspace of automorphic cohomology by asking
for those classes whose generalized Fourier coefficients are defined
over $\bar{\QQ}$. Though we won't pursue this application here, it
has influenced our decision to focus on CM abelian varieties, cf.
Remark \ref{rem7.12}.

Our results on the structure of $\overline{B(N)}$ are put to work
on examples in $\S$8. There, we first consider cases where $D$ is
a period domain for Hodge structures of weight 1, 2, and 3 (with $G=Sp_{4}$
or $SO(4,1)$), the last of which was the focus of \cite{GGK2}. Turning
to subdomains, we then treat Carayol's weight 3 example (with $G=U(2,1)$
and Hodge numbers $(1,2,2,1)$) and an ``exceptional'' domain for
weight 2 Hodge structures with Mumford-Tate group $G_{2}$ and Hodge
numbers $(2,3,2)$. The boundary components for two other $G_{2}$-related
examples, including the one mentioned earlier, will be worked out
from an alternative viewpoint in \cite{KP2}.

To carry out this program one also needs to know how the $\{\Gamma_{\sigma}
\backslash B(\sigma)\}$ are adjoined to $\Gamma \backslash D$.  Theorem \ref{thm Ku}
and its proof show how to generalize the results of \cite{KU} on logarithmic partial 
compactifications to the Mumford-Tate setting.

It is natural to expect that the new technology of Mumford-Tate domains
and their boundary components should have implications for variations
of Hodge structures, say over a fixed curve $\mathcal{S}$. In $\S10$
we obtain some refined rigidity and Arakelov-type finiteness results
for VHS on $\mathcal{S}$ with fixed local system, Hodge numbers and
M-T group. We make the elementary observation that they might be classified
by their values in a single boundary component, which already in the
rigid case should be of arithmetic interest. To a Hodge theorist,
this application should legitimize our construction all on its own!
A key example is the geometric $G_{2}$-VHS (with Hodge numbers $(1,1,1,1,1,1,1)$)
arising in the work of Dettweiler and Reiter \cite{DR} (partly based
on \cite{Ka}), which is the subject of $\S9$.

\section*{Hodge Domains}

Let $G$ be an algebraic group defined over $\QQ$ and $k\subseteq\CC$
a field; then the group of $k$-rational points will be denoted $G(k)$.%
\footnote{A concise review of algebraic groups may be found in \cite[sec. I.A]{Ke}.%
} When $k$ is $\RR$ (resp. $\CC$), this will be regarded as a real
(resp. complex) Lie group with Lie algebra $\lg$ (resp. $\lg_{\CC}$).%
\footnote{We will write $\lg_{\QQ}$ for the underlying $\QQ$-vector space,
and $\lg_{k}:=\lg_{\QQ}\otimes_{\QQ}k$.%
} Two examples which play a distinguished role below are the algebraic
tori $\UU\subset\mathbb{S}$ inside $GL_{2}$ with $k$-rational points
\[
\UU(k)=\left\{ \left.\left(\begin{array}{cc}
a & b\\
-b & a
\end{array}\right)\right|\begin{array}{c}
a^{2}+b^{2}=1\\
a,b\in k
\end{array}\right\} 
\]
\[
\mathbb{S}(k)=\left\{ \left.\left(\begin{array}{cc}
a & b\\
-b & a
\end{array}\right)\right|\begin{array}{c}
a^{2}+b^{2}\neq0\\
a,b\in k
\end{array}\right\} .
\]

Next recall where a Mumford-Tate domain comes from: let
\begin{itemize}
\item $V$ be a $\QQ$-vector space;
\item $Q$ be a $(-1)^{n}$-symmetric bilinear form on $V$;
\item $'\vf:\UU\to Aut(V,Q)$ be a morphism of algebraic groups defined
over $\RR$, with $'\vf(-1)=(-1)^{n}\text{id}_{V}$ and $Q(\cdot,{}'\vf(i)\overline{\cdot})>0$;
and
\item $'G\leq Aut(V,Q)$ be the $\QQ$-algebraic group closure of $'\vf(\UU(\RR))$.
\end{itemize}
Equivalently, $'\vf$ is a weight $n$ Hodge structure on $V$ polarized by $Q$,
and $'G$ is its Mumford-Tate group. The corresponding Mumford-Tate
domain is defined to be the orbit
\[
'D:={}'G(\RR).{}'\vf,
\]
under the action by conjugation. The \emph{compact dual} $'\check{D}$
of $'D$ is the (left-translation) $'G(\CC)$-orbit of the Hodge flag
$'F^{\bullet}$ corresponding to $'\vf$. This is always a complex
projective variety defined over a number field, and contains $'D$
as a real-analytic open subset.

Now, $'G$ is reductive, so we write
\begin{itemize}
\item $A:={}'G/'G^{\text{der}}$ for the maximal abelian quotient,%
\footnote{It follows from \cite[IV.A.2]{GGK} that $A(\RR)$ is compact.%
}
\item $M:={}'G^{\Ad}={}'G/Z_{'G}$ for the ($\QQ$-algebraic) adjoint group,
\end{itemize}
and consider the composition
\[
\vf:\UU\overset{'\vf}{\rightarrow}{}'G\overset{\tiny\begin{array}{c}
\text{finite}\\
\text{quotient}
\end{array}}{\twoheadrightarrow}{}'G/\{Z_{{}'G}\cap{}'G^{\text{der}}\}=A\times M=:G.
\]
Applying the obvious projections, we have $\vf_{A}:\UU\to A$ and
\[
\vf_{M}:\UU\overset{\vf}{\rightarrow}G\twoheadrightarrow M\subset Aut(\lm,B),
\]
which gives a polarized Hodge structure of weight zero on $\lm:=Lie(M)$.
(The polarizing form $B$ is restricted from $Q\otimes Q^{\vee}$
on $End(V)$; it need not be proportional to the Killing form if $M$
is non-simple, but we shall denote it by $B$ anyway.) This allows
us to present $'D$ as a \emph{Hodge domain}
\[
D:=M(\RR).\vf_{M}\cong G(\RR).\vf\cong{}'G(\RR).{}'\vf\,(={}'D),
\]
cf. \cite[sec. IV.F]{GGK}. The reason for the different notations
$D,\,{}'D$ is that we think of the first as a set of (weight zero)
polarized Hodge structures on $\lm$ and the second as polarized Hodge
structures on $V$ (usually of positive weight). This also illustrates
the fact that the same complex manifold may appear in many different
ways as a Mumford-Tate domain $G(\RR)/H$ (with different groups $G(\RR)$
and $H$).

\section*{$\mathfrak{sl}_{2}$-triples}

Representations of the Lie algebra $\sl$, spanned by
\[
n_{+}=\left(\begin{array}{cc}
0 & 1\\
0 & 0
\end{array}\right)\,,\;\; n_{-}=\left(\begin{array}{cc}
0 & 0\\
1 & 0
\end{array}\right)\,,\;\; y=\left(\begin{array}{cc}
1 & 0\\
0 & -1
\end{array}\right),
\]
will play a key role in computing the Mumford-Tate groups. Let $\FF$
be a subfield of $\CC$. Any triple of elements $e_{+},e_{-},x\in\lm_{\FF}$
satisfying 
\[
[x,e_{+}]=2e_{+},\;\;[x,e_{-}]=-2e_{-},\;\;[e_{+},e_{-}]=x
\]
is called an $\FF$-$\sl$-triple.%
\footnote{If $\FF=\CC$, we shall just call this an $\sl$-triple.%
} It is equivalent to a homomorphism
\[
\rho:SL_{2}\to M
\]
with $d\rho:\sl\to\lm$ defined over $\FF$ (and $x=d\rho(y)$, $e_{+}=d\rho(n_{+})$,
and $e_{-}=d\rho(n_{-})$).

Let a nilpotent element $e_{-}\in\lm_{\CC}$ be given. Define 
\[
\lm_{e_{-}}:=\text{im}(\ad e_{-})\cap\ker(\ad e_{-})
\]
and let $M_{e_{-}}$ denote the corresponding algebraic subgroup of
$M_{\CC}$. We have the two famous theorems:

\begin{jm}\cite{J-M,Mo} There exists an $\sl$-triple (in $\lm_{\CC}$)
with $e_{-}=d\rho(n_{-})$, i.e. as ``nil-negative element''. \end{jm}

\begin{kostant}\cite{Ko} \emph{(a)} Given additionally $x\in\lm_{\CC}$
satisfying
\[
\text{(i)}\,[x,e_{-}]=-2e_{-}\;\;\text{and}\;\;\text{(ii)}\, x\in\text{im}(\mathrm{ad}e_{-}),
\]
there is an unique choice of $e_{+}\in\lm_{\CC}$ completing $x,e_{-}$
to an $\sl$-triple.

\emph{(b)} The set of all $\sl$-triples having $e_{-}$ as nil-negative
element is the $M_{e_{-}}$-orbit
\[
\left\{ \left(\mathrm{Ad}g.x,\mathrm{Ad}g.e_{+},e_{-}\right)\left|g\in M_{e_{-}}(\CC)\right.\right\} =
\]
\[
\left\{ \left(e^{\mathrm{ad}\gamma}.x,e^{\mathrm{ad}\gamma}.e_{+},e_{-}\right)\left|\gamma\in\lm_{e_{-},\CC}\right.\right\} .
\]

\emph{(c)} The set of all $x$ occurring in this orbit (or equivalently,
all $x$ satisfying the assumption in (a)) is
\[
x+\lm_{e_{-},\CC}.
\]

\end{kostant}

In the entire statement of Kostant, one can replace $\CC$ by $\RR$
(when $e_{-}\in\lm_{\RR}$). According to the following observation,
we can also refine parts of the above for smaller ground fields:

\begin{schmid}\cite{Sc} If $e_{-}\in\lm_{\FF}$, then there exists
an $\FF$-$\sl$-triple containing it as nil-negative element, say
$(x,e_{+},e_{-})$. The set of all such is in $1$-to-$1$ correspondence
with $x+\lm_{e_{-},\FF}$. Hence, if $x'\in\lm_{\CC}$ satisfies (i)
and (ii) above, then the affine subset $x'+\lm_{e_{-},\CC}$ consisting
of all such is defined over $\FF$, and contains elements in $\lm_{\FF}$
(with corresponding ``$e_{+}$'' in $\lm_{\FF}$).

\end{schmid}

\section*{Concluding remarks}

We wrap up this introduction with a few additional comments on
notation and terminology. For an element $g$ in a Lie group, $\langle g\rangle$
denotes the subgroup it generates; for $\gamma$ an element of a Lie
algebra, the line it generates is denoted by $\langle\gamma\rangle$,
with the field understood from context (or a subscript). For example,
given a Hodge domain $D$ as above and a nilpotent element $N\in\lm_{\QQ}$,
a \emph{nilpotent orbit} is a subset $e^{\langle N\rangle_{\CC}}.F^{\bullet}\subseteq\check{D}$
where $N(F^{\bullet})\subseteq F^{\bullet-1}$ and $e^{\tau N}.F^{\bullet}\in D$
for $\Im(\tau)\gg0$.

A far more important point is our intentional (and consistent
throughout this paper) \emph{notational disregard for issues of disconnectedness}.
For instance, by $M(\RR)$ we shall always mean the identity connected
component in the sense of real Lie groups, usually written $M(\RR)^{+}$;
likewise, $D_{M}$ is always connected (and really an orbit of $M(\RR)^{+}$).
In addition, there are three further kinds of disconnectedness pertaining
specifically to boundary components:%
\footnote{These are stated for the rank-one case, cf. $\S$2 (where $B(N)$
and the $\{I^{p,q}\}$ are defined); they have obvious generalizations
replacing $N$ with $\sigma$.%
}
\begin{itemize}
\item In $M(\RR)^{+}$, $Z(N)$ can fail to be connected. (In fact, the
centralizer of $N$ in $M(\CC)$ can even fail to be connected. On
the other hand, $M_{N}$ -- real or complex -- is always connected:
indeed, since it is unipotent, one can connect each element to the
identity by a one-parameter subgroup.)
\item $B(N)$ can break up into different components with different $I^{p,q}$-dimensions.
These cannot be mapped one to another, even by $Z(N)(\CC)$ (which
respects $W(N)_{\bullet}$ hence must preserve $I^{p,q}$-dimensions).
\item Even within the subset of $B(N)$ comprising LMHS with fixed $I^{p,q}$-dimensions,
we can have different connected components since $Gr_{0}^{W}M_{B(N)}(\RR)$
may not be connected.
\end{itemize}
Instead of adding a ``$+$'' to every real Lie group and a ``connected
component of'' to every boundary component (and M-T domain), we have
simply left these things tacit throughout. There are, beyond readability,
two very good reasons for focusing on connected components: the first
is that it eases passing back and forth between the Lie-algebra/tangent-space
point of view, and the Lie-group/orbit perspective. The second is
that invariants (Mumford-Tate groups, fibration structure, etc.) of
the connected components in ``one'' boundary component are likely
to be different.%
\footnote{In \cite{GGK} (cf. Example VI.B.15), it was discovered that (the
finitely many) connected components of the subset of a period domain
comprising Hodge structures with Mumford-Tate group contained in a
given (Mumford-Tate) subgroup of $Aut(V,Q)$, need not have the same
generic Mumford-Tate group.%
}

Finally, we alert the reader that in this paper, a ``Hodge tensor'' means
a $\mathbb{Q}$-rational class of type $(p,p)$ (for any $p$) in the tensor 
algebra $T(V):=\oplus_{k,\ell}V^{\otimes k}\otimes\check{V}^{\otimes \ell}$
of the HS $V$.  The Mumford-Tate group, for us, is the group fixing the Hodge
tensors. While consistent with \cite{GGK}, this differs from the terminology
in \cite{An} [resp. \cite{De}] (for whom a ``Hodge tensor'' is type $(0,0)$),
whose ``Mumford-Tate group'' is equal [resp. isogenous] to our ``full MTG'' (cf.
Definition \ref{def24}(ii)).

\begin{thx}

The authors are grateful to P. Griffiths and M. Green for stimulating
correspondence, and P. Brosnan for a helpful discussion. This work
was partially supported by NSF Standard Grants DMS-1068974/1361147 (Kerr)
and DMS-1002625/1361120 (Pearlstein). Part of this paper was written while M.K. was a member at the Institute for Advanced Study, and he thanks the IAS for ideal working conditions and the Fund for Mathematics for partial support. Finally, special thanks go to the four referees,
whose careful reading and thorough comments have substantially
improved this paper.

\end{thx}

\section{Rank-one boundary components}

We continue with the setup of $\S$1. Postponing a fuller discussion
of Kato-Usui compactifications to $\S6$ so as to get straight to
a statement of the core result, write
\begin{itemize}
\item $M=$ adjoint Mumford-Tate group,
\item $D_{M}=$ Hodge domain for $M$
\end{itemize}
as above, and also:
\begin{itemize}
\item $N\in\lm_{\QQ}$ a nilpotent element;
\item $W(N)_{\bullet}:=$ the corresponding ``weight'' filtration on $\lm$,
centered about $0$; and
\item $\lm_{N}:=\text{im}(\ad N)\cap\ker(\ad N)=Lie(M_{N})$.
\end{itemize}
The boundary component associated to $N$ is defined by
\[
B_{M}(N):=e^{\langle N\rangle_{\CC}}\backslash \tilde{B}_{M}(N)\,\text{(}=\text{set of nilpotent orbits)}
\]
where
\[
\tilde{B}_{M}(N):=\left\{ F^{\bullet}\in\check{D}_{M}\left|\Ad e^{\tau N}F^{\bullet}\text{ is a nilpotent orbit}\right.\right\} .
\]
For any $F^{\bullet}\in\tilde{B}_{M}(N)$, we can consider the (weight 0) VHS
\[
\Phi_{F^{\bullet}}:\Delta^{*}\to\langle T\rangle\backslash D_{M}
\]
given by
\begin{itemize}
\item the $\QQ$-local system $\underline{\lm}\to\Delta^{*}$ given by the
monodromy operator $T:=e^{\ad N}$ on $\lm_{\QQ}$, and
\item the family of filtrations given with respect to the multivalued basis
of $\underline{\lm}_{\CC}$ by $e^{\frac{1}{2\pi i}\log(q)N}F^{\bullet}$.
\end{itemize}
This has LMHS $(\lm,F^{\bullet},W(N)_{\bullet})$ (henceforth abbreviated
$(F^{\bullet},W(N)_{\bullet})$), which means in particular that \emph{we
can think of $\tilde{B}_{M}(N)$ as a set of LMHS}.  Accordingly, define
\[
\tilde{B}_{M}^{\RR}(N):=\left\{ F^{\bullet}\in\tilde{B}_{M}(N)\left|(F^{\bullet},W(N)_{\bullet})\text{ is an }\RR\text{-split MHS}\right.\right\} ,
\]
and
\[
B_{M}^{\RR}(N):=e^{\langle N\rangle_{\RR}}\backslash \tilde{B}_{M}^{\RR}(N) \subset B_{M}(N).
\]

To understand the relationship between $M_{N}$ and the weight filtration
$W(N)_{\bullet}$, introduce:
\begin{itemize}
\item the centralizer
\[
Z_{M}(N):=\left\{ g\in M\left|\Ad(g)N=N\right.\right\} \supseteq M_{N},
\]
 with Lie algebra $\lz_{M}(N):=\ker(\ad N)$; and
\item the ``weight filtration''
\[
W_{\bullet}^{N}M:=\left\{ g\in M\left|(\Ad(g)-\text{id})W(N)_{i}\lm\subset W(N)_{i+\bullet}\lm\;(\forall i)\right.\right\} 
\]
on $M$ (and its subgroups).
\end{itemize}
We shall prove the following in $\S$\ref{sec3}:
\begin{lem}
\label{lem21}(i) $Lie(W_{k}^{N}M)=W(N)_{k}\lm$,

(ii) $Z_{M}(N)\subset W_{0}^{N}M$, and

(iii) $M_{N}=Z_{M}(N)\cap W_{-1}^{N}M$.\end{lem}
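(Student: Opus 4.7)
The plan is to work throughout at the Lie-algebra level, using a Jacobson--Morozov $\sl$-triple $(e_{+},Y,N)$ in $\lm_{\CC}$ and the resulting $\ad Y$-eigenspace decomposition $\lm=\bigoplus_{j}\lm(j)$, in terms of which $W(N)_{k}\lm=\bigoplus_{j\le k}\lm(j)$. Two structural facts underpin everything: $[\lm(a),\lm(b)]\subseteq\lm(a+b)$ (so $W(N)_{\bullet}$ is bracket-compatible), and $\ad:\lm\hookrightarrow\text{End}(\lm)$ is $\ad Y$-equivariant and \emph{injective} (the latter because $M$ is adjoint, hence $\lm$ has trivial center).

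For (i), I view $W_{k}^{N}M$ as the preimage under $\Ad:M\to\text{GL}(\lm)$ of the affine subvariety $\{A:(A-I)W(N)_{i}\subseteq W(N)_{i+k}\ \forall i\}$. This subvariety is $(I+F_{k}\text{End}(\lm))\cap\text{GL}(\lm)$, where $F_{k}\text{End}(\lm):=\{B:B(W(N)_{i})\subseteq W(N)_{i+k}\ \forall i\}$; so its tangent space at $I$ is $F_{k}\text{End}(\lm)$, and hence $Lie(W_{k}^{N}M)=\{X\in\lm:\ad X\in F_{k}\text{End}(\lm)\}$. Bracket-compatibility gives $W(N)_{k}\lm\subseteq Lie(W_{k}^{N}M)$ immediately. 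For the converse, $F_{k}\text{End}(\lm)$ equals the sum of $\ad Y$-eigenspaces of weights $\le k$; decomposing $X=\sum X_{j}$ with $X_{j}\in\lm(j)$, the $\ad X_{j}$ lie in distinct $\ad Y$-eigenspaces of $\text{End}(\lm)$, so $\ad X\in F_{k}\text{End}(\lm)$ forces $\ad X_{j}=0$ for $j>k$; by injectivity of $\ad$, $X_{j}=0$ for $j>k$, i.e. $X\in W(N)_{k}\lm$.

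For (ii), if $g\in Z_{M}(N)$ then $\Ad(g).N=N$, so $\Ad(g)$ commutes with $\ad N$. Therefore $\Ad(g).W(N)_{\bullet}$ is again an increasing filtration of $\lm$ satisfying $\ad N\cdot\Ad(g)W(N)_{i}\subseteq\Ad(g)W(N)_{i-2}$ together with the $(\ad N)^{k}:Gr_{k}\overset{\sim}{\to}Gr_{-k}$ isomorphism on graded pieces; by uniqueness of the monodromy weight filtration centered at $0$, $\Ad(g)W(N)_{\bullet}=W(N)_{\bullet}$, so $(\Ad g-I)W(N)_{i}\subseteq W(N)_{i}$ and $g\in W_{0}^{N}M$.

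For (iii), I match Lie algebras and then exponentiate. Decomposing $\lm$ into irreducible $\sl$-summands $V_{c}$ of highest weight $c$, $\ker(\ad N)\cap V_{c}$ is the lowest-weight line $V_{c}(-c)$ and $\text{im}(\ad N)\cap V_{c}=\bigoplus_{j\le c-2}V_{c}(j)$, so $\lm_{N}$ is the sum of lowest-weight lines in non-trivial summands ($c\ge 1$), which lies in $W(N)_{-1}\lm$; hence $\lm_{N}=\ker(\ad N)\cap W(N)_{-1}\lm=Lie(Z_{M}(N))\cap Lie(W_{-1}^{N}M)$, the last equality by (i) and the obvious $Lie(Z_{M}(N))=\ker(\ad N)$. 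Elements of $\lm_{N}$ are $\ad Y$-negative and hence $\ad$-nilpotent, so $M_{N}=\exp(\lm_{N})$ is the unipotent connected algebraic subgroup with Lie algebra $\lm_{N}$; under the paper's disconnectedness convention this identifies it with $Z_{M}(N)\cap W_{-1}^{N}M$. The main obstacle is the reverse inclusion in (i), which genuinely requires both the $\ad Y$-grading of $\text{End}(\lm)$ and the adjointness of $M$; everything else follows from elementary $\sl$-representation theory and the uniqueness of the weight filtration.
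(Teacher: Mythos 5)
Your proof is correct, and for parts (i) and (iii) it travels essentially the paper's route: a Jacobson--Morozov triple $(Y,N_{+},N)$ (the paper takes a rational one), the eigenspace description $W(N)_{k}\lm=\oplus_{j\leq k}E(j)$, the linearization $Lie(W_{k}^{N}M)=\{X\in\lm\,:\,(\ad X)W(N)_{i}\subseteq W(N)_{i+k}\ \forall i\}$, and for (iii) the observation that $\ker(\ad N)\cap\text{im}(\ad N)$ is spanned by the lowest-weight vectors of the nontrivial $\sl$-isotypic pieces, hence equals $\lz(N)\cap W(N)_{-1}\lm$. Two genuine differences are worth recording. In the reverse inclusion of (i), the paper eliminates high-weight components of an element by bracketing against the single test vector $Y$, which only detects components of nonzero $\ad Y$-weight; you instead grade $End(\lm)$ by $\ad Y$ and invoke injectivity of $\ad$ (i.e.\ semisimplicity of $\lm$, $M$ being adjoint), which also kills weight-zero components --- the case relevant for $k<0$ that the paper's ``$w=Y$'' does not literally cover --- so your version is slightly more complete. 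In (ii) your argument is different in kind: the paper stays at the Lie-algebra level, showing $\lz(N)=\oplus_{\ell}\lm(\ell)\cap E(-\ell)\subseteq W(N)_{0}\lm$ (enough under its connected-component conventions), whereas you argue on the group itself, using that $\Ad(g)$ commutes with $\ad N$ and the uniqueness of the monodromy weight filtration to get $\Ad(g)W(N)_{\bullet}=W(N)_{\bullet}$; this buys the statement for all of $Z_{M}(N)$, non-identity components included, at the cost of invoking the characterization of $W(N)_{\bullet}$ rather than the explicit $\sl$-decomposition (which the paper reuses immediately in (iii)). Your closing remarks on (iii) --- nilpotence of $\lm_{N}$, exponentiation, and the connectedness conventions identifying $M_{N}$ with $Z_{M}(N)\cap W_{-1}^{N}M$ --- are consistent with the paper's treatment.
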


Henceforth, we drop subscript $M$'s except where needed for clarity.

\begin{defn}
\label{def22}Let $(V,F^{\bullet},W_{\bullet})$ be a MHS ($V$ a $\mathbb{Q}$-vector space).

(i) \cite{De4,CKS} The \emph{Deligne bigrading} $I^{p,q}(V_{\CC})$
associated to this MHS is the unique bigrading of $V_{\CC}$ such
that $\oplus_{p+q\leq n}I^{p,q}=W_{n}$, $\oplus_{p\geq p_{0};q}I^{p,q}=F^{p_{0}}$,
and $\overline{I^{q_{0},p_{0}}}\equiv I^{p_{0},q_{0}}$ mod $\oplus_{p<p_{0};q<q_{0}}I^{p,q}$.
(One has $\overline{I^{q,p}}=I^{p,q}$ $(\forall p,q)$ $\iff$ $(V,F,W)$
is $\RR$-split.) It is compatible with tensors and duals in the obvious
way.

(ii) The MHS's \emph{Hodge tensors} are the elements of $\oplus_p I^{p,p}(T(V_{\mathbb{C}}))\cap T(V)$.

\end{defn}

\begin{rem}
\label{rem ?}(a) By the construction in $\S1$ of Hodge
structures on $\lm\subset End(V)$ (from HS on $V$), it is clear that the Lie bracket
$[\, , \,]:\,\mathfrak{m}\times \mathfrak{m}\to \mathfrak{m}$ is a Hodge tensor
of type $(0,0)$ everywhere on $D$.  In view of the compatibility
of taking limit MHS with tensor and dual operations, it is clear that it remains
so on $\tilde{B}(N)$; that is,
\[
(\ad I^{p,q}(\lm_{\CC}))I^{p',q'}(\lm_{\CC})\subset I^{p+p',q+q'}(\lm_{\CC}).
\]
Though \emph{(i)} in Lemma \ref{lem21}
is clear from this, we shall also give a more instructive proof in
$\S3$.

(b) Hodge tensors in a nilpotent orbit%
\footnote{or, more generally, in a polarized VHS, cf. \cite[I.C.1]{GGK}%
} remain Hodge in the limit; in particular, the polarizing form $B\in(\lm^{\vee})^{\otimes2}$
gives perfect pairings
\[
\left\{ \begin{array}{c}
Gr_{k}^{W(N)}\lm\times Gr_{-k}^{W(N)}\lm\to\QQ,\\
I^{p,q}(\lm_{\CC})\times I^{-p,-q}(\lm_{\CC})\to\CC.
\end{array}\right.
\]

(c) Furthermore, it is well-known (cf. \cite{Ca2}) that the formulas
\[
Q_{k}(v,w):=B(v,N^{k}w)
\]
yield polarizations on the primitive subspaces $P_k :=\ker(N^{k+1})\subseteq 
Gr_{k}^{W(N)}\lm$ for $k\geq0$. These naturally extend to polarizations
$B_k$ on $Gr^{W(N)}_k \mathfrak{m} = \oplus_{j\geq 0} N^j (P_{k+2j}).$
(If $k<0$, one defines $B_{k}$ via $B_{-k}$ and the Hard Lefschetz isomorphism
$N^{k}:Gr_{-k}^{W(N)}\lm\overset{\cong}{\to}Gr_{k}^{W(N)}\lm$.)  Conversely,
$(V,F^{\bullet},W_{\bullet})$ is \emph{polarized by} $N\in\mathfrak{m}$ if $W_{\bullet}
=W(N)_{\bullet}$, $N(F^{\bullet})\subseteq F^{\bullet -1}$, and $Q_k$ polarizes $P_k$
($\forall k\geq 0$).
\end{rem}

Next, we recall some material from \cite{An} and \cite{GGK}. 
\begin{defn}\label{def24}
Again let $(V,F^{\bullet},W_{\bullet})$ be a MHS.

(i) \cite[I.C.3]{GGK} Define $\tilde{\vf}:\mathbb{S}(\CC)\,(\cong\CC^{*}\times\CC^{*})\to Aut(V_{\CC})$
by $\CC$-linear extension of the rule ``$\tilde{\vf}(z,w)|_{I^{p,q}}=$
multiplication by $z^{p}w^{q}$''.

(ii) \cite{An} The \emph{Mumford-Tate group} $M_{(F,W)}$ {[}resp.
\emph{full MTG} $\tilde{M}_{(F,W)}${]} \emph{of $(V,F^{\bullet},W_{\bullet})$}
is the largest $\QQ$-algebraic subgroup of $GL(V)$ fixing {[}resp.
scaling{]} all Hodge tensors.
\end{defn}
We remind the reader that for $(F^{\bullet},W_{\bullet})\in\tilde{B}(N)$,
$N$ belongs to $I^{-1,-1}(\lm_{\CC})$ (cf. Remark \ref{rem ?}(a)). To generalize (ii) to a
\emph{set} of MHS, require that the group fix {[}resp. scale{]} Hodge
tensors common to the whole set.
\begin{lem}
\label{lemma2}(a) \cite[I.C.6]{GGK} In the situation of Definition
\ref{def24}, the $\QQ$-closure of the image of $\tilde{\vf}$ is $\tilde{M}_{(F,W)}$.
(There is no corresponding result for $M_{(F,W)}$.)

(b) For a \emph{set} of MHS, the full MTG equals the $\QQ$-closure
of the corresponding set of $\tilde{\vf}(\mathbb{S}(\mathbb{C}))$'s.

(c) For any $F^{\bullet}\in \tilde{B}(N)$, $M_{(F,W(N))}\leq M$.
\end{lem}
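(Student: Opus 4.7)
The plan is to deduce part (a) from \cite[I.C.6]{GGK} and then observe that its proof only uses formal properties of the construction of $\vft$, so that it extends verbatim to a whole family of MHS and yields part (b). The underlying mechanism throughout is a Chevalley-type fact: a $\QQ$-algebraic subgroup $H \leq GL(V)$ is cut out by the collection of $\QQ$-lines in the tensor algebra $T^{\bullet,\bullet}V := \bigoplus_{a,b} V^{\otimes a}\otimes (V^\vee)^{\otimes b}$ that it scales, and the $\QQ$-closure of any subset $S \subset GL(V_{\CC})$ is precisely the $\QQ$-subgroup scaling exactly the $\QQ$-lines scaled by $S$.

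For the containment $\overline{\vft(\mathbb{S}(\CC))}^{\QQ} \subseteq \tilde{M}_{(F,W)}$ in (a), the key point is that $\vft(z,w)$ acts on $I^{p,q}(V_{\CC})$ by the monomial $z^{p}w^{q}$, and since Deligne bigradings are compatible with tensors and duals, it acts on each $I^{p,q}(T^{a,b}V_{\CC})$ by the same monomial. Hodge tensors are by definition $\QQ$-lines lying in a single $I^{p,p}(T^{a,b})$; each such line is therefore scaled by $\vft(\mathbb{S}(\CC))$, and since ``scaling a given $\QQ$-line'' is a $\QQ$-algebraic condition, the $\QQ$-closure also scales every Hodge tensor and sits inside $\tilde{M}_{(F,W)}$. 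The reverse inclusion is Chevalley in the other direction: the $\QQ$-closure scales precisely the $\QQ$-lines in $T^{\bullet,\bullet}V$ that $\vft$ scales, and these coincide with the Hodge tensors, so the $\QQ$-closure is the maximal $\QQ$-subgroup with this scaling property — hence contains (and equals) $\tilde{M}_{(F,W)}$.

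Part (b) requires no new ideas: replace the single $\vft$ by the collection $\{\vft_{\alpha}\}$ indexed by the MHS in the given set. A Hodge tensor for the set is exactly a $\QQ$-line scaled by every $\vft_{\alpha}$, equivalently by the union of their images, and Chevalley identifies the $\QQ$-closure of that union with the maximal $\QQ$-subgroup scaling those lines. The main obstacle I anticipate is verifying, at the precision needed for the monomial action, that the induced Deligne bigrading on $T^{a,b}V$ is the tensor bigrading of the $I^{p,q}$'s of $V$ — this is where the tensor-compatibility clause of Definition~\ref{def23}(i) is essential — and checking that ``scaled $\QQ$-lines'' produced by the complex-Lie-group image $\vft(\mathbb{S}(\CC))$ really constitute the defining data of a $\QQ$-algebraic group, which is the tannakian content of Chevalley's theorem applied in this setting. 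The parenthetical warning that there is no analogue for $M_{(F,W)}$ reflects the corresponding failure on the ``fixed tensors'' side: $\vft$ fixes only the pure $(0,0)$-tensors, so its $\QQ$-closure can be strictly larger than the $\QQ$-subgroup fixing those, and one genuinely needs the full $\mathbb{S}$ and scaling (rather than fixing) to recover a closure statement.
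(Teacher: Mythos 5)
Your argument is correct and is essentially the standard Chevalley/tensor-compatibility argument underlying the result, which the paper itself does not spell out: for part (a) it simply cites \cite[I.C.6]{GGK}, and part (b) is stated as the routine extension to a set of MHS, exactly as you present it. The one point needing care --- that a rational line scaled by $\tilde{\vf}$ is automatically spanned by a Hodge tensor, i.e.\ that rationality together with the condition $\overline{I^{q,p}}\equiv I^{p,q}$ modulo lower terms forces type $(p,p)$, and that the Chevalley line for the $\QQ$-closure can be handled componentwise in a direct sum of tensor spaces --- is precisely what the cited proof in \cite{GGK} takes care of, so your reconstruction introduces no gap.
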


\begin{proof}
(b) follows from (a) and Chevalley's theorem.  For (c),
any tensor fixed by $M$ is Hodge for $\Phi_{F^{\bullet}}$, 
hence remains so in the limit.
\end{proof}

As above, one defines a weight filtration $W_{\bullet}$ on $M_{(F,W)}$.
The following properties are standard (cf. \cite{A-K},\cite{An}):
\begin{enumerate}
\item $W_{0}M_{(F,W)}=M_{(F,W)}$;
\item $W_{-1}M_{(F,W)}$ is its unipotent radical;
\item Writing $M_{\text{split}}$ for the (reductive) MTG of the Hodge structure
$\oplus_{i}(Gr_{i}^{W}V,Gr_{i}^{W}F^{\bullet})$, one has a split
short-exact sequence
\[
0\to W_{-1}M_{(F,W)}\to M_{(F,W)}\to M_{\text{split}}\to0.
\]
This also holds with the two right-hand groups replaced by the respective
full Mumford-Tate groups. 
\end{enumerate}
We denote the representation of $\mathbb{U}(\CC)$ {[}resp. $\mathbb{S}(\CC)${]}
corresponding to $\oplus_{i}\left(Gr_{i}^{W}V,Gr_{i}^{W}F^{\bullet}\right)$
by $\varphi_{\text{split}}$ {[}resp. $\tilde{\varphi}_{\text{split}}${]}.

Now for the central object of study:
\begin{defn}
The \emph{Mumford-Tate group $M_{B(N)}$ of the boundary component
$B(N)$} is the largest algebraic subgroup of $GL(\mathfrak{m})$ fixing the Hodge
tensors common to all $(F^{\bullet},W(N)_{\bullet})\in\tilde{B}(N)$.
(That is, it is the MTG of $\tilde{B}(N)$ viewed as a set of MHS.
Similarly one defines $\tilde{M}_{B(N)}$, $M_{B^{\RR}(N)}$, $\tilde{M}_{B^{\RR}(N)}$.)
\end{defn}
By Lemma \ref{lemma2}(c), we have $M_{B(N)}\leq M$. 
Since $M_{B(N)}$ is the MTG of a set of LMHS having $N$ as a Hodge
tensor, it is contained in $Z(N)$; likewise, $\tilde{M}_{B(N)}$
is contained in 
\[
\tilde{Z}(N):=\left\{ g\in M\left|\Ad(g)\langle N\rangle\subseteq\langle N\rangle\right.\right\} .
\]
So for any (limit) MHS $\tilde{\vf}\in\tilde{B}(N)$ we have a diagram
\begin{equation} \label{eqn*1}\xymatrix{
\mathbb{S}  \ar [r]^{\tilde{\vf}\mspace{30mu}} \ar @{=} [d] & \tilde{Z}(N) \ar @{^(->} [r]^{\Ad\mspace{50mu}} \ar @{->>} [d]  & Aut(\lm,B,W(N)_{\bullet}) \ar @{->>} [d]
\\
\mathbb{S} \ar [r]^{\tilde{\vf}_{\text{split}}\mspace{50mu}} & \tilde{Z}(N)/M_N \ar @{^(->} [r]^{\overline{\Ad}\mspace{50mu}} & \times_i GL(Gr^{W(N)}_i\lm )
\\
\UU \ar @{^(->} [u] \ar [r]^{\vf_{\text{split}}\mspace{50mu}} & Z(N)/M_N \ar @{^(->} [u] \ar @{^(->} [r]^{\overline{\Ad}\mspace{60mu}} & \times_i Aut(Gr^{W(N)}_i\lm,B_i). \ar @{^(->} [u]
}\end{equation} It will be convenient to write $Gr_{i}^{W(N)}\vf$
for the composition of $\overline{\Ad}\circ\vf_{\text{split}}$ with
the projection to $Aut(Gr_{i}^{W(N)}\lm,B_{i})$.%
\footnote{In \cite{GGK}, $\varphi$ denotes the restriction of $\tilde{\varphi}$
to $\mathbb{U}(\mathbb{C})\cong\mathbb{C}^{*}$ (i.e. $\vf(z):=\vft(z,z^{-1})$).
Since the $\QQ$-closure of its image is not $M_{(F,W)}$ in the mixed
case, we will use only ``$Gr_{i}^{W(N)}\vf$'', and not ``$\vf$''
as such, in this paper.%
}
\begin{rem}
\label{rem ??}The three groups

(a) $Aut(Gr_{0}^{W(N)}\lm,B_{0})\times Aut(Gr_{-1}^{W(N)}\mathfrak{m},B_{-1})$,

(b) $\times_{k\geq0}Aut(P_{k},Q_{k})$, and

(c) $\times_{k\geq0}Aut(Gr_{-k}^{W(N)}\lz(N),B_{-k}|_{\cdots})$\\
are embedded ``diagonally'' in $\times_{i}Aut(Gr_{i}^{W(N)}\lm,B_{i})$,
using the maps
\[
\left\{ \begin{array}{c}
Gr_{j}^{W(N)}\lm\overset{N^{j/2}}{\hookrightarrow}Gr_{0}^{W(N)}\lm\overset{N^{j/2}}{\twoheadrightarrow}Gr_{-j}^{W(N)}\lm\;\;(j\text{ even})\\
\\
Gr_{j}^{W(N)}\lm\overset{N^{(j+1)/2}}{\hookrightarrow}Gr_{-1}^{W(N)}\lm\overset{N^{(j-1)/2}}{\twoheadrightarrow}Gr_{-j}^{W(N)}\lm\;\;(j\text{ odd})
\end{array}\right.
\]
and the isomorphisms
\[
\left\{ \begin{array}{c}
Gr_{j}^{W(N)}\lm\cong\oplus_{l\geq\max\{0,-j\}}N^{\ell}P_{j+2\ell},\\
\\
P_{k}\overset{\cong}{\underset{N^{k}}{\longrightarrow}}Gr_{-k}^{W(N)}\lz(N).
\end{array}\right.
\]
Since $Z(N)$ commutes with $N$, one easily verifies that the bottom
$\overline{\Ad}$ in \eqref{eqn*1} factors through each of (a)-(c).
We conclude that the groups
\[
Gr_{0}^{W}Z(N)=Z(N)/M_{N}\,,\;\;\;\; Gr_{0}^{W}\tilde{Z}(N)=\tilde{Z}(N)/M_{N}
\]
act faithfully on the vector spaces

(a') $Gr_{0}^{W(N)}\lm\oplus Gr_{-1}^{W(N)}\lm$,

(b') $\oplus_{k\geq0}P_{k}$, and (perhaps most naturally)

(c') $\oplus_{k\geq0}Gr_{-k}^{W(N)}\lz(N)$.

This reveals that the Hodge structure \emph{$\vf_{\text{split}}$
is completely determined by its restriction to these spaces}.
\end{rem}
An important related point, which establishes $Gr_{0}^{W}Z(N)$ as
a possible Mumford-Tate group of $\vf_{\text{split}}$, is the
\begin{prop}
(i) $Gr_{0}^{W}Z(N)$ and $Gr_{0}^{W}\tilde{Z}(N)$ are reductive;
and\\
(ii) $M_{N}$ is the unipotent radical of $Z(N)$ and $\tilde{Z}(N)$.\end{prop}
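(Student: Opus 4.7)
The plan is to reduce both statements to Kostant's structure theorem for centralizers of nilpotent elements in semisimple Lie algebras, via a Jacobson--Morozov triple. First, using Jacobson--Morozov together with the Schmid refinement quoted above, I would choose a $\QQ$-$\sl$-triple $(Y,N_{+},N)\subset\lm_{\QQ}$ having $N$ as nil-negative element. The classical fact that the monodromy weight filtration of a nilpotent endomorphism is recovered from the semisimple element of any completing $\sl$-triple gives
\[
W(N)_{k}\lm\;=\;\bigoplus_{j\leq k}\lm(j),
\]
where $\lm(j)$ is the $\ad Y$-eigenspace of eigenvalue $j$. Decomposing $\lm$ into $\sl$-irreducibles under $\ad$ and counting lowest-weight vectors in each summand then yields
\[
\lz(N)\;=\;\lm^{\sl}\,\oplus\,\lm_{N},
\]
where $\lm^{\sl}:=\ker(\ad Y)\cap\ker(\ad N)\cap\ker(\ad N_{+})$ is the centralizer of the full triple (the sum of the trivial $\sl$-summands) and $\lm_{N}$ is the sum of the lowest-weight lines in the non-trivial summands; this is compatible with $\lm^{\sl}\subset W(N)_{0}\lm$ and $\lm_{N}\subset W(N)_{-1}\lm$ as required by Lemma \ref{lem21}.

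To establish (ii) I would first check that $M_{N}$ is unipotent: every $X\in\lm_{N}\subset W(N)_{-1}\lm$ has $\ad X$ strictly lowering the bounded filtration $W(N)_{\bullet}$, hence $\ad X$ is nilpotent; since $\lm$ is semisimple ($M$ being adjoint), $X$ is itself nilpotent, so the connected group $M_{N}$ is unipotent. Normality of $M_{N}$ in $Z(N)$ reduces to showing $\lm_{N}$ is an ideal of $\lz(N)$: for $X\in\lz(N)$ and $[N,Z]\in\lm_{N}$, the Jacobi identity gives $[X,[N,Z]]=[N,[X,Z]]\in\text{im}(\ad N)$, while $\lz(N)=\ker(\ad N)$ is itself a Lie subalgebra, so $[X,[N,Z]]\in\ker(\ad N)$ as well; hence $[X,[N,Z]]\in\lm_{N}$.

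For (i), let $\rho:SL_{2}\to M$ integrate $(Y,N_{+},N)$ and set $L:=Z_{M}(\rho(SL_{2}))$, with Lie algebra $\lm^{\sl}$. Since $\rho(SL_{2})$ is a connected reductive subgroup of the reductive group $M$, the centralizer $L$ is reductive. Exponentiating the decomposition $\lz(N)=\lm^{\sl}\oplus\lm_{N}$ yields a Levi splitting $Z(N)=L\ltimes M_{N}$, so $Gr_{0}^{W}Z(N)\cong L$ is reductive and $M_{N}$ is the unipotent radical of $Z(N)$. For $\tilde{Z}(N)$, observe that $\Ad\rho(\exp(tY))\cdot N=e^{-2t}N$, so $\rho$ sends the diagonal torus $T\subset SL_{2}$ into $\tilde{Z}(N)$ but not into $Z(N)$; combined with the embedding $\tilde{Z}(N)/Z(N)\hookrightarrow GL(\langle N\rangle)\cong\mathbb{G}_{m}$, this gives $\tilde{Z}(N)=(L\cdot\rho(T))\ltimes M_{N}$, with reductive Levi $L\cdot\rho(T)$ (a torus extension of $L$), establishing both claims for $\tilde{Z}(N)$ as well.

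The main obstacle is the Kostant-type decomposition $\lz(N)=\lm^{\sl}\oplus\lm_{N}$ together with the ideal property of $\lm_{N}$; these rest on the $\sl$-representation theory of $\lm$ and the identification of $W(N)_{\bullet}\lm$ with the $\ad Y$-filtration, after which the rest of the argument is essentially formal. A secondary concern is the $\QQ$-rationality of the Levi decomposition, but this is taken care of by Schmid's refinement, which ensures the $\sl$-triple -- and hence both $\lm^{\sl}$ and $\lm_{N}$ -- is defined over $\QQ$.
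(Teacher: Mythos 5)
Your argument is correct, but it follows a genuinely different route from the paper's. You work purely Lie-theoretically: a Jacobson--Morozov/Schmid $\QQ$-$\sl$-triple $(Y,N_{+},N)$, the Kostant decomposition $\lz(N)=\lm^{\sl}\oplus\lm_{N}$ with $\lm^{\sl}$ the centralizer of the whole triple, reductivity of the centralizer of the reductive subgroup $\rho(SL_{2})\subset M$, and the resulting Levi splitting $Z(N)=Z_{M}(\rho)\ltimes M_{N}$, with the torus $\rho(T)$ (which scales $N$ by $e^{-2t}$) adjoined to handle $\tilde{Z}(N)$. The paper instead argues Hodge-theoretically: it picks $\vft\in\tilde{B}(N)$, forms the Weil operator $C=\vft(i,-i)$, uses the faithful block action of $Z(N)/M_{N}$ on $\oplus_{\ell\geq0}P_{\ell}$ together with the polarizations $Q_{\ell}$ to exhibit a \emph{compact real form} $\mathcal{G}$ of $Z(N)/M_{N}$, and concludes reductivity by Deligne's criterion, with (ii) following at once. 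Each approach has its advantages. Yours requires no Hodge theory and, notably, no hypothesis that $\tilde{B}(N)\neq\emptyset$; it also produces an explicit $\QQ$-rational Levi complement -- your $\lm^{\sl}=\ker(\ad N)\cap\ker(\ad Y)$ is exactly the $\lg_{N}$ of Definition \ref{def a}, so your splitting anticipates the semidirect product decompositions of Proposition \ref{prop a}. The paper's proof, by contrast, is shorter given the machinery already assembled in Remark \ref{rem ??}, and the compact real form it constructs is tied to the polarized Hodge structures on the $P_{\ell}$, which is precisely the data reused in analyzing $\vf_{\text{split}}$ and $M_{B(N)}$ afterwards; it also treats $Z(N)$ and $\tilde{Z}(N)$ in one stroke. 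Two small points you should make explicit if you write this up: the group-level triviality of $(L\cdot\rho(T))\cap M_{N}$ (an easy weight argument with $\ad Y$, or a citation of the standard structure theory of centralizers of nilpotents), and the fact that all statements are made modulo the paper's standing disregard of component-group issues, since $Z(N)$ need not be connected while your Levi splitting is constructed on identity components.
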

\begin{proof}
Given $\vft\in\tilde{B}(N)$, set $C:=\vft(i,-i)\in Z(N)(\CC)$. We
may consider $\Psi_{C}$ (conjugation by $C$) as an automorphism
of $Z(N)/M_{N}$. Since $Z(N)/M_{N}$ acts faithfully in blocks on
$\oplus_{\ell\geq0}P_{\ell}$ (or $\oplus_{\ell\geq0}Gr_{-\ell}^{W}\lz(N)$),
and $C^{2}=\vft(-1,-1)$ acts in scalar blocks on $\oplus_{\ell}P_{\ell}$,
they commute. Hence $\Psi_{C}$ is an involution. Considered as Hodge
structures, the $P_{\ell}$ are polarized by $Q_{\ell}$; in particular,
the Hermitian forms defined by $Q_{\ell}(Cv,\bar{w})$ ($v,w\in P_{\ell}$)
are positive-definite. Since the real form
\[
\mathcal{G}:=\left\{ g\in\left(Z(N)/M_{N}\right)(\CC)\left|\Psi_{C}(\bar{g})=g\right.\right\} 
\]
of $Z(N)/M_{N}$ evidently preserves these forms, it is compact. We
conclude (by the criterion in the proof of Prop. 3.6 in \cite{De})
that $Gr_{0}^{W}Z(N)$ is reductive, and (ii) follows from this at
once.
\end{proof}
We are now ready to state the main result in the rank-one case:
\begin{thm}
\label{thm1}Assume $B(N)\neq\emptyset$. Then we have $M_{N}\subseteq M_{B(N)}\subseteq Z(N)$.
More precisely,\\
(A) $W_{-1}M_{B(N)}=M_{N}$; and\\
(B) $Gr_{0}^{W}M_{B(N)}\subseteq Gr_{0}^{W}Z(N)$ is the Mumford-Tate
group of the set $Gr_{0}^{W}Z(N)(\RR).\vf_{\text{split}}$ of Hodge
structures on (b') {[}resp. (a'),(c'){]} above, or equivalently the
$\QQ$-algebraic group closure of a generic $\vf_{\text{split}}$.
\end{thm}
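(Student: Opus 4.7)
The proof splits along the semi-direct decomposition $M_{B(N)} = W_{-1}M_{B(N)} \rtimes Gr_0^W M_{B(N)}$ coming from property (3) preceding Lemma \ref{lemma2}, applied to $\tilde{B}(N)$ as a set of MHS sharing weight filtration $W(N)_\bullet$: statements (A) and (B) handle the unipotent radical and reductive quotient, respectively. I first establish $M_{B(N)} \subseteq Z(N)$ by Tate-twisting $N$ into a common Hodge tensor lying in $I^{0,0}$ of $\lm(-1)$ for every LMHS in $\tilde{B}(N)$; since $M$ acts trivially on Tate twists, fixing this tensor is equivalent to $\Ad(g)N = N$.

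For part (A), the containment $W_{-1}M_{B(N)} \subseteq M_N$ follows immediately from $M_{B(N)} \subseteq Z(N)$ combined with Lemma \ref{lem21}(iii). The reverse reduces to showing $M_N \subseteq M_{B(N)}$. The key observation is that $N \in I^{-1,-1}(\lm_\CC)$ forces $\text{im}(\ad N) \subseteq W(N)_{-2}\lm$, whence $\lm_N \subseteq W(N)_{-2}\lm$ and $(g - \text{id})$ shifts the induced weight filtration on any tensor space by at most $-2$ for every $g = \exp(\gamma) \in M_N$. Now fix a common Hodge tensor $\tau$ and take $g \in M_N(\RR)$. Since $g \in M(\RR)$ commutes with $N$, it preserves $D$ and sends nilpotent orbits to nilpotent orbits, so $g \cdot \tilde{B}(N) \subseteq \tilde{B}(N)$ and $g\tau$ is again a common Hodge tensor. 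Both $\tau$ and $g\tau$ therefore lie in $I^{0,0}$ of the tensor MHS at any fixed $F^\bullet \in \tilde{B}(N)$, while $g\tau - \tau \in W_{-2}$ by the weight shift. Since $W_{-2} \cap I^{0,0} = 0$ in any Deligne bigrading, $g\tau = \tau$ for all $g \in M_N(\RR)$, and this extends to $M_N(\CC)$ by Zariski-density of the real points in the connected unipotent $\QQ$-algebraic group $M_N$. I expect the main obstacle to be this last argument---correctly interleaving (i) $M_N(\RR)$-invariance of $\tilde{B}(N)$, (ii) the weight shift $\lm_N \subseteq W_{-2}\lm$, (iii) the bigrading disjointness $I^{0,0} \cap W_{-2} = 0$, and (iv) the $\RR$-to-$\CC$ density step; each ingredient is light, but the order matters.

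For part (B), property (3) identifies $Gr_0^W M_{B(N)}$ with the MTG $M_{\text{split}}$ of the set of associated-graded Hodge structures $\{(Gr^W\lm, Gr^W F^\bullet)\}_{F^\bullet \in \tilde{B}(N)}$. By Remark \ref{rem ??}, since $Gr_0^W Z(N) = Z(N)/M_N$ acts faithfully on each of the spaces (a'), (b'), (c'), the set of $\vf_{\text{split},F}$ as $F^\bullet$ ranges over $\tilde{B}(N)$ is precisely the $Gr_0^W Z(N)(\RR)$-orbit of a chosen base point $\vf_{\text{split}}$, and the three descriptions are interchangeable. Finally, the MTG of this orbit equals the $\QQ$-algebraic closure of a generic $\vf_{\text{split}}$, by the standard fact that Mumford-Tate groups drop only on a countable union of proper analytic subvarieties of any family.
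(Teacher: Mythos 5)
Your outline of part (A) is essentially sound, but it rests on a false intermediate claim: $N\in I^{-1,-1}(\lm_{\CC})$ does \emph{not} force $\text{im}(\ad N)\subseteq W(N)_{-2}\lm$ (the operator $\ad N$ lowers the filtration by two, so its image only lies in $W(N)_{n-2}\lm$; e.g.\ $(\ad N)N_{+}=-Y$ has weight $0$), and $\lm_{N}\subseteq W(N)_{-2}\lm$ fails whenever $Gr_{-1}^{W(N)}\lz(N)\neq\{0\}$, as already happens for $B(N_{1})$ in Example 8.1. What is true, and suffices, is Lemma \ref{lem21}(iii): $\lm_{N}\subseteq W(N)_{-1}\lm$. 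Also, a common Hodge tensor lies in $F^{0}\cap W_{0}$, not in $I^{0,0}$ in general, so the punchline should be: $g\tau-\tau$ is a \emph{real} element of $F^{0}\cap W_{-1}$ at a point of $\tilde{B}(N)$, hence vanishes in each $Gr_{m}^{W}$, $m\leq-1$, by purity of negative weight, hence is zero. With these repairs your argument for $M_{N}\subseteq M_{B(N)}\subseteq Z(N)$ and (A) works, and is in fact a more direct route than the paper's, which obtains (A) as a by-product of its Lie-algebra computation.

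The genuine gap is in (B). The claim that $\{\vf_{\text{split},F}\}_{F^{\bullet}\in\tilde{B}(N)}$ is precisely the single orbit $Gr_{0}^{W}Z(N)(\RR).\vf_{\text{split}}$ does not follow from Remark \ref{rem ??}: faithfulness of the $Gr_{0}^{W}Z(N)$-action on (a'), (b'), (c') says nothing about transitivity. One inclusion is easy (lift $\bar{g}$ to $Z(N)(\RR)$, which preserves $\tilde{B}(N)$); the reverse inclusion is the heart of the theorem and occupies the paper's \S\S 3--4. It requires (i) Lemma \ref{lemma4}, that $Z(N)(\RR)$ acts transitively on $\tilde{B}^{\RR}(N)$, proved by the tangent-space computation identifying $T_{p}\tilde{B}(N)$ with $\lz(N)_{\CC}/F^{0}\lz(N)_{\CC}$ via $\lp\cap(\ad N)\lq=\{0\}$; and (ii) the reduction of an arbitrary $\vft'\in\tilde{B}(N)$ to an $\RR$-split point by Deligne's $\delta$ together with the Cattani--Kaplan converse theorem, which shows $e^{-i\delta}.\vft'\in\tilde{B}^{\RR}(N)$ (in particular $\tilde{B}^{\RR}(N)\neq\emptyset$, which your argument tacitly assumes when choosing a base point) and that passing from $\tilde{B}^{\RR}(N)$ to $\tilde{B}(N)$ only enlarges by $M_{N}(\CC)$, which acts trivially on $Gr^{W}$. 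Separately, your appeal to property (3) for the \emph{set} $\tilde{B}(N)$, identifying $Gr_{0}^{W}M_{B(N)}$ with the MTG of the set of graded Hodge structures, is unproved; Lemma \ref{lemma2}(a) warns that the closure statement fails for the non-full MTG, so this step needs an argument. The paper sidesteps both issues by fixing a $\QQ$-split base point (Lemma \ref{lemma3}), writing $\vft$ explicitly through $Y$ and $\phi$, and taking the $\QQ$-Lie-algebra closure of the $\ad\lz(N)_{\RR}$-conjugates of $\langle Y,\phi\rangle$, which yields (A), (B) and the rational semidirect-product structure of Proposition \ref{prop a} simultaneously; some version of that work (or of (i)--(ii) above) must be supplied before your (B) can stand.
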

A similar statement holds for $\tilde{M}_{B(N)}$.

\section{Some lemmas}\label{sec3}
\begin{defn}
Given a MHS $(V,F^{\bullet},W_{\bullet})$, we define its \emph{Deligne
splitting} (a splitting of $W_{\bullet}$) to be the element $Y\in End(V_{\CC})$
whose $k$-eigenspace is $\oplus_{p+q=k}I^{p,q}(V_{\CC})$ for each $k$.
Note that $Y\in I^{0,0}$.  The MHS is \emph{$\mathbb{Q}$-split} if it is
a direct sum of pure HS, or equivalently if $Y\in End(V)$.
\end{defn}
We reiterate that the mixed Hodge structures parametrized by boundary
components $\tilde{B}(N)$ are limits of \emph{weight zero} $N$-nilpotent
orbits with $V=\mathfrak{m}$, $W_{\bullet}=W(N)_{\bullet}$. Hence
the eigenvalues of $Y$ are automatically centered about $0$.

We shall work for now under the assumption that $\tilde{B}^{\RR}(N)\neq\emptyset$.
For simplicity, the notation $\Ad$ for $M$ acting on $\lm$ will
be dropped (except for occasional emphasis); so nilpotent orbits are
now just $e^{\tau N}.F^{\bullet}$.
\begin{lem}
\label{lemma3}$\tilde{B}^{\RR}(N)$ contains a $\QQ$-split MHS $(F^{\bullet},W(N)_{\bullet})$.\end{lem}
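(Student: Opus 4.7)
My strategy is to take the given element $(F_{0}^{\bullet},W(N)_{\bullet})\in\tilde{B}^{\RR}(N)$ (available by the standing assumption) and conjugate it by a suitable $g\in M_{N}(\RR)$ in order to arrange that its Deligne splitting becomes $\QQ$-rational, without leaving $\tilde{B}^{\RR}(N)$.

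First, apply Schmid to $N\in\lm_{\QQ}$ to obtain a $\QQ$-$\sl$-triple $(e_{+}^{\QQ},Y_{\QQ},N)$ with $Y_{\QQ}\in\lm_{\QQ}$. Let $Y_{0}\in\lm_{\RR}$ be the Deligne grading operator of $(F_{0}^{\bullet},W(N)_{\bullet})$. The key non-formal input is that $Y_{0}$ is itself the neutral element of an $\RR$-$\sl$-triple $(N_{0}^{+},Y_{0},N)$---this is the $\sl_{2}$-splitting of an $\RR$-split polarized LMHS in the sense of Cattani--Kaplan--Schmid, in which $N_{0}^{+}$ is built from $N$ and the polarizing form $B$ on each weight-graded piece of $\lm$. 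Kostant~(b) over $\RR$, applied to the two $\RR$-$\sl$-triples $(e_{+}^{\QQ},Y_{\QQ},N)$ and $(N_{0}^{+},Y_{0},N)$ sharing $N$ as nil-negative element, then furnishes $g\in M_{N}(\RR)$ with $\Ad(g).Y_{\QQ}=Y_{0}$.

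Set $F^{\bullet}:=g^{-1}.F_{0}^{\bullet}$. Since $g^{-1}\in M_{N}(\RR)\subset Z(N)$ preserves $W(N)_{\bullet}$ (Lemma \ref{lem21}), the pair $(F^{\bullet},W(N)_{\bullet})$ is again a MHS; its Deligne grading is $\Ad(g^{-1}).Y_{0}=Y_{\QQ}\in\lm_{\QQ}$, which makes the MHS $\QQ$-split. Moreover, since $g\in M_{N}$ commutes with $N$, we have $e^{\tau N}.F^{\bullet}=g^{-1}.(e^{\tau N}.F_{0}^{\bullet})$, which lies in $D$ for $\Im(\tau)\gg 0$ (because $e^{\tau N}.F_{0}^{\bullet}\in D$ and $g^{-1}\in M(\RR)$ preserves the orbit $D$); hence $F^{\bullet}\in\tilde{B}(N)$, and combined with $\QQ$-splitness this gives $F^{\bullet}\in\tilde{B}^{\RR}(N)$. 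The main obstacle is the $\sl_{2}$-splitting step, i.e.\ the existence of an $\RR$-$\sl$-triple around $Y_{0}$ and $N$; this is a standard but non-elementary consequence of the infinitesimal $SL_{2}$-orbit theorem applied to the $\RR$-split PLMHS on $\lm$, after which the argument reduces to bookkeeping via Kostant.
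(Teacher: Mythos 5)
Your argument is correct and is essentially the paper's proof: both rest on the fact that the grading $Y_{0}$ of the $\RR$-split LMHS pairs with $N$ into an $\RR$-$\sl$-triple (you cite the CKS $\sl_2$-splitting; the paper cites $Y_{0}\in\text{im}(\ad N)$ from \cite{CK}(3.3) plus Kostant), and then both use Schmid's rationality together with Kostant's $M_{N}(\RR)$-conjugacy of triples with fixed nil-negative element to move the grading into $\lm_{\QQ}$ and transport the filtration accordingly. Conjugating $Y_{\QQ}$ to $Y_{0}$ and acting by $g^{-1}$ is just the paper's step run in the opposite direction, so the two proofs coincide in substance.
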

\begin{proof}
By assumption, we have a nilpotent orbit $F_{\tau}^{\bullet}\mathfrak{m}_{\mathbb{C}}:=e^{\tau N}.F_{0}^{\bullet}\lm_{\CC}$
with $\RR$-split LMHS $(F_{0}^{\bullet},W(N)_{\bullet})$. The Deligne
splitting of the latter kills its Hodge $(0,0)$ tensors, in particular
any which survive to Hodge tensors in the nilpotent orbit. Hence this
splitting is of the form $\ad Y_{0}$ for some $Y_{0}\in\lm_{\RR}$. Clearly $[Y_0,N]=-2N$.

Furthermore, since the LMHS is $\RR$-split we have $F_{\tau}^{\bullet}\in D$
for $\Im(\tau)>0$, cf. \cite[Lemma 3.12]{CKS}. (Though the result
as stated in {[}op. cit.{]} says \emph{only} that $F_{\tau}^{\bullet}$
is in the period domain containing $D$, our definition of $\tilde{B}^{\RR}(N)$
ensures that $F_{\tau}^{\bullet}\in D$ for $\Im(\tau)\gg0$, which
combined with {[}op. cit.{]} is clearly sufficient.) In particular,
$F_{i}^{\bullet}\in D$. Decomposing $N=N^{(1,-1)}+N^{(0,0)}+N^{(-1,1)}$
with respect to the pure HS on $\mathfrak{m}$ induced by $F_{i}^{\bullet}$,
the Hodge components $N^{(-j,j)}$ therefore belong to $\mathfrak{m}_{\mathbb{C}}$.

By \cite[Thm. 7.5.13]{Ca3}, there exists a Lie algebra homomorphism $\rho:\, \mathfrak{sl}_{2,\mathbb{R}}
\to End(\mathfrak{m}_{\mathbb{R}},B)$ with $\rho(n_-)=N$ and $\rho(y)=Y_0$, which is \emph{Hodge at $t=i$}.
That is, putting $\mathfrak{F}^1 \mathfrak{sl}_{2,\mathbb{C}} := \langle -iy+n_- + n_+ \rangle$, $\mathfrak{F}^0 \mathfrak{sl}_{2,\mathbb{C}} :=\langle -iy+n_- + n_+ , n_+ - n_-\rangle$, and $\mathfrak{F}^{-1}\mathfrak{sl}_{2,\mathbb{C}}=\mathfrak{sl}_{2,\mathbb{C}}$, we have $\rho_{\mathbb{C}}(\mathfrak{F}^{\bullet})\subseteq F^{\bullet}_i$.  In particular, $N^{(0,0)}=\rho_{\mathbb{C}}(n_-^{(0,0)})=\rho_{\mathbb{C}}(\frac{1}{2}(n_- - n_+))$, and so $Y_0 = \rho (y)= \rho(\text{ad}(n_-)(n_- - n_+))=\text{ad}(N)(2N^{(0,0)})$ belongs to the image of $\text{ad}(N)$ \emph{acting on $\mathfrak{m}_{\mathbb{R}}$}.

Taking $e_- = N\,(\in\mathfrak{m}_{\mathbb{Q}})$, $x'=Y\,(\in\mathfrak{m}_{\mathbb{R}})$ in Schmid's result from the end of $\S 1$, \emph{(i)} and \emph{(ii)} there are satisfied, and there exists a $Y\in (Y_0 + \mathfrak{m}_{N,\mathbb{C}})\cap \mathfrak{m}_{\mathbb{Q}} \subseteq Y_0 + \mathfrak{m}_{N,\mathbb{R}}$.  Kostant's result (c) now provides $g\in M_N(\mathbb{R})$ such that $Y=g.Y_0$; and then
\[
g.e^{\tau N}.F_{0}^{\bullet}=e^{\tau N}.g.F_{0}^{\bullet}=:e^{\tau N}.F^{\bullet}
\]
 is a nilpotent orbit with $\ad Y$ the Deligne splitting of its limiting MHS
$(F^{\bullet},W(N)_{\bullet})$.
\end{proof}

Now fix (by Lemma \ref{lemma3} and $\S1$)
\begin{itemize}
\item $\left(F^{\bullet},W(N)_{\bullet}\right)\in\tilde{B}^{\RR}(N)$ $\QQ$-split
(L)MHS,
\item $Y\in\lm_{\QQ}$ associated Deligne grading, and
\item $N_{+}\in\lm_{\QQ}$ such that $(Y,N_{+},N)$ is a $\QQ$-$\sl$-triple.
\end{itemize}
Consider the decomposition
\[
\lm=\oplus_{\ell\in\ZZ_{\geq0}}\lm(\ell)=\oplus_{\ell\in\ZZ_{\geq0}}V(\ell)^{\oplus m_{\ell}}
\]
into isotypical components, where $V(\ell)$ is a copy of the $(\ell+1)$-dimensional
irrep of $\sl$. More precisely, writing $V(\ell)=\text{span}\left\{ v_{-\ell}^{(\ell)},v_{-\ell+2}^{(\ell)},\ldots,v_{\ell}^{(\ell)}\right\} $
we have on each copy
\begin{itemize}
\item $(\ad Y)v_{k}^{(\ell)}=kv_{k}^{(\ell)},$
\item $(\ad N_{+})v_{\ell - 2k}^{(\ell)}=\left\{ \begin{array}{cc}
0, & k=0 \\
(\ell - k+1) v_{\ell-2k+2}^{(\ell)}, & \text{otherwise}
\end{array}\right.,$
\item $(\ad N)v_{\ell-2k}^{(\ell)}=\left\{ \begin{array}{cc}
0, & k=\ell\\
(k+1)v_{\ell-2k-2}^{(\ell)}, & \text{otherwise}
\end{array}\right..$
\end{itemize}
Note that the $\sl$-triple itself is a $V(2)\subset\lm$.

Setting 
\[
E(k):=\left\{ v\in\lm\left|(\ad Y)v=kv\right.\right\} \subset\lm
\]
the definition of $W(N)_{\bullet}$ is
\[
W(N)_{m}\lm:=\oplus_{k\leq m}E(k).
\]
(This is independent of the choice of $Y$, $N_{+}$ though the individual
$E(k)$ are not.)

\begin{proof}[Proof of Lemma 2.1]

(i) Let $v\in E(k)$ be given. For $w\in E(j)$, the Jacobi identity
gives
\[
[Y,[v,w]]=[v,[Y,w]]+[[Y,v],w]=(j+k)[v,w]
\]
\[
\implies(\ad v)w\in E(j+k).
\]
Moreover, there is \emph{some} $j\in\ZZ$ and $w\in E(j)$ such that
$(\ad v)w\neq0$: namely, $j=0$ and $w=Y$. Since
\[
\text{Lie}\left(W_{k}^{N}M\right)=\left\{ \gamma\in\lm\left|(\ad\gamma)W(N)_{i}\subset W(N)_{i+k}\;(\forall i)\right.\right\} ,
\]
the statement follows.

(ii) From the above characterization of isotypical components, any
$\gamma\in\ker(\ad N)$ is a sum of $v_{-\ell}^{(\ell)}$'s. That
is,
\[
\lz(N)=\oplus_{\ell\geq0}\lm(\ell)\cap E(-\ell)\subseteq W(N)_{0}\lm.
\]

(iii) We have 
\[
\begin{array}{ccc}
\lz(N)\cap\text{im}(\ad N) & = & \oplus_{\ell\geq0}\left((\ad N)\lm(\ell)\right)\cap E(-\ell)\\
\\
 & = & \oplus_{\ell\geq1}\lm(\ell)\cap E(-\ell)\;\;\;\;\;\;\;\;\\
\\
 & = & \lz(N)\cap W(N)_{-1}\lm.\;\;\;\;\;\;\;\;
\end{array}
\]

\end{proof} Emphasizing that we are working with a $\QQ$-split MHS,
we turn to
\begin{lem}
The $\lm(\ell)\subset\lm$ (a) are sub-MHS, and (b) admit a further
decomposition
\[
\lm(\ell)_{\CC}=\oplus_{\alpha\in\ZZ}\lm(\ell,\alpha)
\]
where 
\[
\lm(\ell,\alpha):=\lm(\ell)\cap\left\{ \oplus_{p-q=\alpha}I^{p,q}(\lm_{\CC})\right\} 
\]
are themselves sums of copies of $V(\ell)_{\CC}$.\end{lem}
\begin{proof}
For (a), simply write
\[
\lm(\ell)=\bigoplus_{k=0}^{\ell}\left\{ \begin{array}{c}
\ker\{(\ad N)^{k+1}\}\cap\text{im}\{(\ad N_{+})^{k}\}\\
\\
\cap\ker\{(\ad N_{+})^{\ell-k+1}\}\cap\text{im}\{(\ad N)^{\ell-k}\}
\end{array}\right\} 
\]
and note that $\ad N$, $\ad N_{+}$ are morphisms of MHS of respective
types $(-1,-1)$, $(+1,+1)$. (Of course, for $N_{+}$ this is \emph{only}
by virtue of $(F^{\bullet},W(N)_{\bullet})$ being $\QQ$-split.)
To produce (b), one takes the $(\ad N)$-orbit of the Hodge decomposition
of the pure Hodge structure $\lm(\ell)\cap E(\ell)=\lm(\ell)\cap\ker(\ad N_{+}).$
\end{proof}
Consequently we can read off the full MHS on $\lm$ from pure Hodge
structures $\tilde{P}_{\ell}$ of weight $\ell$
\[
\tilde{P}_{\ell}^{\frac{\ell+\alpha}{2},\frac{\ell-\alpha}{2}}:=\lm(\ell,\alpha)\cap E(\ell).
\]
(Note that $E(\ell)\subset W(N)_{\ell}\lm\twoheadrightarrow Gr_{\ell}^{W(N)}\lm$
sends $\tilde{P}_{\ell}\overset{\cong}{\to}P_{\ell}$.) Writing
\[
V(\ell):=\bigoplus_{k=0}^{\ell}\QQ(k)
\]
and endowing $\tilde{P}_{\ell}$ with the trivial $\sl$-representation,
\begin{equation}\label{eqn sl2-repn}\lm \cong \bigoplus_{\ell\geq 0} \tilde{P}_{\ell} \otimes V(\ell)\end{equation}
as MHS and $\sl$-representations.

We now turn to the last result of this section, which will be a key to the proof of Theorem \ref{thm1}:

\begin{lem}
\label{lemma4}$Z(N)(\RR)$ acts transitively on $\tilde{B}^{\RR}(N)$.\end{lem}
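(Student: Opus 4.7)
Plan. I would prove this by factoring the transitivity question along the Levi decomposition of the reductive $\QQ$-algebraic group $Z(N)$. By the proposition immediately preceding Theorem \ref{thm1}, $M_N$ is the unipotent radical of $Z(N)$, so $Z(N)/M_N$ is reductive and $Z(N)(\RR)\twoheadrightarrow(Z(N)/M_N)(\RR)$ is surjective with a Levi section. The strategy is to use the Levi part \emph{first}, to align the graded Hodge structures $\vf_{\text{split}}$ attached to the two given elements of $\tilde B^\RR(N)$, and then use $M_N(\RR)$ (which is trivial on $Gr^{W(N)}\lm$ by Lemma \ref{lem21}(iii), hence preserves the alignment achieved in the first step) to align the Deligne splittings, via the Kostant--Schmid argument already used in Lemma \ref{lemma3}. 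The order matters precisely because $M_N(\RR)$ preserves $\vf_{\text{split}}$ but a generic reductive element need not preserve a chosen Deligne splitting.

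More explicitly: given $F_0, F_1\in \tilde B^\RR(N)$, diagram \eqref{eqn*1} attaches polarized Hodge representations $\vf_{\text{split},i}\colon \UU\to Z(N)/M_N$, acting on $\oplus_k Gr_k^{W(N)}\lm$ with graded polarizations $B_k$ (cf.\ Remark \ref{rem ?}(c)). Both land in the same Hodge domain --- the $(Z(N)/M_N)(\RR)$-orbit of $\vf_{\text{split},0}$ --- using the disconnectedness convention of the introduction and continuity of $F\mapsto \vf_{\text{split},F}$. Pick $\bar g\in (Z(N)/M_N)(\RR)$ with $\bar g.\vf_{\text{split},0}=\vf_{\text{split},1}$, lift to $g\in Z(N)(\RR)$ via the Levi section, and replace $F_0$ by $g.F_0$. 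Now pass to the Deligne splittings $Y_0',Y_1\in\lm_\RR$ of $g.F_0$ and $F_1$: both satisfy $[Y,N]=-2N$ (since $N\in I^{-1,-1}$) and lie in $\text{im}(\ad N)$ by \cite[(3.3)]{CK}, so the Kostant--Schmid argument over $\RR$ of Lemma \ref{lemma3} produces $h\in M_N(\RR)$ with $\Ad(h)Y_0'=Y_1$. Since $M_N\subset W_{-1}^N M$ by Lemma \ref{lem21}(iii), $h$ is trivial on $Gr^{W(N)}\lm$ and thus still sends $\vf_{\text{split},g.F_0}$ to $\vf_{\text{split},1}$. At this point $hg.F_0$ and $F_1$ share both their Deligne splitting and their graded Hodge structure; since an $\RR$-split MHS is uniquely recovered from this pair of data (the bigrading $I^{p,q}$ being the $(p,q)$-component of the pure Hodge structure on the $(p+q)$-eigenspace of $Y$), we conclude $hg.F_0=F_1$, with $hg\in Z(N)(\RR)$, giving transitivity.

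The main obstacle is justifying the first step: that $\vf_{\text{split},0}$ and $\vf_{\text{split},1}$ truly lie in a single $(Z(N)/M_N)(\RR)$-orbit. This requires tracking the polarizations $B_k$ on the graded pieces through the embedding $Z(N)/M_N\hookrightarrow \prod_k \Aut(Gr_k^{W(N)}\lm,B_k)$ (Remark \ref{rem ?}(c)) to identify the relevant Hodge domain, and then invoking its homogeneity together with the connected-component conventions of the introduction and the continuity of $F\mapsto \vf_{\text{split},F}$ on $\tilde B^\RR(N)$. Once this is in hand, the rest of the proof is a straightforward packaging of the Kostant--Schmid argument from Lemma \ref{lemma3} together with the standard reconstruction of an $\RR$-split MHS from its Deligne splitting and its graded pure Hodge structures.
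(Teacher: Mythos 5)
Your steps 2 and 3 are sound: once the two points of $\tilde{B}^{\RR}(N)$ have been arranged to induce the \emph{same} graded Hodge structure, the Kostant--Schmid argument of Lemma \ref{lemma3} (the set of gradings $Y$ with $[Y,N]=-2N$, $Y\in\text{im}(\ad N)$ is a single $M_{N}(\RR)$-orbit) produces $h\in M_{N}(\RR)$ matching the Deligne splittings without disturbing the graded data, and an $\RR$-split MHS is indeed recovered from its splitting $Y$ together with the pure Hodge structures on the $Y$-eigenspaces. The problem is your first step, which you yourself flag as ``the main obstacle'': the claim that the $\vf_{\text{split},F}$, for $F$ ranging over (a connected component of) $\tilde{B}^{\RR}(N)$, all lie in a single $(Z(N)/M_{N})(\RR)$-orbit. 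The justification you sketch -- homogeneity of ``the relevant Hodge domain'' plus continuity of $F\mapsto\vf_{\text{split},F}$ and connectedness -- does not close it. The orbit $(Z(N)/M_{N})(\RR).\vf_{\text{split},0}$ sits inside the product of period domains for the $(Gr_{k}^{W(N)}\lm,B_{k})$ (or the $(P_{k},Q_{k})$) as a subdomain of positive codimension in general, not as an open subset; so a connected set through $\vf_{\text{split},0}$ is in no way forced to stay inside that orbit. To make connectedness do any work you would have to show that at each point the infinitesimal action of $\lz(N)_{\RR}$ (modulo the stabilizer) fills out the tangent space of the image of $\tilde{B}^{\RR}(N)$ -- which is exactly the tangent-space computation the paper performs directly on $\tilde{B}^{\RR}(N)$ itself ($T_{p}\tilde{B}(N)\cong\lz(N)_{\CC}/F^{0}\lz(N)_{\CC}$, and its real refinement), and which you have not supplied in any form.

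There is also a near-circularity to be aware of: in the paper, the statement that the set of all $\vf_{\text{split}}$'s coming from the boundary component is precisely the orbit $Gr_{0}^{W}Z(N)(\RR).\vf_{\text{split}}$ is a \emph{consequence} of Lemma \ref{lemma4} (it is how Theorem \ref{thm1}(B) and the orbit descriptions in $\S$7 are obtained), not an input available beforehand. So your plan assumes, at the graded level, essentially the statement being proved; the honest content of the lemma is concentrated in that step. The fix is to replace your step 1 by the paper's local-transitivity argument: identify $T_{p}\tilde{B}(N)$ with $\lz(N)_{\CC}/F^{0}$, check that tangent vectors to $\tilde{B}^{\RR}(N)$ come from $\lz(N)_{\RR}$ modulo the stabilizer, conclude that $Z(N)(\RR)$-orbits are open in $\tilde{B}^{\RR}(N)$, and finish by connectedness -- at which point your Levi-plus-Kostant bookkeeping becomes unnecessary.
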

\begin{proof}
Clearly $Z(N)(\RR)$ \emph{acts} on $\tilde{B}^{\RR}(N)$, since for
$g\in Z(N)(\RR)$
\[
g.e^{\tau N}.F^{\bullet}=e^{\tau N}.g.F^{\bullet}
\]
 and $ge^{\tau N}F^{\bullet}$ is still in $D$ for $\Im(\tau)\gg0$
(as $g$ is real).

Next we determine the tangent space to $\tilde{B}(N)$ at some point
$x=(F^{\bullet},W(N)_{\bullet})$, in terms of whose Deligne bigrading
we consider the subspaces ($\lm^{p,q}:=I^{p,q}(\lm_{\CC})$)
\[
\lq:=\bigoplus_{\Tiny\begin{array}{c}
p,q\\
p<0
\end{array}}\lm^{p,q}\,,\;\;\;\lp:=\bigoplus_{\Tiny\begin{array}{c}
p,q\\
p\geq-1
\end{array}}\lm^{p,q}
\]
of $\lm_{\CC}$. There is a natural identification of $T_{x}\check{D}$
with $\lq$. Let $\{\alpha(t)\}_{t\in(-\epsilon,\epsilon)}$ be a
curve in $\lq$ with $e^{\alpha(t)}.F^{\bullet}\in\tilde{B}(N)$ (and
$\alpha(0)=0)$. Then $e^{\tau N}.e^{\alpha(t)}.F^{\bullet}$ must
be ($\forall t$) a nilpotent orbit, hence
\[
Ne^{\alpha(t)}F^{p}\;\subset\; e^{\alpha(t)}F^{p-1}
\]
\[
\implies\;\begin{array}[t]{c}
\underbrace{e^{-\alpha(t)}Ne^{\alpha(t)}}\\
e^{-\ad\alpha(t)}N
\end{array}F^{p}\;\subset\; F^{p-1}.
\]
Differentiating and setting $t=0$ gives
\[
[\alpha'(0),N]F^{p}\subset F^{p-1}
\]
\[
\implies[\alpha'(0),N]\in\lp\cap(\ad N)\lq.
\]
But from the picture $$\includegraphics[scale=0.7]{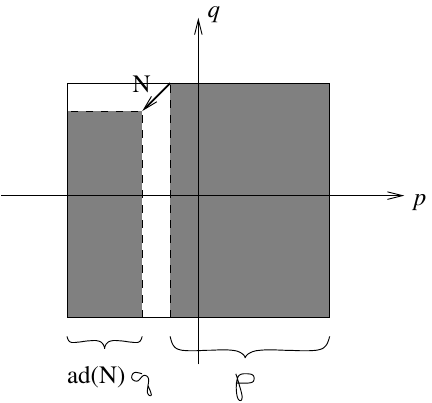}$$ we have
$\lp\cap(\ad N)\lq=\{0\}$
\[
\implies\alpha'(0)\in\ker(\ad N)=\lz(N)_{\CC},
\]
so that $\lz(N)_{\CC}\cap\lq\cong\lz(N)_{\CC}/F^{0}(\lz(N))$ is identified
with $T_{x}\tilde{B}(N)$.

Now suppose that $\lm^{p,q}=\overline{\lm^{q,p}}$ ($\forall p,q$),
i.e. $x\in\tilde{B}^{\RR}(N)$, and let $\beta(t)$ be a curve in
$\lz(N)_{\CC}$ such that $(e^{\beta(t)}F^{\bullet},W(N)_{\bullet})$
stays in $\tilde{B}^{\RR}(N)$ (and $\beta(0)=0)$. Then $\overline{e^{\beta(t)}F^{q}}\cap e^{\beta(t)}F^{p}\cap W_{p+q}$,
or (since $\mathfrak{z}(N)_{\mathbb{C}} \subseteq W_0 \mathfrak{m}_{\mathbb{C}}$) equivalently $\overline{F^{q}}\cap e^{-\overline{\beta(t)}}e^{\beta(t)}F^{p}\cap W_{p+q}$,
must remain of the same dimension as at $t=0$ (for all $p,q$). By
a short calculation, this implies that $\Im(\beta'(0))\in(F^{0}+\overline{F^{0}})\cap\lz(N)_{\RR}$,
so that
\[
\beta'(0)\in\left((F^{0}+\overline{F^{0}})\cap\lz(N)\right)_{\CC}\oplus\left((\lq\cap\overline{\lq})\cap\lz(N)\right)_{\RR}.
\]
Hence under the projection $\lm_{\CC}\underset{\rho}{\twoheadrightarrow}\lm_{\CC}/F^{0}\cong\lq$,
\[
\rho\left(\beta'(0)\right)\in\rho\left(\lz(N)_{\RR}\right)\cong\lz(N)_{\RR}/\{\lm^{0,0}\cap\lz(N)_{\RR}\}.
\]

It follows that $Z(N)(\RR)$ acts \emph{locally} transitively on $\tilde{B}^{\RR}(N)$,
i.e. that the orbit of a point yields a real-analytic open subset;
and a connected set cannot be a (disjoint) union of such orbits.
\end{proof}

\section{proof of Theorem \ref{thm1}}

Express the $\QQ$-split mixed Hodge structure $(F^{\bullet},W(N)_{\bullet})$ and $\sl$-representation
$(Y,N_{+},N)$ of $\S3$ by $(\vft,d\rho)$. The idea of the proof
is to let $Z(N)(\RR)$ act on this pair to yield (by Lemma \ref{lemma4})
all of $\tilde{B}^{\RR}(N)$. Then take (as per Lemma \ref{lemma2}(b))
the $\QQ$-closure of the union of the images of all $\vft'\in Z(N)(\RR).\vft$,
to yield $\tilde{M}_{B^{\RR}(N)}$. (At the end of the proof we shall
check that this coincides with $\tilde{M}_{B(N)}$.) In fact, all
we really need to determine is $\tilde{\lm}_{B^{\RR}(N)}$, which
we can do by acting infinitesimally (to arbitrary order) on the tangent
space to $\vft$ -- that is, by repeated application of $\ad\lz(N)_{\RR}$.
\begin{proof}
We start with an explicit formula for $\vft$, using Definition \ref{def24}.
The splitting of $F^{\bullet}$ induced by the $\left\{ \oplus_{q}I^{p,q}(\lm_{\CC})\right\} _{p\in\ZZ}$
kills Hodge $(0,0)$-tensors, hence any Hodge tensors in the (weight 0) nilpotent
orbit, and scales $N\,\left(\in I^{-1,-1}(\lm_{\CC})\right)$. Thus
it may be written $\ad\xi$ for some $\xi\in\tilde{\lz}(N)_{\CC}=(\lz(N)+\langle Y\rangle)_{\CC}\subseteq\lm_{\CC}$.
Since $\vft$ is $\RR$-split, $\ad\bar{\xi}$ not only splits $\overline{F^{\bullet}}$
but has eigenspaces $\left\{ \oplus_{p}I^{p,q}(\lm_{\CC})\right\} _{q\in\ZZ}$.
In fact, from $N\in I^{-1,-1}(\lm_{\CC})$ it is clear that $[N,\xi]=N=[N,\bar{\xi}]$.
Further, $\xi$ and $\bar{\xi}$ commute with each other and with
$Y$ since the three are simultaneously diagonalizable. By Definition
\ref{def24}(i), we have immediately that 
\[
\vft(z,w)=\exp(\log(z)\ad\xi+\log(w)\ad\bar{\xi})
\]

Now $\vft(z^{2},w^{2})/\exp(\log(zw)\ad Y)$ is clearly of pure weight
zero, so is of the form $\exp(\log(z/w)\text{ad}(i\phi))$ where we
have 
\[
Y=\xi+\bar{\xi}\;,\;\; i\phi=\xi-\bar{\xi}.
\]
In particular, $\phi$ commutes with $N$ and $Y$ and so 
\[
\phi\in(\lz(N)\cap E(0))_{\RR}=\lm(0)_{\RR}.
\]
Recalling \eqref{eqn sl2-repn}, we see that $\phi$ acts on $\lm$
through the $\{\tilde{P}_{\ell}\}$ while $Y$ acts through the $\{V(\ell)\}.$
The formula becomes \begin{equation}\label{eqn becomes}\vft (z,w)=\exp \left( \frac{1}{2} \log(zw)\ad Y + \frac{i}{2} \log(z/w)\ad \phi \right) ,\end{equation}
and $\phi$ determines $\vf_{\text{split}}$ through
\[
\left(Gr_{k}^{W(N)}\vf\right)(z)=\left.\vft(z,z^{-1})\right|_{E(k)}=\exp\left(\left.i\log(z)(\ad\phi)\right|_{E(k)}\right).
\]
For later use, we note that $\lm(0)$ is defined over $\QQ$ and (in view of Remark \ref{rem ?}(a)) closed
under the Lie bracket.

We wish to take the $\QQ$-algebraic group closure of the set of $Z(N)(\RR)$-conjugates
of 
\[
\vft(\CC^{*}\times\CC^{*})=\left\{ \exp(\alpha Y+\beta\phi)\;|\;\alpha,\beta\in\CC\right\} 
\]
in $\tilde{Z}(N)(\CC)$. As mentioned before, to compute $\tilde{\lm}_{B^{\RR}(N)}$
it suffices to take the $\QQ$-Lie-algebra closure of $Z(N)(\RR)$-conjugates
of the tangent space \begin{equation}\label{eqn space}T_{\mathbf{e}}\vft (\CC^*\times\CC^*)=\left\{\alpha Y+\beta \phi \; | \; \alpha,\beta\in\CC \right\}\end{equation}
in $\tilde{\lz}(N)_{\CC}$. We first consider the effect of an infinitesimal
$M_{N}(\RR)$-action on \eqref{eqn space}.

Let $\{\gamma_{i}\}\subset\lm_{N}$ be a basis consisting of $Y$-eigenspaces;
that is, $\gamma_{i}\in E(k_{i})\cap\lm_{N}$ with $k_{i}<0$. Then
\[
(\ad\gamma_{i})Y=-(\ad Y)\gamma_{i}=-k_{i}\gamma_{i},
\]
while $\phi\in(\lz(N)\cap E(0))_{\RR}$, $\gamma_{i}\in\lz(N)\cap E(k_{i})$
gives 
\[
(\ad\gamma_{i})\phi\in\left(\lz(N)\cap E(k_{i}+0)\right)_{\RR}\subset(\lm_{N})_{\RR}.
\]
To first order in $\epsilon$, the tangent plane to $e^{\epsilon\sum\mu_{i}\gamma_{i}}.\vft(z,w)$
($\mu_{i}\in\RR$; action by conjugation) is then given by 
\[
e^{\epsilon\sum\mu_{i}(\ad\gamma_{i})}(\alpha Y+\beta\phi)\;\;=
\]
\[
\alpha Y+\beta\phi-\epsilon\alpha\sum_{i}\mu_{i}k_{i}\gamma_{i}+\epsilon\beta\sum_{i}\mu_{i}(\ad\gamma_{i})\phi
\]
(where $\alpha,\beta\in\CC$). By taking $\beta=0$ \emph{we already
get all of $(\lm_{N}+\langle Y\rangle)_{\CC}$} from the effect on
$Y$ alone. (In fact, this essentially reproves part (c) of Kostant.)

To go from $M_{N}(\RR)$-action to $Z(N)(\RR)$-action, let $\gamma\in(\lm(0))_{\RR}$;
then
\[
\left\{ \begin{array}{c}
(\ad\gamma)Y=-(\ad Y)\gamma=0\\
(\ad\gamma)\phi\in(\lm(0))_{\RR}\mspace{45mu},
\end{array}\right.
\]
 and so
\[
e^{\epsilon(\ad\gamma+\sum_{i}\mu_{i}(\ad\gamma_{i}))}(\alpha Y+\beta\phi)\mspace{250mu}
\]
\[
\mspace{80mu}\equiv\;\beta\left(\phi+\epsilon(\ad\gamma)\phi+\frac{\epsilon^{2}}{2}(\ad\gamma)^{2}\phi+\cdots\right)\;\;\text{mod}\;(\lm_{N}+\langle Y\rangle)_{\CC}.
\]
Denote by $\lg_{B^{\RR}(N)}$ the $\QQ$-Lie-algebra closure in $\lm(0)$
($\cong\lz(N)/\lm_{N}$) of\footnote{Note that the vector space sum in \eqref{eqn !!!} stabilizes after finitely many terms, hence is well-defined (and easy to compute in this form).} \begin{equation}\label{eqn !!!}\langle \phi \rangle_{\CC} + \sum_{\gamma\in \lm(0)_{\RR}}(ad \gamma)\langle \phi\rangle_{\CC} + \sum_{\gamma\in\lm(0)_{\RR}}(\ad \gamma)^2\langle \phi \rangle_{\CC} + \cdots .\end{equation}
Then $\langle Y\rangle$ and 
\[
\tilde{\lg}_{B^{\RR}(N)}:=\lg_{B^{\RR}(N)}+\langle Y\rangle
\]
are $\QQ$-Lie algebras in $\tilde{\lz}(N)$ mapping isomorphically
to their images in $\tilde{\lz}(N)/\lm_{N}$, and we have proved the
short-exact sequence
\[
0\to\lm_{N}\to\tilde{\lm}_{B^{\RR}(N)}\to\tilde{\lg}_{B^{\RR}(N)}\to0.
\]
Note that we have crucially used the facts that $\lm_{N}+\langle Y\rangle$
is a $\QQ$-Lie algebra and that $\lm(0)$ and $Y$ commute, so that
the only $\QQ$-closure which needs to be taken is that of \eqref{eqn !!!}.
Intersecting with $\lz(N)$ now gives
\[
0\to\lm_{N}\to\lm_{B^{\RR}(N)}\to\lg_{B^{\RR}(N)}\to0.
\]

To finish off the proof we must show that none of these groups ``increases''
upon passing from $B^{\RR}(N)$ to $B(N)$. Given $\vft'\in\tilde{B}(N)$,
there exists a natural element%
\footnote{This is by a result of Deligne, cf. \cite[sec. 4]{KP}. What is not
addressed there is the fact that (in the present setting) $\delta$
belongs to the Mumford-Tate Lie algebra $\lm$. This follows at once
from its functoriality with respect to Tannakian operations and property
(3) below. %
}
\[
\delta\in\left(\bigoplus_{p,q\leq-1}I^{p,q}(\lm)\right)_{\RR}
\]
such that:

(1) $e^{-i\delta}.\vft'$ is $\RR$-split;

(2) $\delta\in W_{-2}\lm$; and

(3) $\delta$ commutes with all $(r,r)$-morphisms of MHS.\\
Clearly (2) gives $e^{-i\delta}\in(W_{-2}M)(\CC)\subseteq(W_{-1}M)(\CC)$,
while (3) $\implies$ $[\delta,N]=0$ 
\[
\implies\;\delta\in\lz(N)_{\RR}\;\implies\; e^{-i\delta}\in Z(N)(\CC).
\]
By Lemma \ref{lem21}, we now have $\delta\in M_{N}(\CC)$. Now \emph{a
priori}, $e^{-i\delta}.\vft'$ is just a MHS polarized by $N$ (cf. Remark \ref{rem ?}(c)). 
By \cite[Cor 3.13]{CKS}, this suffices to make
$e^{\tau N}e^{-i\delta}F_{\vft'}^{\bullet}$ a nilpotent orbit; hence
$e^{-i\delta}.\vft'\in B^{\RR}(N)$. This shows that 
\[
\tilde{B}(N)\neq\emptyset\;\implies\;\tilde{B}^{\RR}(N)\neq\emptyset.
\]

Provided we act on the Hodge filtration $F^{\bullet}$ (instead of
$\vft$), [loc. cit.] also implies that
$\tilde{B}(N)$ is closed under the action of $M_{N}(\CC)$. So by
the last paragraph, the $M_{N}(\CC)$-orbit of $\tilde{B}^{\RR}(N)$
is all of $\tilde{B}(N)$. Together with the proof of Lemma \ref{lemma4},
this means that on an infinitesimal level the $\ad(\lq\cap\bar{\lq}\cap\lm_{N,\CC})_{i\RR}$-orbit
of our given $\QQ$-split MHS $p=(F^{\bullet},W(N)_{\bullet})$ surjects
onto $T_{p}\tilde{B}(N)/T_{p}\tilde{B}^{\RR}(N)$. This is consistent
with the action of $\ad(\lq\cap\bar{\lq}\cap\lm_{N,\CC})$ on $\vft$.
The upshot is that we only need to examine the effect on our original
$\alpha Y+\beta\phi$ by a larger subset of $\ad(\lm_{N,\CC})$ replacing
$\ad(\lm_{N,\RR})$ in the above argument, which obviously does not
change the $\QQ$-closure. The proof of Theorem \ref{thm1} is now
complete.
\end{proof}
Two new results come out of this proof. For the first, we need
\begin{defn}
\label{def a}$\lg_{N}:=\lz(N)\cap E(0)$, $\tilde{\lg}_{N}:=\tilde{\lz}(N)\cap E(0)\,(=\lg_{N}+\langle Y\rangle)$,
$\lg_{B(N)}:=\lg_{B^{\RR}(N)}$; with corresponding Lie groups $G_{N},\, G_{B(N)}\le M_{B(N)}$
and $\tilde{G}_{N},\,\tilde{G}_{B(N)}\leq\tilde{M}_{B(N)}$.\end{defn}
\begin{prop}
\label{prop a}We have semi-direct product decompositions
\[
\tilde{M}_{B(N)}=M_{N}\rtimes\tilde{G}_{B(N)}\subseteq M_{N}\rtimes\tilde{G}_{N} = \tilde{Z}(N)
\]
\[
M_{B(N)}=M_{N}\rtimes G_{B(N)}\subseteq M_{N}\rtimes G_{N} = Z(N)
\]
with all groups $\QQ$-algebraic, $\tilde{G}_{B(N)}$, $\tilde{G}_{N}$,
$G_{B(N)}$, $G_{N}$ reductive, and $M_{N}$ the unipotent radical
in each case.
\end{prop}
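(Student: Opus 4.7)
The plan is to repackage the Lie-algebra output of the proof of Theorem \ref{thm1} as semi-direct product decompositions at the algebraic-group level, and then verify the reductivity assertions. From that proof the short exact sequences
\[
0\to\lm_{N}\to\lm_{B(N)}\to\lg_{B(N)}\to 0,\qquad 0\to\lm_{N}\to\tilde\lm_{B(N)}\to\tilde\lg_{B(N)}\to 0
\]
already carry canonical $\QQ$-Lie-algebra splittings: the complements $\lg_{B(N)}$ and $\tilde\lg_{B(N)}=\lg_{B(N)}+\langle Y\rangle$ were constructed as $\QQ$-subalgebras of $\tilde\lz(N)\cap E(0)$, with $Y\in\lm_{\QQ}$. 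Complementarity to $\lm_{N}$ is forced by Lemma \ref{lem21}(iii): $\lm_{N}\subseteq W(N)_{-1}\lm$ while $\tilde\lg_{B(N)}\subseteq E(0)$, so their intersection vanishes; and $\lm_{N}$ is an ideal in each ambient algebra. The evident inclusions $\lg_{B(N)}\subseteq\lg_{N}$ and $\tilde\lg_{B(N)}\subseteq\tilde\lg_{N}$ yield the corresponding statements for $\lg_{N},\tilde\lg_{N}$.

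Passing to algebraic groups, I take $G_{B(N)},\tilde G_{B(N)},G_{N},\tilde G_{N}$ as in Definition \ref{def a}. Since $M_{N}$ is normal and unipotent in each ambient group and meets every complement trivially at the Lie-algebra level, the splittings integrate to the asserted semi-direct product decompositions
\[
\tilde M_{B(N)}=M_{N}\rtimes\tilde G_{B(N)}\subseteq M_{N}\rtimes\tilde G_{N},\qquad M_{B(N)}=M_{N}\rtimes G_{B(N)}\subseteq M_{N}\rtimes G_{N}.
\]

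For reductivity, I would argue as follows. Combining Lemma \ref{lem21}(ii),(iii) with the $\sl$-isotypical decomposition of $\S3$ identifies $\lg_{N}$ with $\lm(0)$, the centralizer in $\lm$ of the image of $\rho:SL_{2}\to M$; this is reductive by standard Lie theory, and adjoining the commuting semisimple element $Y$ gives $\tilde\lg_{N}=\lg_{N}\oplus\langle Y\rangle$, again reductive. For $G_{B(N)}$ and $\tilde G_{B(N)}$, Theorem \ref{thm1}(B) presents them as the (full) Mumford-Tate groups of the polarized pure Hodge structures on $\oplus_{k\geq 0}P_{k}$ (cf.\ Remark \ref{rem ??}), which are reductive by the Cartan-involution criterion already used for $Gr_{0}^{W}Z(N)$ above. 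Granted reductive quotients, $M_{N}$ is automatically the unipotent radical in each of the four semi-direct products. The one subtlety I would be careful about in write-up is confirming $\QQ$-rationality (as opposed to mere $\RR$-rationality) of the complements, but this is already baked into the construction of $\lg_{B(N)}$ as a $\QQ$-closure inside the $\QQ$-defined subspace $\lm(0)$, together with $Y\in\lm_{\QQ}$.
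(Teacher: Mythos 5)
Your proposal is correct and follows essentially the route the paper intends: Proposition \ref{prop a} is stated as a byproduct of the proof of Theorem \ref{thm1}, whose exact sequences split via the $\QQ$-subalgebras $\lg_{B(N)}\subseteq\lg_{N}=\lm(0)\subseteq E(0)$ (with $Y\in\lm_{\QQ}$), transverse to $\lm_{N}\subseteq W(N)_{-1}\lm$, and reductivity comes from $Gr_{0}^{W}M_{B(N)}$ being a Mumford-Tate group of polarized Hodge structures together with the reductivity of $Z(N)/M_{N}$. Your only (harmless) variation is deducing reductivity of $G_{N},\tilde{G}_{N}$ from the standard fact that the centralizer of an $\sl$-triple is reductive, where the paper would instead cite its earlier compact-real-form argument for $Gr_{0}^{W}Z(N)\cong G_{N}$.
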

While $G_{B(N)}$ depends on the choice of $\QQ$-split MHS in Lemma
\ref{lemma3}, its (isomorphic) image under the quotient by $M_{N}$
is $Gr_{0}^{W}M_{B(N)}$ (which does not).

The second result refines Lemma \ref{lemma4}:
\begin{prop}
\label{prop b}(i) $M_{B(N)}(\RR)$ acts transitively on $\tilde{B}^{\RR}(N)$;\\
(ii) $M_{N}(\CC)\rtimes G_{B(N)}(\RR)$ acts transitively on $\tilde{B}(N)$.
\end{prop}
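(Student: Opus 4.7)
The plan for (i) combines the structural results already in hand: the semidirect-product decomposition $M_{B(N)} = M_N \rtimes G_{B(N)}$ from Proposition \ref{prop a}, the Kostant/Schmid transitivity on $\sl$-triples recalled in \S1, and the identification (from Theorem \ref{thm1}(B)) of $G_{B(N)}$ as the Mumford-Tate group of the generic $\vf_{\text{split}}$. The picture is that $\tilde{B}^\RR(N)$ fibers as
\[
\tilde{B}^\RR(N)\;\longrightarrow\;S,\qquad \vft\longmapsto \vf_{\text{split}},
\]
whose fiber over a given $\vf_{\text{split}}$ is the set of $\RR$-$\sl$-triples $(Y',N'_+,N)$ producing it. By Kostant/Schmid, $M_N(\RR)$ acts transitively on each fiber. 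On the base, Theorem \ref{thm1}(B) identifies $G_{B(N)}$ with the Mumford-Tate group of a generic point of $S$, so $S$ (taken as a connected component, per the convention of \S1) is the Mumford-Tate domain $D_{G_{B(N)}} = G_{B(N)}(\RR).\vf_{\text{split}}$ and $G_{B(N)}(\RR)$ acts transitively on it tautologically. The semidirect structure $M_{B(N)} = M_N \rtimes G_{B(N)}$ then combines the two transitivities into transitivity of $M_{B(N)}(\RR)$ on $\tilde{B}^\RR(N)$.

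For (ii) the key input is the final paragraph of the proof of Theorem \ref{thm1}: every $\vft'\in\tilde{B}(N)$ is of the form $e^{i\delta}\cdot q$ with $q\in\tilde{B}^\RR(N)$ and $\delta\in\lm_{N,\RR}\subset\lm_{N,\CC}$, so $\tilde{B}(N) = M_N(\CC)\cdot\tilde{B}^\RR(N)$. Applying (i), fix a base-point $p$ and write $q = h\cdot p$ with $h = h'h''\in M_N(\RR)\rtimes G_{B(N)}(\RR)$; then $\vft' = (e^{i\delta}h')\cdot h''\cdot p$, where $e^{i\delta}h'\in M_N(\CC)$ and $h''\in G_{B(N)}(\RR)$. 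Since $G_{B(N)}$ normalizes $M_N$ (the latter being the unipotent radical of $M_{B(N)}$), the product $M_N(\CC)\cdot G_{B(N)}(\RR)$ forms a subgroup isomorphic to the semidirect product $M_N(\CC)\rtimes G_{B(N)}(\RR)$, and the display above exhibits $\vft'$ as a translate of $p$ by an element of this group.

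The main obstacle is the transitivity of $G_{B(N)}(\RR)$ on the base $S$ in part (i). A priori $S$ is a $Gr_0^W Z(N)(\RR)$-orbit (by Lemma \ref{lemma4} pushed down along $\vft\mapsto\vf_{\text{split}}$), and the inclusion $G_{B(N)}\subseteq Gr_0^W Z(N)$ may be strict. The identification of $S$ with $D_{G_{B(N)}}$ — so that transitivity is tautological — is extracted from Theorem \ref{thm1}(B) together with the standard principle that a Mumford-Tate group acts transitively on its Mumford-Tate domain. Should one prefer to avoid this tautology, the equivalent tangent-space verification $\rho(\lm_{B(N),\RR}) = \rho(\lz(N)_\RR)$ (in $\lq\cong T_p\check{D}$) can be obtained directly by combining Proposition \ref{prop a} with the tangent computation from the proof of Lemma \ref{lemma4}, followed by the standard opens-cover-connected argument used there.
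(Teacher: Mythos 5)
Your architecture for the proof is reasonable, and two of its three pieces are fine: the fibers of $\vft\mapsto\vf_{\text{split}}$ are indeed handled by Kostant/Schmid together with \cite[(3.3)]{CK} (the admissible gradings form $Y+\lm_{N,\RR}$, a single $M_{N}(\RR)$-orbit acting trivially on the split part), and your deduction of (ii) from (i) via the $e^{-i\delta}$ argument at the end of the proof of Theorem \ref{thm1} and the fact that $G_{B(N)}$ normalizes $M_{N}$ is exactly right. The genuine gap is the base step of (i), which you flag but do not close. A priori the base is the full orbit $S=Gr_{0}^{W}Z(N)(\RR).\vf_{\text{split}}$ (this is what Lemma \ref{lemma4} gives), and what must be proved is that the $G_{B(N)}(\RR)$-orbit of a point exhausts $S$. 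The ``standard principle that a Mumford-Tate group acts transitively on its Mumford-Tate domain'' cannot supply this: a Mumford-Tate domain is \emph{defined} as the orbit of a Hodge structure under the real points of its Mumford-Tate group, so the principle is vacuously true and begs exactly the question at issue, namely whether $S$ coincides with that orbit rather than strictly containing it. Theorem \ref{thm1}(B) identifies $G_{B(N)}$ as the $\QQ$-closure of a generic $\vf_{\text{split}}$ (equivalently the Mumford-Tate group of the set $S$); it does not by itself say that the orbit of the generic point fills out $S$. Your fallback sentence does not repair this: Proposition \ref{prop a} plus the tangent computation in Lemma \ref{lemma4} give you $T_{q}\tilde{B}^{\RR}(N)\cong\rho(\lz(N)_{\RR})$ and the splitting $\lm_{B(N)}=\lm_{N}\oplus\lg_{B(N)}$, but not the required inclusion $\rho(\lz(N)_{\RR})\subseteq\rho(\lm_{B(N),\RR})$ at every point, which is the whole content of the statement.

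What actually closes the gap -- and why the paper presents Proposition \ref{prop b} as a byproduct of the \emph{proof} of Theorem \ref{thm1} rather than of its statement -- is the explicit construction of $\lg_{B(N)}$ as the $\QQ$-Lie-algebra closure of \eqref{eqn !!!}. From Theorem \ref{thm1}(B) (or directly from that construction) one has $\Ad(g)\phi\in\lg_{B(N),\RR}$ for every $g\in G_{N}(\RR)$, hence also $[\gamma,\Ad(g)\phi]=\left.\tfrac{d}{dt}\right|_{t=0}\Ad(e^{t\gamma}g)\phi\in\lg_{B(N),\RR}$ for every $\gamma\in\lg_{N,\RR}$. Now use reductivity of $G_{N}$ to choose a $B$-orthogonal, $\ad\lg_{B(N)}$-stable complement $\mathfrak{c}$ of $\lg_{B(N)}$ in $\lg_{N}$ defined over $\QQ$: then $[\mathfrak{c}_{\RR},\Ad(g)\phi]\subseteq\lg_{B(N),\RR}\cap\mathfrak{c}_{\RR}=\{0\}$, and since $\lg_{N}\subseteq E(0)$ also $[\mathfrak{c},Y]=0$, so $\mathfrak{c}_{\RR}$ lies in the isotropy algebra of the point $\Ad(g)\vf_{\text{split}}$ of $S$. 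Consequently the $G_{B(N)}(\RR)$-orbit and the $G_{N}(\RR)$-orbit have the same tangent space at every point of $S$, and the open-orbits-in-a-connected-set argument already used in Lemma \ref{lemma4} gives transitivity of $G_{B(N)}(\RR)$ on the base; combined with your fiber argument this yields (i), and then (ii) as you say. Without some version of this computation, the step you label ``tautological'' is precisely the missing idea.
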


\begin{rem}
As was pointed out by a referee (whose argument we shall paraphrase below), there is a simpler proof of Theorem \ref{thm1}(A).  The value of the above approach lies in the formula \eqref{eqn !!!} for $Gr^W_0$ of $\lm_{B(N)}$ (used for example in \cite[$\S\S$6-7]{KR}) and the use of deformations of $\tilde{\varphi}$ to parametrize points of $\tilde{B}(N)$ (used later in this paper and in \cite{KP2}).

The nontrivial inclusion $M_N\leq M_{B(N)}$ is equivalent to the statement that $M_N$ fix all ($\QQ$-)Hodge tensors common to all $(F^{\bullet},W(N)_{\bullet})\in \tilde{B}(N)$.  Let $\mathfrak{t}\in T(\lm)$ be such a tensor (of type $(p,p)$), and $\mu \in \lm_{N,\RR}$.  It will suffice to show that $\text{ad}(\mu)\mathfrak{t}=0$.  (We shall still write $\text{ad}$ for the action of $\lm$ on $T(\lm)$.)

Now $e^{\lambda \mu}\in M(\RR)$ ($\lambda\in\RR$ arbitrary) preserves $D$ and commutes with $e^N$, hence takes nilpotent orbits to nilpotent orbits.  So it preserves $\tilde{B}(N)$.  In particular, $(e^{-\lambda\mu}F^{\bullet},W(N)_{\bullet})$ belongs to $\tilde{B}(N)$ and we have $\mathfrak{t}\in e^{-\lambda\mu}F^{p}\cap W(N)_{2p} T(\lm)$.  These are sent (by $e^{\lambda\mu}$) to $(F^{\bullet},W(N)_{\bullet})$ resp. $e^{\lambda\mu}\mathfrak{t}\in F^p\cap W(N)_{2p}T(\lm_{\RR})$.  So $\left. \left(\frac{d}{d\lambda}e^{\lambda\mu}\mathfrak{t}\right)\right|_{\lambda=0}\in F^p\cap W(N)_{2p}T(\lm_{\RR})$, which implies that $\text{ad}(\mu)\mathfrak{t}\in F^p\cap W(N)_{2p} T(\lm_{\RR}).$ On the other hand, we also have $\mu\in W(N)_{-1}\lm_{\RR}$, whence $\text{ad}(\mu)\mathfrak{t}\in W(N)_{2p-1} T(\lm_{\RR})$.  Since $F^p\cap W(N)_{2p-1}T(\lm_{\RR})={0}$, $\text{ad}(\mu)\mathfrak{t}=0$.  

As the referee pointed out, taking $\mu=N$ yields $\text{ad}(N)\mathfrak{t}=0$, hence the interesting conclusion that $p\leq 0$.
\end{rem}

\section{Arbitrary rank}

We shall now retell the story of $\S\S2-4$ in the more general context
of a boundary component associated to a (finitely-generated) \emph{rational
nilpotent cone}
\[
\sigma:=\QQ_{\geq0}\langle N_{1},\ldots,N_{s}\rangle\subset\lm_{\QQ},
\]
where $N_{1},\ldots,N_{s}\in\lm_{\QQ}$ are commuting nilpotent elements.
(Without loss of generality these may be assumed to be strongly \emph{convex},
i.e. $\sigma\cap(-\sigma)=\{0\}$.) Write 
\[
\sigma^{\circ}:=\QQ_{>0}\langle N_{1},\ldots,N_{s}\rangle
\]
for its interior and $\langle\sigma\rangle$ for its $\QQ$-vector-space
closure, with $\sigma_{\RR}^{\circ} \subset \sigma_{\mathbb{R}} \subseteq \langle\sigma\rangle_{\RR} \subseteq \langle\sigma\rangle_{\CC}$ all having the obvious meaning.

A \emph{$\sigma$-nilpotent orbit} is a subset of $\check{D}_{M}$
of the form $e^{\langle\sigma\rangle_{\CC}}.F^{\bullet}$ where $F^{\bullet}$
satisfies \begin{equation}\label{eqn hatch}\begin{array}{cc}\text{(i) }N_{j}F^{p}\subset F^{p-1}\;(\forall j)\\\text{(ii) }e^{\sum_{j}\tau_{j}N_{j}}F^{\bullet}\in D_{M}\mspace{10mu} & \text{if all }\Im(\tau_{j})\gg0.\end{array}\end{equation}
As before, we set \begin{equation}\label{p23*} \tilde{B}(\sigma):=\left\{ \left.F^{\bullet}\in\check{D}_{M}\;\right|\; F^{\bullet}\text{ satisfies (i) and (ii)}\right\} , \end{equation}and then \begin{equation}\label{p23**} B(\sigma):=e^{\langle\sigma\rangle_{\CC}}\backslash \tilde{B}(\sigma) \end{equation}is the set of $\sigma$-nilpotent orbits. We assume $B(\sigma)\neq\emptyset$.
The \emph{rank} of $B(\sigma)$ is just $\dim\langle\sigma\rangle\, =:r$.

Assume $\tilde{B}(\sigma)\neq \emptyset$. A fundamental result of Cattani and Kaplan (\cite{CK}, Thm. 2.3(ii))
says that the weight filtration $W(N)_{\bullet}$ of $\lm$ is independent
of the choice of $N\in\sigma^{\circ}.$ Without loss of generality
we can put $N:=N_{1}+\cdots+N_{s}$, and write $W(\sigma)_{\bullet}\lm:=W(N)_{\bullet}\lm$,
$W_{\bullet}^{\sigma}M:=W_{\bullet}^{N}M$. It will also be convenient
to take elements $\hat{N}_{1},\ldots,\hat{N}_{r}\in\sigma^{\circ}$
giving a basis for $\langle\sigma\rangle$. We have the Lie algebras
\[
\lz(\sigma):=\bigcap_{j=1}^{s}\ker(\ad N_{j})=\bigcap_{i=1}^{r}\ker(\ad\hat{N}_{i})
\]
\[
\lm_{\sigma}:=\text{im}(\ad N)\cap\lz(\sigma)
\]
and their corresponding Lie groups $Z(\sigma),\, M_{\sigma}\leq M$.
Since $\lz(\sigma)\subset\lz(N)$, we have
\[
\begin{array}{ccc}
\lm_{\sigma} & = & \lm_{N}\cap\lz(\sigma)\\
 & = & \left(W(N)_{-1}\cap\lz(N)\right)\cap\lz(\sigma)\\
 & = & W(\sigma)_{-1}\cap\lz(\sigma)
\end{array}
\]
which is clearly independent of the choice of $N$.

As before we may interpret the elements $F^{\bullet}\in\tilde{B}(\sigma)$
as LMHS $(F^{\bullet},W(\sigma)_{\bullet})$ on $\lm$, and define the $\RR$-split loci\footnote{also nonempty by \cite[(2.3)]{CK}, \cite[Prop. 4.66]{CKS}, and the $e^{-i\delta}$ process from the end of $\S4$.} $\tilde{B}^{\RR}(\sigma)$, $B^{\RR}(\sigma)$ and the Mumford-Tate
group $M_{B(\sigma)}$, $M_{B^{\RR}(\sigma)}$. These MHS have a richer
structure than in the rank-one setting: according to \cite[Thm. 2.3(iii)]{CK},
the Hard Lefschetz isomorphisms and polarizations $B_{k}$ of Remark
\ref{rem ?}(c) are induced by any element of $\sigma^{\circ}$ (e.g.,
all the $\hat{N}_{i}$). (However, we shall put $B_{k}(v,w):=B(v,N^{k}w)$
with our choice of $N$ above.) Moreover, the $\{N_{j}\}$ all belong
to $I^{-1,-1}(\lm_{\CC})$ -- meaning that $\langle\sigma\rangle$
consists of Hodge tensors.

Accordingly, for a given $\vft\in\tilde{B}(\sigma)$, there is a reduction
in the information required to describe $\vf_{\text{split}}$ in \eqref{eqn*1},
where $\sigma$ replaces $N$ everywhere and 
\[
\tilde{Z}(\sigma):=\left\{ g\in M\;\left|\;\Ad(g)N_{j}=\psi(g)N_{j}\;(\forall j)\text{ for some character }\psi\right.\right\} .
\]
Namely, in order for the primitive spaces in Remark \ref{rem ??}(b),(b')
to match up with $Gr_{-k}^{W(\sigma)}\lz(\sigma)$ under $N^{k}$,
we must define%
\footnote{Note that $k\geq0$, and $\text{ad}(N_{j})\circ(\text{ad}(N))^{k}$
maps from $Gr_{k}^{W(\sigma)}\mathfrak{m}$ to $Gr_{-k-2}^{W(\sigma)}\mathfrak{m}$.%
}
\begin{equation}\label{eq*22}
P_{k}:=\bigcap_{j=1}^{s}\ker\left(\ad N_{j}\circ(\ad N)^{k}\right)\subset Gr_{k}^{W(\sigma)}\lm.
\end{equation}

\begin{prop}
(i) $Z(\sigma)/M_{\sigma}$ is reductive and acts faithfully on $\oplus_{k\geq0}P_{k}$.\\
(ii) The HS on $\oplus_{k}Gr_{k}^{W(\sigma)}\lm$ can be recovered
from that on $\oplus_{k\geq0}P_{k}$ by \begin{equation}\label{eqn**}Gr_{k}^{W(\sigma)}\lm=\bigoplus_{\ell\geq\max{\{0,-k\}}}\sum_{1\leq j_{1},\ldots,j_{\ell}\leq s}(\ad N_{j_{1}})\circ\cdots\circ(\ad N_{j_{\ell}})P_{k+2\ell}.\end{equation}\end{prop}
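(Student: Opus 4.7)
The plan for part (i) is to establish faithfulness first and then reductivity, mirroring the rank-one Proposition. The kernel of the $Z(\sigma)$-action on $\bigoplus_k Gr_k^{W(\sigma)}\lm$ is $Z(\sigma)\cap W(\sigma)_{-1}M$; applying Lemma \ref{lem21}(iii) to $N\in\sigma^{\circ}$ and intersecting with $Z(\sigma)$ identifies this kernel with $Z(\sigma)\cap M_N=M_\sigma$. Since part (ii) expresses $\bigoplus_k Gr_k^{W(\sigma)}\lm$ as the span of iterated $\ad N_j$-images of $\bigoplus_{k\geq 0}P_k$, and $Z(\sigma)$ commutes with each $N_j$, faithfulness on $\bigoplus_{k\geq 0}P_k$ follows. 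For reductivity, fix $\vft\in\tilde{B}(\sigma)$ and set $C:=\vft(i,-i)\in Z(\sigma)(\CC)$. Since $P_k\subset\bigoplus_{p+q=k}I^{p,q}(\lm_\CC)$, $C^2=\vft(-1,-1)$ acts as the scalar $(-1)^k$ on $P_k$; consequently $\Psi_C$ (conjugation by $C$) commutes with the block action of $Z(\sigma)/M_\sigma$ on $\bigoplus_k P_k$ and is an involution on $Z(\sigma)/M_\sigma$. Each Hermitian form $(v,w)\mapsto Q_k(Cv,\bar w)$ on $P_k$ is positive-definite by Remark \ref{rem ?}(c) (with polarizing element $N\in\sigma^\circ$), and the real form $\mathcal{G}:=\{g\in (Z(\sigma)/M_\sigma)(\CC)\mid \Psi_C(\bar g)=g\}$ preserves these forms, hence is compact; Deligne's criterion then gives the reductivity of $Z(\sigma)/M_\sigma$, with $M_\sigma$ as its unipotent radical.

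For (ii), my approach is to refine the single $\sl$-decomposition \eqref{eqn sl2-repn} to a multi-$\sl_2$ decomposition attached to the cone. Using Jacobson-Morosov (in the Schmid $\QQ$-refinement) on each $N_j$ together with Cattani-Kaplan's compatibility theorem for commuting nilpotent cones (\cite{CK}, Thm.\ 2.3), one constructs commuting $\sl_2$-triples $(Y_j,N_j^+,N_j)$ on $\lm_\CC$ whose $Y_j$ sum to a grading of $W(\sigma)_\bullet$. This yields an isotypic decomposition $\lm_\CC\cong\bigoplus_\alpha V(\ell_1^\alpha)\otimes\cdots\otimes V(\ell_r^\alpha)$ as an $\sl_2^r$-representation. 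In each irreducible summand $V(\ell_1)\otimes\cdots\otimes V(\ell_r)$, the joint highest-weight line sits in $Gr_{\sum\ell_j}^{W(\sigma)}\lm$ and, via the Hard Lefschetz isomorphism $(\ad N)^{\sum\ell_j}:Gr_{\sum\ell_j}\overset{\sim}{\to}Gr_{-\sum\ell_j}$, corresponds to the joint lowest-weight line in $\lz(\sigma)\cap Gr_{-\sum\ell_j}^{W(\sigma)}\lm$. Hence it lies in $P_{\sum\ell_j}$, and a dimension count shows it is exactly the contribution of this irrep to $P_{\sum\ell_j}$.

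It then remains to observe that ordered products $\ad N_{j_1}\circ\cdots\circ\ad N_{j_\ell}$ applied to the joint highest-weight line sweep out every weight space of the irrep -- elementary $\sl_2^r$-representation theory, since in a tensor product of $\sl_2$-irreducibles every lower weight vector is reached from the top by successive lowering operators. Counting weights, applying $\ell$ lowering operators to $Gr_{k+2\ell}$ lands in $Gr_k$, matching the summation index in \eqref{eqn**}; assembling over all irreducible summands produces the stated formula. The main obstacle is establishing the joint $\sl_2^r$-structure tailored to $\sigma$ (and verifying it refines both the weight filtration and the $N$-Hard Lefschetz); once in hand, both statements follow from standard representation theory combined with the Cartan-involution trick used above.
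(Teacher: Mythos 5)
Your part (i) is essentially sound: the compact-real-form argument (conjugation by $C=\vft(i,-i)$, positivity of $Q_k(C\cdot,\bar{\cdot})$ on the $P_k$, Deligne's criterion) is exactly the rank-one proof of the paper carried over, and identifying the kernel of the $Z(\sigma)$-action on $\oplus_k Gr_k^{W(\sigma)}\lm$ with $M_\sigma$ via Lemma \ref{lem21}(iii) and $\lm_\sigma=W(\sigma)_{-1}\cap\lz(\sigma)$ is correct --- though note that your faithfulness step explicitly invokes (ii), so everything hinges on (ii) being proved independently. That is where the genuine gap lies. Your argument for (ii) is built on the existence of commuting $\sl$-triples $(Y_j,N_j^{+},N_j)$ having the \emph{given} generators $N_j$ as nil-negative elements, with $\sum_j Y_j$ splitting $W(\sigma)_\bullet$. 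Neither Jacobson--Morosov/Schmid (which treat a single nilpotent) nor \cite[Thm.\ 2.3]{CK} (which concerns relative weight filtrations and polarizations, not $\sl$-triples) supplies such a structure, and in general it does not exist: the several-variable $SL_2$-orbit theorem of \cite{CKS} produces commuting triples only after replacing the $N_j$ ($j\geq 2$) by modified nilpotents $\hat{N}_j$, and in general $\hat{N}_j\neq N_j$ --- this is precisely the subtlety that makes the multivariable theory harder than the one-variable one. Consequently the isotypic decomposition of $\lm_\CC$ into $\sl^{\times r}$-irreducibles with the $N_j$ acting as lowering operators, the identification of $P_{\sum\ell_j}$ with joint highest-weight lines, and the sweep-down argument all lack a foundation; the dimension count you mention is also asserted rather than carried out.

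The paper's proof bypasses the need for any such multi-$\sl_2$ structure. By the mixed Lefschetz theorem of \cite{Ca1}, for each $j$ one has $Gr_k^{W(\sigma)}\lm=\ker\left(\ad N_j\circ(\ad N)^k\right)\oplus\text{im}(\ad N_j)$; since these summands are sub-Hodge structures of the polarized Hodge structure $Gr_k^{W(\sigma)}\lm$, semisimplicity of polarized Hodge structures gives $Gr_k^{W(\sigma)}\lm=P_k\oplus\bigl(\sum_{j}\text{im}(\ad N_j)\bigr)$, and a descending induction on the weight then yields \eqref{eqn**} directly. If you wish to rescue an $\sl_2$-theoretic proof, you would have to work with the CKS-modified triples and then compare the primitive spaces defined by the $\hat{N}_j$ with the $P_k$ defined by the $N_j$ --- which is essentially the content you are trying to prove --- or restrict to special split situations (such as the genus-two example of $\S 8$) where the $N_j$ genuinely admit commuting triples.
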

\begin{proof}
(i) is as before. For (ii), the mixed Lefschetz result of \cite[Thm. 4.3]{Ca1}
implies that
\[
Gr_{k}^{W(\sigma)}\lm=\ker\left(\ad \hat{N}_{i}\circ(\ad N)^{k}\right)\bigoplus\text{im}(\ad \hat{N}_{i})
\]
for each $i$. By the nondegeneracy of $Q(\cdot ,N^k \cdot )$, this gives
\[ \begin{array}{ccccc}
Gr_{k}^{W(\sigma)}\lm & = & \bigcap_{i=1}^{r} \ker \left( \text{ad}\hat{N}_i \circ (\text{ad} N)^k\right)  &\oplus & \left( \sum_{i=1}^{r} \text{im}(\text{ad}\hat{N}_i) \right) \\
& = &  P_{k}\oplus\left(\sum_{j=1}^{s}\text{im}(\ad N_{j})\right), \\
\end{array} \]
which inductively leads to \eqref{eqn**}.
\end{proof}
We are now ready for
\begin{thm}
\label{mainthm}For a nonempty Kato-Usui boundary component $B(\sigma)$
of $D_{M}$ associated to a nilpotent cone $\sigma\subset\lm_{\QQ}$,
the Mumford-Tate group $M_{B(\sigma)}$ satisfies the following:\vspace{2mm}

\emph{(A)} $M_{B(\sigma)}$ is contained in the centralizer $Z(\sigma)$
of the cone;\vspace{2mm}

\emph{(B)} $W_{-1}M_{B(\sigma)}=M_{\sigma}$ is its unipotent radical, with (abelian) group isomorphisms $ Gr^{W(\sigma)}_{-k}\mathfrak{m}_{\sigma,\mathbb{C}} \overset{\exp}{\underset{\cong}{\to}} Gr^W_{-k}M_{\sigma}(\mathbb{C})$ for each $k\geq 1$;
and\vspace{2mm}

\emph{(C)} $Gr_{0}^{W}M_{B(\sigma)}=\left.M_{B(\sigma)}\right/M_{\sigma}\;\left(\subset Z(\sigma)\left/M_{\sigma}\right.\right)$
is the $\QQ$-algebraic-group closure of the orbit $\left(Z_{\sigma}\left/M_{\sigma}\right.\right)(\RR).\vf_{\text{split}}$,
which may be regarded as the Mumford-Tate group of a set of polarized
HS%
\footnote{note that a ``polarized HS'' means a \emph{direct sum} of \emph{pure}
polarized HS%
} on $\oplus_{k\geq0}P_{k}$.\vspace{2mm}\\
The analogues of Definition \ref{def a}, Proposition \ref{prop a},
and Proposition \ref{prop b} all hold, with $N$ replaced by $\sigma$
everywhere.\end{thm}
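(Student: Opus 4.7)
The plan is to follow the rank-one argument of \S\S3--4 with $\sigma$ replacing $N$ throughout, invoking three inputs specific to the cone setting: (a) the Cattani--Kaplan independence $W(\sigma)_\bullet = W(N)_\bullet$ for every $N \in \sigma^\circ$; (b) the fact (used in \S 5) that each $N_j$ lies in $I^{-1,-1}(\lm_\CC)$ for every $(F^\bullet,W(\sigma)_\bullet) \in \tilde{B}(\sigma)$, so $\langle\sigma\rangle$ consists of Hodge tensors; and (c) the faithfulness of the $Z(\sigma)/M_\sigma$-action on $\oplus_{k\geq 0}P_k$ established above. Fixing $N := N_1 + \cdots + N_r \in \lm_\QQ$ and using $\lm_\sigma = \lm_N \cap \lz(\sigma)$, Lemma \ref{lem21} gives at once $\text{Lie}(W_k^\sigma M) = W(\sigma)_k\lm$, $Z(\sigma) \subseteq W_0^\sigma M$, and $M_\sigma = Z(\sigma) \cap W_{-1}^\sigma M$.

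Next I would reduce to the $\RR$-split case. For $\vft' \in \tilde{B}(\sigma)$, the Deligne element $\delta \in (\oplus_{p,q\leq -1}I^{p,q}(\lm))_\RR$ commutes with every $(r,r)$-morphism of MHS, hence (by (b)) with each $N_j$; combined with Lemma \ref{lem21}(iii) this forces $\delta \in \lm_{\sigma,\CC}$, and the {}``converse to (2.3)'' of \cite{CK} applied to $N$ yields $e^{-i\delta}.\vft' \in \tilde{B}^\RR(\sigma)$. In particular $\tilde{B}^\RR(\sigma) \neq \emptyset$, and the proof of Lemma \ref{lemma3} produces a $\QQ$-split element of $\tilde{B}^\RR(\sigma)$ whose Deligne grading $Y \in \lm_\QQ$ extends to a $\QQ$-$\sl$-triple $(Y,N_+,N)$ in $\lm_\QQ$, giving the isotypical decomposition \eqref{eqn sl2-repn}. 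The analogue of Lemma \ref{lemma4} then follows from the same tangent-space computation, the only change being that a curve $\alpha(t)$ in $\lq$ with $e^{\alpha(t)}F^\bullet \in \tilde{B}(\sigma)$ must satisfy $[\alpha'(0),N_j] \in \lp \cap (\ad N_j)\lq = \{0\}$ for every $j$, whence $\alpha'(0) \in \lz(\sigma)_\CC$.

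For the main computation I would write $\vft(z,w) = \exp(\log(z)\ad\xi + \log(w)\ad\bar\xi)$ with $Y = \xi+\bar\xi$ and $i\phi = \xi-\bar\xi$. Since each $N_j \in I^{-1,-1}$ we have $[N_j,\xi] = N_j = [N_j,\bar\xi]$, hence $[N_j,\phi] = 0$ for every $j$, which yields the sharpened containment
\[
\phi \in (\lz(\sigma) \cap E(0))_\RR,
\]
while \eqref{eqn becomes} holds unchanged. The $\QQ$-algebraic-group closure of the $Z(\sigma)(\RR)$-conjugates of $\vft(\CC^*\times\CC^*)$ in $\tilde{Z}(\sigma)(\CC)$ is then computed infinitesimally as in \S 4: varying a $Y$-eigenbasis $\{\gamma_i\}\subset \lm_\sigma$ already fills out $(\lm_\sigma+\langle Y\rangle)_\CC$ through the action on $Y$ alone (Kostant (c)), while varying $\gamma \in (\lz(\sigma)\cap E(0))_\RR$ produces, modulo $\lm_\sigma+\langle Y\rangle$, the $\QQ$-Lie-algebra closure $\lg_{B(\sigma)}$ of $\sum_k \ad((\lz(\sigma)\cap E(0))_\RR)^k\langle\phi\rangle_\CC$. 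This gives a short exact sequence $0 \to \lm_\sigma \to \lm_{B^\RR(\sigma)} \to \lg_{B(\sigma)} \to 0$, and the passage from $\tilde{B}^\RR(\sigma)$ to $\tilde{B}(\sigma)$ via the $M_\sigma(\CC)$-action is unchanged from \S 4 and does not enlarge the $\QQ$-closure. Conclusions (A)--(C), together with the analogues of Propositions \ref{prop a} and \ref{prop b}, then follow; (C) identifies $Gr_0^W M_{B(\sigma)}$ with the $\QQ$-closure of $\vf_{\text{split}}$ inside $Z(\sigma)/M_\sigma$, which by (c) may be read off from its action on $\oplus_{k\geq 0}P_k$.

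The main obstacle is verifying that $\phi$ commutes with every individual $N_j$ rather than only with $N$; this is the single step where the rank-one argument does not transfer verbatim, and it is precisely the reason Hodge-tensor input (b) is indispensable. Once this is in hand, the remaining structural ingredients --- the $\sl$-isotypic decomposition, Kostant--Schmid, the Deligne $\delta$-splitting, and the tangent-space identification of $\tilde{B}^\RR(\sigma)$ --- transfer from \S\S3--4 with only cosmetic modifications.
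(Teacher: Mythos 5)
Your route is the paper's: reduce to the $\RR$-split case via Deligne's $\delta$, rationalize to a $\QQ$-split LMHS with $\sl$-triple $(Y,N_{+},N)$, and rerun the infinitesimal computation of $\S$4, the genuinely new input being that $\phi$ commutes with every $N_{j}$ because the $N_{j}$ lie in $I^{-1,-1}(\lm_{\CC})$ (equivalently are $(-1,-1)$-morphisms of the LMHS), forcing $\phi\in\left(\lz(\sigma)\cap E(0)\right)_{\RR}$. That is exactly the pivot of the paper's argument, and your verification of it (via $[N_{j},\xi]=N_{j}=[N_{j},\bar{\xi}]$) is correct, as are the $\delta$-step, the tangent-space identification $T_{p}\tilde{B}(\sigma)\cong\lz(\sigma)_{\CC}/F^{0}\lz(\sigma)$, and the closing orbit computation.

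There is, however, a gap at the rationalization step, which is in fact a second place where the rank-one argument does not transfer verbatim. You assert that ``the proof of Lemma \ref{lemma3} produces a $\QQ$-split element of $\tilde{B}^{\RR}(\sigma)$,'' but that proof, run with the single nilpotent $N=\sum_{j}N_{j}$, only yields $g\in M_{N}(\RR)$ with $g.Y_{0}\in\lm_{\QQ}$; such a $g$ need not commute with the individual $N_{j}$, so $g.F_{0}^{\bullet}$ satisfies condition (ii) of \eqref{eqn hatch} for the ray through $N$ but not necessarily for the cone --- i.e.\ it may leave $\tilde{B}^{\RR}(\sigma)$ altogether. What is required, and what occupies the first half of the paper's proof, is the simultaneous (cone) analogue of Kostant--Schmid: by \cite[(3.3)]{CK} one has $Y_{0}\in\text{im}(\ad\hat{N}_{j})$ and $[Y_{0},\hat{N}_{j}]=-2\hat{N}_{j}$ for every $j$, and the three sets \eqref{eqn!1}, \eqref{eqn!2}, \eqref{eqn!3} coincide; since \eqref{eqn!1} is cut out by $\QQ$-linear conditions it contains rational points, and the equality with \eqref{eqn!3} shows any such point is $\Ad g.Y_{0}$ with $g\in M_{\sigma}(\RR)$, so that $g.F_{0}^{\bullet}$ stays in $\tilde{B}^{\RR}(\sigma)$ and is $\QQ$-split. (The same equalities also give $\Ad M_{\sigma}(\RR).\langle Y\rangle=\langle Y\rangle+\lm_{\sigma,\RR}$ and $\Ad M_{\sigma}(\RR).\langle\phi\rangle\subset\lm_{\sigma,\RR}$; for the orbit computation itself your direct eigenbasis calculation with $\gamma_{i}\in\lm_{\sigma}\cap E(k_{i})$ suffices, so the appeal to ``Kostant (c)'' is harmless there --- but the existence of a $\QQ$-split point cannot be waved through Lemma \ref{lemma3} and needs the simultaneous statement.) With that inserted, your argument coincides with the paper's.
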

\begin{proof}
Let $\left(F^{\bullet},W(\sigma)_{\bullet}\right)\in\tilde{B}^{\RR}(\sigma)$
with corresponding representation $\vft:\mathbb{S}\to\tilde{Z}(\sigma)$
(defined over $\RR$) and Deligne splitting $\ad Y$, $Y\in\lm_{\RR}$ (cf. the proof of Lemma \ref{lemma3}).
From the above discussion and \cite[(3.3)]{CK}, we have for each $i$
\[
\left\{ \begin{array}{c}
Y\in\text{im}(\ad\hat{N}_{i})\\
{}[Y,\hat{N}_{i}]=-2\hat{N}_{i}
\end{array}\right..
\]
From $\S1$ (and noting $\lm_{\sigma}=\cap_{i}\lm_{\hat{N}_{i}}$,
$M_{\sigma}=\cap_{i}M_{\hat{N}_{i}}$), we obtain that the subsets
of $\lm_{\RR}$ given by \begin{equation}\label{eqn!1}\left\{ \left.x\in\lm_{\RR}\;\right|\;[x,\hat{N}_{i}]=-2\hat{N}_{i},\, x\in\text{im}(\ad\hat{N}_{i})\;(\forall i)\right\} \end{equation}
\begin{equation}\label{eqn!2}\bigcap_{i}\left\{ Y+\lm_{\hat{N}_{i},\RR}\right\} =Y+\lm_{\sigma,\RR}\end{equation}
\begin{equation}\label{eqn!3}\bigcap_{i}\left\{ \left(\Ad M_{\hat{N}_{i}}(\RR)\right).Y\right\} =\left(\Ad M_{\sigma}(\RR)\right).Y\end{equation}
are equal. Clearly \eqref{eqn!1} is defined over $\QQ$, so the existence
of a $\QQ$-split LMHS goes through as in Lemma \ref{lemma3}, and
we henceforth assume $Y\in\lm_{\QQ}$ ($\implies\vft$ and $(F^{\bullet},W(\sigma)_{\bullet})$
are $\QQ$-split).

The formula \eqref{eqn becomes} for $\vft$ holds as before, with
the following difference: since the $\hat{N}_{i}$ all polarize $(F^{\bullet},W(\sigma)_{\bullet})$,
they produce $(-1,-1)$-morphisms of HS from each $E(k)$ to $E(k-2)$.
Hence they commute with $\phi$ and we have 
\[
\phi\in\left(\lz(\sigma)\cap E(0)\right)_{\RR}=\lg_{\sigma,\RR}\,\left(\overset{\cong}{\to}(\lz(\sigma)/\lm_{\sigma})_{\RR}\right).
\]
Using the equality of \eqref{eqn!2} and \eqref{eqn!3} we have 
\[
\Ad M_{\sigma}(\RR).\langle Y\rangle=\langle Y\rangle+\lm_{\sigma,\RR}
\]
\[
\Ad M_{\sigma}(\RR).\langle\phi\rangle\subset\lm_{\sigma,\RR}
\]
\[
\left\{ \ad\lg_{\sigma}\right\} \langle Y\rangle=\{0\}
\]
whereupon the remainder of the proof of Theorem \ref{thm1} goes through
with superficial changes.

Finally, the subquotients in (B) are abelian simply because the commutator of $W_{-k}M_{\sigma}$ and $W_{-\ell}M_{\sigma}$ belongs to $W_{-(k+\ell)}M_{\sigma}$ (due to compatibility of the grading and the bracket).  Since $W_{-k}M_{\sigma}$ is unipotent, $\exp : W(\sigma)_{-k}\lm_{\sigma}\to W_{-k}M_{\sigma}$ is an isomorphism of affine algebraic varieties, and its composition with $W_{-k}M_{\sigma} \twoheadrightarrow Gr^W_{-k}M_{\sigma}$ is visibly a homomorphism (as $Gr^W_{-k}M_{\sigma}$ is abelian) with kernel $W(\sigma)_{-(k+1)}\lm_{\sigma}$. 
\end{proof}

\section{Interlude on Kato-Usui spaces}

In the remainder of this paper we shall be concerned with the structure
of boundary components $B(\sigma)$ and certain quotients thereof.
The purpose of this section is to offer the reader a glimpse of how
these quotients fit into partial compactifications of quotients $\Gamma\backslash D_{M}$.
The material surveyed here is done only for period domains in \cite{KU},
but extends in a straightforward way to the more general Mumford-Tate
domain setting.

Let $\Sigma$ be a \emph{fan} in $\lm$, that is, a \emph{set} of
(finitely generated, convex) rational nilpotent cones in $\lm$ intersecting
in faces, which is \emph{closed} under the operation of taking faces.
We define
\[
D_{M,\Sigma}:=\coprod_{\sigma\in\Sigma}\left\{ \left.Z\subset\check{D}_{M}\;\right|\; Z\text{ is a }\sigma\text{-nilpotent orbit}\right\} 
\]
\[
=\coprod_{\sigma\in\Sigma}B(\sigma),\mspace{200mu}
\]
noting that this always contains $B(\{0\})=D_{M}$. In particular,
we shall write
\[
D_{M,\sigma}:=D_{M,\{\text{faces of }\sigma\}},
\]
which in the rank one case $\sigma=\QQ_{\geq0}\langle N\rangle$ is
nothing but $D_{M}\amalg B(N)$.

Next take $\Gamma\subset M(\ZZ)$ to denote a neat%
\footnote{Recall that this means the subgroup of $\CC^{*}$ generated by the
eigenvalues of the elements of $\Gamma$ acting on $\lm$ is torsion-free.
Using the Jordan decomposition for $M$, it follows that the action
on any tensor space $T^{a,b}\lm$ (and its subquotients) is also free
of torsion eigenvalues. Neat subgroups of finite index always exist
in $M(\ZZ)$ \cite[Prop. 17.6]{Bo}.} subgroup of finite index, and consider the monoid
\[
\Gamma(\sigma):=\Gamma\cap\exp(\sigma_{\RR})
\]
with group-theoretic closure $\Gamma(\sigma)^{\text{gp}}$. We assume
that $\Gamma$ is strongly compatible with $\Sigma$, i.e.
\begin{enumerate}
\item $\Ad(\gamma).\sigma\in\Sigma$ ($\forall$$\gamma\in\Gamma$,$\sigma\in\Sigma$)
\item $\sigma_{\RR}=\RR_{\geq0}\langle\log\Gamma(\sigma)\rangle$.
\end{enumerate}
By (1), $Z\mapsto\gamma.Z$ induces maps $B(\sigma)\mapsto B(\Ad(\gamma).\sigma)$
and so the quotient $\Gamma\backslash D_{\Sigma}$ makes sense on
the set-theoretic level. 

The essence of Theorem A of \cite{KU} can be transcribed as follows: 
\begin{thm}
\label{thm Ku}$\Gamma\backslash D_{M,\Sigma}$ admits the structure of a logarithmic manifold,
which is Hausdorff in the strong topology. For each $\sigma\in\Sigma$, the natural map
\[
\Gamma(\sigma)^{\text{gp}}\backslash D_{M,\sigma}\;\overset{\Upsilon_{\sigma}}{\longrightarrow}\;\Gamma\backslash D_{M,\Sigma}
\]
is open and locally an isomorphism.
\end{thm}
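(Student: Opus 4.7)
The plan is to construct explicit local coordinate charts around each boundary point and to verify the log-manifold axioms, strong-topological Hausdorffness, and the openness/local-isomorphism property of $\Upsilon_\sigma$, adapting the period-domain arguments of \cite{KU}. A first reduction is to embed $D_M$ into an ambient period domain: fix a faithful representation $M\hookrightarrow \text{Aut}(V,Q)$ carrying a polarized Hodge structure compatible with $\vf_M$, and check that a flag $F^\bullet$ lies in $\tilde{B}(\sigma)$ for $D_M$ iff the associated flag on $V$ lies in the corresponding $\tilde{B}(\sigma)$ for the ambient period domain. This identifies $D_{M,\Sigma}$ with the closed log-analytic subspace of the ambient Kato-Usui space cut out by $M$-invariant Hodge tensors, allowing direct import of the constructions of \cite{KU}.

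At a point $e^{\langle\sigma\rangle_\CC}.F_0^\bullet \in B(\sigma)$, the standard local chart is built by choosing $N_1,\ldots,N_r \in \log\Gamma(\sigma)$ generating $\log\Gamma(\sigma)^{\text{gp}} \cong \ZZ^r$ (which is possible by the strong compatibility hypothesis (2)), and using the holomorphic extension
\[
(q_1,\ldots,q_r,\,F^\bullet) \;\longmapsto\; e^{\frac{1}{2\pi i}\sum_j (\log q_j) N_j}.F^\bullet \;\in\; \check{D}_M,
\]
defined on a neighborhood of $(0,\ldots,0,F_0^\bullet)$ in $\Delta^r \times \check{D}_M$. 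Schmid's nilpotent orbit theorem guarantees this extends across $\{q_1\cdots q_r = 0\}$ and sends the divisor to $B(\sigma)$; quotienting by the action $q_j \mapsto e^{2\pi i z_j} q_j$ of $\Gamma(\sigma)^{\text{gp}}$ yields the local structure on the left-hand side of $\Upsilon_\sigma$. Face-compatibility of $\Sigma$ and strong compatibility (1) produce compatible charts for faces $\tau \leq \sigma$, gluing to a log-manifold structure on $\Gamma\backslash D_{M,\Sigma}$; neatness of $\Gamma$ ensures the $\Gamma$-stabilizer of a $\sigma$-nilpotent orbit is exactly $\Gamma(\sigma)^{\text{gp}}$, whence $\Upsilon_\sigma$ is injective on fibers and thus a local isomorphism onto its (automatically open) image.

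The main obstacle is the Hausdorff property in the strong topology. The issue is to rule out the pathological scenario in which a single sequence $\{[F_n^\bullet]\} \subset \Gamma\backslash D_M$ converges to two distinct boundary classes, equivalently, that sequences $F_n^\bullet \to F_\infty^\bullet \in \tilde{B}(\sigma)$ and $\gamma_n F_n^\bullet \to {F'_\infty}^\bullet \in \tilde{B}(\sigma')$ force $\gamma_n \in \Gamma(\sigma)^{\text{gp}}$ eventually (with $\sigma' = \Ad(\gamma_n).\sigma$ for large $n$). Establishing this requires the several-variable $SL_2$-orbit theorem of Cattani-Kaplan-Schmid, which provides the asymptotic normal form for VHS approaching a boundary stratum and constrains the admissible $\{\gamma_n\}$ via their action on the limiting $W(\sigma)_\bullet$ and on the associated $\sl$-triples furnished by Jacobson-Morosov. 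This is the sole deep analytic input; once it is in place the remaining assertions -- log-manifold axioms, openness, and local isomorphism -- reduce to routine toric-geometric bookkeeping on the monoid $P(\sigma) = (\log\Gamma(\sigma)^{\text{gp}})^\vee \cap \sigma^\vee$, exactly as in \cite[\S7-8]{KU}.
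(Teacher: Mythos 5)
This theorem is not proved in the paper at all: it is quoted verbatim from Kato--Usui, the only contribution of the paper being the (stated, not argued) observation that the constructions and proofs of \cite{KU}, written for period domains, carry over without change to Mumford--Tate domains. Your proposal instead tries to re-derive the main theorem of \cite{KU} in a page, and as written it has concrete gaps. First, the chart construction is incorrect as stated: the map $(q_{1},\dots,q_{r},F^{\bullet})\mapsto e^{\frac{1}{2\pi i}\sum_{j}\log(q_{j})N_{j}}.F^{\bullet}$ has an essential singularity along $\{q_{1}\cdots q_{r}=0\}$ and does \emph{not} extend holomorphically to $\check{D}_{M}$; there is no appeal to the nilpotent orbit theorem that makes it do so. What Kato--Usui actually do (and what $\S$6 of the paper recalls in the rank-one case) is to introduce the torsor $E_{\sigma}\overset{\Theta_{\sigma}}{\twoheadrightarrow}\Gamma(\sigma)^{\text{gp}}\backslash D_{M,\sigma}$, where the value over a point of the divisor is \emph{defined} to be the nilpotent-orbit class rather than obtained as a limit in $\check{D}_{M}$. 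Moreover $E_{\sigma}$ is in general not open in $\CC^{r}\times\check{D}_{M}$, and in the non-classical case $\Gamma\backslash D_{M,\Sigma}$ is not locally compact and is not an analytic space; this is precisely why the new category of logarithmic manifolds and the strong topology are needed. So the remaining work is not ``routine toric-geometric bookkeeping'' -- it is most of the content of \cite{KU}.

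Second, the local-isomorphism claim for $\Upsilon_{\sigma}$ does not follow from neatness: neatness does not give that the $\Gamma$-identifications near a boundary point lie in $\Gamma(\sigma)^{\text{gp}}$ (the stabilizer in $\Gamma$ of a $\sigma$-nilpotent orbit can a priori be larger, and the relevant statement is about \emph{nearby} points, i.e.\ a properness-type assertion for the action in the strong topology). In \cite{KU} both this and Hausdorffness are established via the several-variable $SL_{2}$-orbit theorem together with the auxiliary spaces of $SL_{2}$-orbits and Borel--Serre-type spaces; you correctly identify the $SL_{2}$-orbit theorem as the deep input for Hausdorffness, but the reduction you state (two limiting boundary classes force $\gamma_{n}\in\Gamma(\sigma)^{\text{gp}}$ eventually) is itself the hard step, not a consequence you may assume. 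Finally, your opening reduction to an ambient period domain leaves real compatibility issues unaddressed: the asserted ``iff'' for $\tilde{B}(\sigma)$ requires knowing that a flag in $\check{D}_{M}$ whose translates $e^{\sum\tau_{j}N_{j}}F^{\bullet}$ land in the ambient $D$ for $\Im(\tau_{j})\gg0$ actually lands in $D_{M}$ (and in the right connected component); one must also extend $\Sigma$ to, or embed it in, a fan strongly compatible with an ambient arithmetic group, and check that the induced strong topology and log structure agree with the intrinsic ones. The route the paper (implicitly) takes -- that the proofs of \cite{KU} use only the $G(\RR)$-homogeneous structure of $D$ and the Cattani--Kaplan--Schmid asymptotics, hence apply verbatim to $D_{M}$ -- avoids all of these issues, whereas your embedding strategy creates them without resolving them.
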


Let $\Gamma_{\sigma}\subseteq\Gamma$ denote the largest subgroup
stabilizing $\sigma$ (viz., $Ad(\gamma).\sigma\subseteq\sigma$).
Noting that $\Gamma(\sigma)^{\text{gp}}$ acts trivially on $B(\sigma)$,\footnote{Since $\Gamma(\sigma)^{gp}\leq (e^{\sigma_{\mathbb{R}}})^{gp}=e^{\langle\sigma\rangle_{\mathbb{R}}} \leq e^{\langle\sigma\rangle_{\mathbb{C}}}$, this is an immediate consequence of the definition \eqref{p23**} of $B(\sigma)$ ($e^{\langle\sigma\rangle_{\mathbb{C}}}$ already having been quotiented out).}
we have 
\[
B(\sigma)\;\subset\;\left(\Gamma(\sigma)^{\text{gp}}\backslash D_{M,\sigma}\right)
\]
with image \begin{equation}\label{eqn*}\overline{B(\sigma)} \; :=\; \Gamma _{\sigma} \backslash B(\sigma) \end{equation}
under $\Upsilon_{\sigma}$. It is these boundary component quotients
which will be of particular interest in $\S7$.

For the proof of Theorem \ref{thm Ku}, we prefer to drop (as before) the subscript $M$, and shall instead denote the objects pertaining
to the ambient period domain $\mathbf{D}=G(\mathbb{R})/H$ ($G=Aut(\lm,B)$) in boldface.  In \cite{KU}, Theorem 6.1 is proved for $\Gamma\backslash \mathbf{D}$,\footnote{Note that \cite{KU} imposes no finite-index assumption on $\Gamma$; in particular, we can take $\Gamma\leq M(\mathbb{Z}) \leq G(\mathbb{Z})$ (and $\Sigma$ to consist of cones $\sigma\subset \lm\subseteq \mathfrak{g}$).} using properties of the fundamental diagram which connects various enlargements of $\mathbf{D}$ to the Borel-Serre space attached to the set of all maximal compact subgroups of $G(\mathbb{R})$.  Though rehashing this theory for Mumford-Tate domains would undoubtedly be possible (for instance, it can be shown that the multivariable $SL_2$-orbit theory of \cite{CKS} extends to this setting), the proof we offer here requires constructing only $D_{\Sigma}^{\#}$ and the torsors $E_{\sigma}$.  (These torsors, which provide the ``glue'' between the M-T-domain and boundary-component quotients, will be valuable for extending Carayol's construction of Fourier coefficients \cite{C} in anticipated future work.)

While we lack the space here to introduce all the concepts and notation required in the proof, we have written it to be accessible after a reading of \cite[Chap. 0]{KU}, with a copy of that book at hand.  It is not used in the rest of the paper.

\begin{proof}[Proof of Theorem \ref{thm Ku}]
We start with a brief outline.  The (possibly nonsimplicial) cone $\sigma$ gives rise to a toric variety $\text{toric}_{\sigma}$. We will define $E_{\sigma}$ as a subset of $\text{toric}_{\sigma}\times \check{D}$, and show it is a $\langle \sigma \rangle_{\CC}$-torsor over $\Gamma(\sigma)^{gp}\backslash D_{\sigma}$. (The latter is the disjoint union of quotients of $D$ and the $B(\tau)$, as $\tau$ runs over the nontrivial faces of $\sigma$.)  We then use the strong topology of $E_{\sigma}$ in $\text{toric}_{\sigma}\times \check{D}$ to topologize $\Gamma(\sigma)^{gp}\backslash D_{\sigma}$, and the canonical log structure on $\text{toric}_{\sigma}$ to put the structure of a log manifold on $E_{\sigma}$ (and thus on  $\Gamma(\sigma)^{gp}\backslash D_{\sigma}$).  This involves checking that $E_{\sigma}$ is an open subset of the vanishing locus of a finite collection of logarithmic 1-forms.  To obtain the statements involving $D_{\Sigma}$, we pass to its real blow-up $D_{\Sigma}^{\#}$ and follow the topological arguments in \cite{KU}, avoiding some of their complexity by using the results already proved in [op. cit.] for the ambient period domain.

To begin the proof itself, the first step is to construct the torsors as \emph{sets}.  Associated to $q\in \text{toric}_{\sigma}:=\text{Spec}(\mathbb{C}[\Gamma (\sigma)^{\vee}])_{an}$ is a face $\sigma(q)$ of $\sigma$ \cite[3.3.2]{KU}, and a class $[q]\in\langle\sigma\rangle_{\CC} \slash \left( \langle \sigma (q)\rangle_{\CC} + \log (\Gamma(\sigma)^{gp})\right)$ \cite[3.3.5]{KU} any lift of which to $\langle \sigma \rangle_{\CC}$ we denote by $\log_{\sigma}(q)$.  (For example, if $q_0 = (q_0^{(1)},\ldots ,q_0^{(r)} )\in(\CC^*)^r\subset \text{toric}_{\sigma}$, then $\sigma (q_0)=\{0\}$; and if furthermore $\sigma = \QQ_{\geq 0}\langle N_1,\ldots ,N_r\rangle$ is simplicial and $\log(\Gamma(\sigma)^{gp})=\ZZ\langle N_1,\ldots ,N_r\rangle$, then we have $\log_{\sigma}(q)=\frac{1}{2\pi i}\sum_{j=1}^r \log(q_0^{(j)})N_j$.)  Now define\footnote{for the ambient period domain case one also has $\check{\mathbf{E}}_{\sigma}:=\text{toric}_{\sigma}\times \check{\mathbf{D}}$ and so forth, as in \cite{KU}.}
\begin{flalign*}
\; \; & \check{E}_{\sigma} := \text{toric}_{\sigma}\times \check{D} &\\
\; \; & \tilde{E}_{\sigma} := \left\{ \left. (q,F^{\bullet})\in\check{E}_{\sigma} \right| N(F^{\bullet})\subset F^{\bullet -1}\; \forall N\in \sigma(q) \right\} & \\
\; \; & E_{\sigma} := \left\{ \left. (q,F^{\bullet})\in \check{E}_{\sigma} \right| e^{\langle \sigma(q)\rangle_{\CC}} e^{\log_{\sigma}q} F^{\bullet} \;\text{is a}\; \sigma(q)\text{-nilpotent orbit}\right\} &
\end{flalign*}
and note that since $\sigma(q)$ runs over all faces of $\sigma$, the natural map
\[
\Theta_{\sigma}:\, E_{\sigma} \to \underset{\underset{\text{face}}{\tau\subseteq \sigma}}{\coprod}\Gamma(\sigma)^{gp}\backslash B(\tau) = \Gamma(\sigma)^{gp}\backslash D_{\sigma}
\]
is surjective.  In fact, the action of $\langle\sigma\rangle_{\CC}$ on $E_{\sigma}$ by $\left( a,(q,F^{\bullet})\right)\mapsto\left( e(a)\cdot q,e^{-a}F^{\bullet}\right)$ ($e(a)\in(\CC^*)^r$) is free (the proof of \cite[7.2.9]{KU} applies verbatim), and $\Theta_{\sigma}$ is the quotient by this action.

We shall also need the set
\[
D_{\Sigma}^{\#} :=\underset{\sigma \in \Sigma}{\coprod} e^{i\langle \sigma\rangle_{\RR}}\backslash \tilde{B}(\sigma) \; \left( \twoheadrightarrow D_{\Sigma} \right)
\]
of ``$\sigma$-nilpotent $i$-orbits'' in $\check{D}$, and the corresponding ``torsor''.  Writing $|\text{toric}|_{\sigma}$ for the analytic closure of $(\RR_{>0})^r$ in $\text{toric}_{\sigma}$ (and noting that for $q\in |\text{toric}|_{\sigma}$ we may take $\log_{\sigma}(q)\in i\langle \sigma \rangle_{\RR}$), we may define $\check{E}_{\sigma}^{\#} :=|\text{toric}|_{\sigma}\times \check{D}$ and (just as above) the quotient
\[
\Theta_{\sigma}^{\#} :\, E_{\sigma}^{\#} :=\check{E}_{\sigma}^{\#}\cap E_{\sigma} \twoheadrightarrow D_{\sigma}^{\#}
\]
by a free $i\langle \sigma\rangle_{\RR}$-action, which fits together with $\Theta_{\sigma}$ in a commuting square.

The next step is to introduce topologies on each space, and log structures (for which we refer to \cite[Chap. 2]{KU}) on some.  Given a subset $\mathcal{S}\overset{\imath}{\hookrightarrow}\mathrm{X}$ of a (complex) analytic space $\mathrm{X}$, the \emph{strong topology} of $\mathcal{S}$ in $\mathrm{X}$ is the finest topology on $\mathcal{S}$ for which any map $\lambda:\mathrm{Y}\to\mathcal{S}$ ($\mathrm{Y}$ analytic) with $\imath\circ\lambda$ analytic is continuous.  With the usual analytic topology on $\check{E}_{\sigma}$, we place the strong topology (in $\check{E}_{\sigma}$) on $\tilde{E}_{\sigma}$ and $E_{\sigma}$, and topologize $E_{\sigma}^{\#}$ as a subspace of $E_{\sigma}$ and $\Gamma(\sigma)^{gp}\backslash D_{\sigma}$ [resp. $D_{\sigma}^{\#}$] as a quotient of $E_{\sigma}$ [resp. $E_{\sigma}^{\#}$]. On $\Gamma \backslash D_{\Sigma}$ [resp. $D_{\Sigma}^{\#}$] we place the strongest topology for which the inclusions of $\Gamma(\sigma)^{gp}\backslash D_{\sigma}$ [resp. $D_{\sigma}^{\#}$] are continuous ($\forall\sigma$).  Since $E_{\sigma}^{\#}\to E_{\sigma}$ is continuous by definition, so is $D_{\Sigma}^{\#}\twoheadrightarrow \Gamma\backslash D_{\Sigma}$.

The toric variety $\text{toric}_{\sigma}$ has a canonical fs logarithmic structure associated to the divisor $\Delta_{\sigma}:=\text{toric}_{\sigma}\setminus (\CC^*)^r$.  We obtain (inverse image) log structures on $E_{\sigma}$, $\tilde{E}_{\sigma}$, $\check{E}_{\sigma}$ using $E_{\sigma}\subseteq \tilde{E}_{\sigma}\subseteq \check{E}_{\sigma} \twoheadrightarrow \text{toric}_{\sigma}$.  Now a logarithmic manifold \cite[3.5.7]{KU} is (locally) the vanishing locus\footnote{in the sense of pointwise pullback; the pullback of a log differential form (in the Kato-Usui sense) to a point in the support of the log structure (like $\Delta_{\sigma}\subset \text{toric}_{\sigma}$) \emph{need not be zero}.} of a set of logarithmic differential 1-forms $\omega^1$ on an fs log analytic space.  For the ambient period domain, \cite[3.5.10]{KU} shows that $\tilde{\mathbf{E}}_{\sigma}\subset \check{\mathbf{E}}_{\sigma}$ is (locally) cut out in this way by a finite number of sections of the subsheaf $\omega^1_{\text{toric}_{\sigma}} \otimes \mathcal{O}_{\check{\mathbf{E}}_{\sigma}}\langle \sigma \rangle_{\CC} \subset \omega^1_{\check{\mathbf{E}}_{\sigma}}$. Since the restriction of this subsheaf to $\check{E}_{\sigma}$ injects into $\omega^1_{\check{E}_{\sigma}}$, applying the proof of [loc. cit.] verbatim shows that $\tilde{E}_{\sigma} = \check{E}_{\sigma} \cap \tilde{\mathbf{E}}_{\sigma}$ is a logarithmic manifold.  (This is perhaps the crucial step.)

Now we shall apply repeatedly the following continuity criterion ({\bf CC}): given subsets $\mathcal{S}_i \overset{\imath_i}{\subset} \mathrm{X}_i$ ($i=1,2$) of analytic spaces, endowed with the strong topology, a map $\mathcal{S}_1 \overset{f}{\to} \mathcal{S}_2$ is continuous if for any $\lambda:\mathrm{Y}\to \mathcal{S}$ ($\mathrm{Y}$ analytic) with $\imath_1\circ \lambda$ analytic, we have  $\imath_2\circ f\circ\lambda$ analytic. So for example $\tilde{E}_{\sigma}\hookrightarrow \tilde{\mathbf{E}}_{\sigma}$ is continuous (by {\bf CC}); and since $\mathbf{E}_{\sigma}\subset \tilde{\mathbf{E}}_{\sigma}$ is open \cite[$\S$7.1]{KU}, $E_{\sigma}=\tilde{E}_{\sigma}\cap\mathbf{E}_{\sigma}$ is open in $\tilde{E}_{\sigma}$ (in the strong topology), hence a logarithmic manifold.  The action of $\langle\sigma\rangle_{\CC}$ on $E_{\sigma}$ is continuous (by {\bf CC}) and $E_{\sigma}\,(\subset \check{E}_{\sigma})$ is obviously Hausdorff, so by \cite[7.2.6(ii)]{KU} the properness of the action of $\langle\sigma\rangle_{\CC}$ on $\mathbf{E}_{\sigma}$ \cite[7.2.2(i)]{KU} implies the same for $E_{\sigma}$.  We conclude that
\begin{flalign}\label{6A}
\; \; & \Gamma(\sigma)^{gp}\backslash D_{\sigma}\, \left( = \langle\sigma\rangle_{\CC}\backslash E_{\sigma} \right)\;\text{is Hausdorff.}&
\end{flalign}
Following \cite[7.3.5,7]{KU} verbatim (in the M-T setting) yields:
\begin{flalign}\label{6B}
\;\; &\Gamma(\sigma)^{gp}\backslash D_{\sigma}\;\text{is a log manifold}&
\end{flalign}
(with $E_{\sigma}$ a $\langle\sigma\rangle_{\CC}$-torsor over it in the category of log manifolds); and
\begin{flalign}\label{6C}
\;\; &\left( \Gamma(\sigma)^{gp}\backslash D_{\sigma}\right)^{\log}\cong\Gamma(\sigma)^{gp}\backslash D_{\sigma}^{\#}.&
\end{flalign}
(In \eqref{6C}, $X^{\log}$ denotes a real blowup of a log manifold $X$ along the support of the log structure, cf. \cite[2.2.3]{KU}; the main point is that $X^{\log}\twoheadrightarrow X$ is proper.)  Moreover, the inclusion $D_{\Sigma}^{\#}\hookrightarrow \mathbf{D}_{\Sigma}^{\#}$ inherits continuity from $E_{\sigma}\hookrightarrow \mathbf{E}_{\sigma}$ (because of how we defined the topologies), and so $\mathbf{D}_{\Sigma}^{\#}$ Hausdorff $\implies$
\begin{flalign}\label{6D}
\;\; &D_{\Sigma}^{\#}\;\text{is Hausdorff.}&
\end{flalign}
Using the {\bf CC}, we easily check continuity of the maps $E_{\sigma}\underset{\cong}{\to}E_{(\text{ad}\gamma)\sigma}$ induced by $\gamma\in\Gamma$, hence of the action of $\Gamma$ on $D_{\Sigma}^{\#}$.  Applying \cite[7.4.2(ii)]{KU} (properness of $\Gamma$'s action on $\mathbf{D}_{\Sigma}^{\#}$), \cite[7.2.6(ii)]{KU}, and \eqref{6D} $\implies$ $\Gamma$ acts properly on $D_{\Sigma}^{\#}$ $\implies$
\begin{flalign}\label{6E}
\;\; &\Gamma\backslash D_{\Sigma}^{\#}\;\text{is Hausdorff.}&
\end{flalign}
Employing the arguments in \cite[7.2.5+7.4.4-5]{KU} in the M-T setting together with \eqref{6D} $\implies$
\begin{flalign}\label{6F}
\;\; &D_{\Sigma}^{\#} \twoheadrightarrow \Gamma\backslash D_{\Sigma}^{\#}\;\text{is a local homeomorphism;}&
\end{flalign}
while following \cite[7.4.7-8]{KU} together with \eqref{6C},\eqref{6F} $\implies$
\begin{flalign}\label{6G}
\;\; &\Gamma(\sigma)^{gp}\backslash D_{\sigma} \rightarrow \Gamma \backslash D_{\Sigma}\;\text{is a local homeomorphism}\;(\forall\sigma\in\Sigma ).&
\end{flalign}
Finally \eqref{6B} and \eqref{6G} imply that $\Gamma \backslash D_{\Sigma}$ is a log manifold; while \eqref{6C} and \eqref{6G} $\implies$ $(\Gamma \backslash D_{\Sigma} )^{\log} \cong \Gamma \backslash D_{\Sigma}^{\#}$ $\implies$ $\Gamma\backslash D_{\Sigma}^{\#} \twoheadrightarrow \Gamma \backslash D_{\Sigma}$ proper, which with \eqref{6E} and \cite[7.2.6(i)]{KU} shows $\Gamma \backslash D_{\Sigma}$ is Hausdorff.
\end{proof}

\begin{rem}
\label{rem ku}Kato and Usui do not offer any magic prescription for
how to choose $\Sigma$. (One of their key examples is simply to take
all rays generated by rational nilpotent elements of $\lm$.) Indeed,
it can be tedious to check that a given choice of $\sigma$ (or $N$)
satisfies criterion \eqref{eqn hatch}(ii) in the definition of a
($\sigma$-)nilpotent orbit. We have worked around this issue by assuming
$B(\sigma)\neq\emptyset$ above, but obviously must face it for the
examples in $\S8$. By \cite[(2.3)]{CK}, in order for $B(\sigma)$
to be nonempty, the \emph{necessary} conditions (on $\sigma$) are:\vspace{2mm}

(a) $(\ad N_{j})^{k+1}=0$ $(\forall j)$, where $k$ is the level%
\footnote{Recall that for a weight $n$ HS with Hodge numbers $\{h^{p,n-p}\}_{p\in\ZZ}$,
the \emph{level} is the difference $\max\{p|h^{p,n-p}\neq0\}-\min\{p|h^{p,n-p}\neq0\}$;
in our (adjoint HS) setting, $n=0$ and the level is even.%
} of the Hodge structures (on $\lm$) in $D_{M}$; and\vspace{2mm}

(b) all $N\in\sigma^{\circ}$ define the same $W(N)_{\bullet}$.\vspace{2mm}\\
It turns out (by \cite[Prop. 4.66]{CKS}; see the ``converse to (2.3)'' in \cite{CK}) that
a given $F^{\bullet}$ satisfying \eqref{eqn hatch}(i) gives a $\sigma$-nilpotent
orbit (i.e. \eqref{eqn hatch}(ii) holds) if one has (a),(b), and
\vspace{2mm}

(ii') the pure Hodge structures induced by $F^{\bullet}$ on the%
\footnote{These $\hat{P}_{k}$ (which contain our $P_{k}$) are the more conventional
primitive spaces.%
} 
\[
\hat{P}_{k}:=\ker\left((\ad N)^{k+1}\right)\subset Gr_{k}^{W(N)}\lm
\]
are polarized by the restriction of $B\left((\cdot),(\ad N)^{k}(\cdot)\right)$
(for some $N\in\sigma^{\circ}$).\vspace{2mm}\\
We expect, but have not proved in full, that this only needs to be
checked on the $\{P_{k}\}$ (as defined \emph{for cones} in \eqref{eq*22}). 
\end{rem}
{}
\begin{rem} 
\label{HSD remark}In contrast, consider the case where $D$ is Hermitian symmetric with $F^{-1}\lm_{\CC}=\lm_{\CC}$,\footnote{This encompasses many examples arising from VHS of higher weight, such as the CY variations of \cite{FL}, and includes cases where $\lm$ has type $\mathfrak{e}_6$ or $\mathfrak{e}_7$.} and $\Gamma\leq M(\ZZ)$ is a neat arithmetic subgroup.  Then $\Gamma \backslash D$ is a (connected) Shimura variety, and the work \cite{AMRT} produces a fan $\Sigma$ strongly compatible with $\Gamma$ and a corresponding smooth toroidal compactification $\overline{\Gamma\backslash D}$ of $\Gamma \backslash D$.  The identification of $\overline{\Gamma\backslash D}$ with $\Gamma \backslash D_{\Sigma}$ as constructed above may be seen as follow (with more details in \cite{KP2}).

Let $\mathbf{F}\subset cl(D)\setminus D$ be a Baily-Borel boundary component of $D$. Associated to $\mathbf{F}$ is an open cone $C(\mathbf{F})\subset \lm_{\RR}$ \cite[III.4.1]{AMRT} and a subset $D(\mathbf{F})\subset \check{D}$ \cite[III.4.5]{AMRT}. For any finitely generated rational nilpotent cone $\sigma = \QQ_{\geq 0}\langle N_1,\ldots N_s\rangle $ with $\sigma^{\circ}\subset C(\mathbf{F})$, it is shown in \cite[$\S$5.3]{KP2} that $D(\mathbf{F})=\tilde{B}(\sigma)$.  The strata in $\overline{\Gamma\backslash D}$, which have the structure of (connected) \emph{mixed} Shimura varieties \cite{Mi2,Pi}, are then given by $\Gamma_{\sigma} e^{\langle \sigma \rangle_{\CC}}\backslash D(\mathbf{F})$ ($\sigma \in \Sigma$) \cite[proof of III.5.2]{AMRT}, which is to say $\Gamma_{\sigma}\setminus B(\sigma)$. Hence they are precisely the strata appearing in $\Gamma\backslash D_{\Sigma}$.

Moreover, in this setting (by \cite{Mi2,Pi}) the $\Gamma_{\sigma}\backslash B(\sigma)$ always have canonical models over $\bar{\QQ}$. In the next two sections we shall investigate when this might be so even for $D$ non-Hermitian. 
\end{rem}

\section{Boundary component structure}

We are interested in when $\Gamma_{\sigma}\backslash B(\sigma)$ has
an arithmetic algebraic structure -- i.e., a canonical model over
$\bar{\QQ}$ -- especially in those cases when $\Gamma\backslash D_{M}$
does \emph{not}. In order to uncover this we have to understand the
fibration structure, starting with the prequotient $B(\sigma)$. We
make the slight notational change of letting $\vft_{0}$ (or $(F_{0}^{\bullet},W(\sigma)_{\bullet})$)
denote our $\QQ$-split point, and $\vft$ (or $(F^{\bullet},W(\sigma)_{\bullet})$)
an arbitrary point of $\tilde{B}(\sigma)$. We shall write $I_0^{p,q}(\lm_{\CC})$ [resp. $I^{p,q}(\lm_{\CC})$]
for the Deligne subspaces corresponding to $\tilde{\varphi}_0$ [resp. $\tilde{\varphi}$]. Let $n$ denote the level of the (weight $0$) Hodge structures parametrized by $D_{M}$.

By Proposition \ref{prop b}, together with the faithfulness of the
action of $M_{N}(\RR)$ on $Y_{0}$, we obtain:
\[
\tilde{B}^{\RR}(\sigma)=M_{B(\sigma)}(\RR).\vft_{0}=\left.\left(M_{\sigma}(\RR)\rtimes G_{B(\sigma)}(\RR)\right)\right/K_{\vft_{0}}
\]
where $K_{\vft_{0}}:=\left(G_{B(\sigma)}(\RR)\right)^{\vft_{0}}$;
and
\[
\tilde{B}(\sigma)=\left(M_{\sigma}(\CC)\rtimes G_{B(\sigma)}(\RR)\right).(F_{0}^{\bullet},W(\sigma)_{\bullet})
\]
\[
=\left.\left(M_{\sigma}(\CC)\rtimes G_{B(\sigma)}(\RR)\right)\right/K_{F_{0}^{\bullet}}
\]
where $K_{F_{0}^{\bullet}}=H_{F_{0}^{\bullet}}\rtimes K_{\vft}$ and
$H_{F_{0}^{\bullet}}=\left(M_{\sigma}(\CC)\right)_{F_{0}^{\bullet}}$
(stabilizer of $F_{0}^{\bullet}$). (The action on $W(\sigma)_{\bullet}$
is of course trivial.) Passing to nilpotent orbits, we have
\[
B(\sigma)\;=\; e^{\langle\sigma\rangle_{\CC}}\backslash\tilde{B}(\sigma)\;,\;\;\;\; B^{\RR}(\sigma)\;=\; e^{\langle\sigma\rangle_{\RR}}\backslash M_{B(\sigma)}(\RR)/K_{\vft_{0}}.
\]

The Mumford-Tate \emph{domain} of a generic $\vf_{\text{split}}$,
viz.
\[
D(\sigma):=G_{B(\sigma)}(\RR).(\vf_{0})_{\text{split}}\cong\left.G_{B(\sigma)}(\RR)\right/K_{\vft},
\]
is contained in the product of period domains%
\footnote{More precisely, for a generic $\vf_{\text{split}}$, $D(\sigma)$
surjects onto the MT domains of all the irreducible pure Hodge structures
into which $\vf_{\text{split}}$ decomposes. %
} for the $(P_{k},Q_{k})$. Sending $\vft$ resp. $e^{\langle\sigma\rangle_{\CC}}\vft$
to $\vf_{\text{split}}$ gives bundles \begin{equation}\label{eqn??}\xymatrix{\tilde{B}(\sigma) \ar @{->>} [rr] \ar @{->>} [rd]_{\tilde{\rho}_{\sigma}} & & B(\sigma) \ar @{->>} [ld]^{\rho_{\sigma}} \\ & D(\sigma) } \end{equation}
with fibers through $\vft$ 
\[
\tilde{\mathfrak{F}}_{\vft}=M_{\sigma}(\CC).\left(F^{\bullet},W(\sigma)_{\bullet}\right)\;,\;\;\;\mathfrak{F}_{\vft}=M_{\sigma}(\CC).e^{\langle\sigma\rangle_{\CC}}F^{\bullet}.
\]
To refine \eqref{eqn??}, recall that we have a filtration of the
Lie group
\[
\mathcal{M}:=M_{\sigma}(\CC)\rtimes G_{B(\sigma)}(\RR)
\]
by normal subgroups $W_{-k}\mathcal{M}=W_{-k}M_{\sigma}(\CC)$ ($k\geq1$).
Writing $\mathcal{K}:=K_{F_{0}^{\bullet}}$, set 
\[
\tilde{B}(\sigma)_{(k)}:=\left.\left(\frac{\mathcal{M}}{W_{-(k+1)}\mathcal{M}}\right)\right/\left(\frac{\mathcal{K}}{W_{-(k+1)}\mathcal{K}}\right)
\]
\[
B(\sigma)_{(k)}:=\left\{ \begin{array}{cc}
\tilde{B}(\sigma)_{(1)}\;\;\;,\;\;\; & k=1\\
e^{\langle\sigma\rangle_{\CC}}\backslash\tilde{B}(\sigma)_{(k)}, & k>1
\end{array}\right..
\]
The fibration $\tilde{\rho}_{\sigma}$ then factors
\[
\tilde{B}(\sigma)\twoheadrightarrow\cdots\sur\tilde{B}(\sigma)_{(k)}\underset{\tilde{\rho}_{\sigma}^{(k)}}{\sur}\tilde{B}(\sigma)_{(k-1)}\sur\cdots\sur\tilde{B}(\sigma)_{(1)}\underset{\tilde{\rho}_{\sigma}^{(1)}}{\sur}D(\sigma),
\]
with fibers (of $\tilde{\rho}_{\sigma}^{(k)}$, through $\vft$) \begin{equation}\label{eqn???}\tilde{\mathfrak{F}}_{\vft}^{(k)}=\frac{Gr_{-k}^{W}\mathcal{M}}{Gr_{-k}^{W}\mathcal{K}}=\frac{Gr_{-k}^{W}M_{\sigma}(\CC)}{Gr_{-k}^{W}H_{F^{\bullet}}}\overset{\cong}{\underset{\exp}{\longleftarrow}}\frac{Gr_{-k}^{W(\sigma)}(\lm_{\sigma,\CC})}{F^{0}Gr_{-k}^{W(\sigma)}(\lm_{\sigma,\CC})},\end{equation}
in which the right-hand side renders the abelian structure of $\tilde{\mathfrak{F}}_{\tilde{\varphi}}^{(k)}$ visible (cf. Theorem \ref{mainthm}(B)). The version with
tildes removed only differs from \eqref{eqn???} at $k=2$: \begin{equation}\label{eqn????}\mathfrak{F}_{\vft}^{(2)}=\frac{Gr_{-2}^{W(\sigma)}(\lm_{\sigma,\CC})}{F^{0}\{Gr_{-2}^{W(\sigma)}(\lm_{\sigma,\CC})\}+\langle\sigma\rangle_{\CC}}.\end{equation}
In \eqref{eqn???} and \eqref{eqn????}, $Gr_{-k}^{W(\sigma)}(\lm_{\sigma})$
is a Hodge structure of weight $-k$ and its $F^{0}$ depends on $\vf_{\text{split }}=\tilde{\rho}_{\sigma}(\vft)\in D(\sigma)$.
\begin{rem}
To make all of this more concrete in the rank one case, recall that
\begin{equation}\label{eqn hatch'}\lm=\bigoplus_{\ell=0}^{n}\left(\bigoplus_{j=0}^{\ell}N^{j}\tilde{P}_{\ell}\right).\end{equation}
We may view (for $k\geq1$) \begin{equation}\label{eqn hatch ?}\tilde{\mathfrak{F}}_{\vft}^{(k)}\subseteq\bigoplus_{\ell=0}^{n}\bigoplus_{j=0}^{\ell}\frac{\text{Hom}_{\CC}\left(P_{\ell},N^{j}P_{\ell-k+2j}\right)}{F^{0}\text{Hom}_{\CC}\left(P_{\ell},N^{j}P_{\ell-k+2j}\right)}\end{equation}
as being cut out by (among other equations) polarization conditions.
(For higher rank, $\lm=\bigoplus_{\ell=0}^{n}\left\{ \bigoplus_{j=0}^{\ell}\left(\sum_{i_{1}\leq\cdots\leq i_{j}}N_{i_{1}}\cdots N_{i_{j}}P_{\ell}\right)\right\} $
leads to a slightly more complicated formula.) For the fiber through
$\vft_{0}$, here is how this works: thinking of $g\in W_{-k}\mathcal{M}$
as an automorphism of \eqref{eqn hatch'}, we write $g=\text{id.}+\tilde{g}+\tilde{g}'$,
where
\[
\tilde{g}\in\oplus_{\ell}\text{Hom}_{\CC}\left(E(\ell),E(\ell-k)\right)
\]
\[
\tilde{g}'\in\oplus_{\ell}\text{Hom}_{\CC}\left(E(\ell),W(N)_{\ell-k-1}\lm\right)
\]
and $\tilde{g}$ is determined by its ``components''
\[
\tilde{g}_{(\ell,j)}\in\text{Hom}_{\CC}\left(P_{\ell},N^{j}P_{\ell-k+2j}\right).
\]
Given $\alpha\in\tilde{P}_{\ell}$, $\beta\in\tilde{P}_{\ell-k+2j}$,
and $q\in\ZZ_{\geq0}$ we have 
\[
B(\alpha,N^{q}\beta)=B(g\alpha,gN^{q}\beta)=B(g\alpha,N^{q}g\beta)
\]
\[
=\left\{ B(\alpha,N^{q}\beta)+B(\tilde{g}\alpha,N^{q}\tilde{g}\beta)\right\} +B(\tilde{g}\alpha,N^{q}\beta)+B(\alpha,N^{q}\tilde{g}\beta)
\]
\[
\mspace{200mu}+\left\{ \text{terms involving }\tilde{g}'\right\} .
\]
Choosing $q=\ell-k+j$ forces $B(\alpha,N^{q}\beta)$ and both bracketed
terms to be zero, so that
\[
B(\tilde{g}\alpha,N^{q}\beta)+B(\alpha,N^{q}\tilde{g}\beta)=0.
\]
Since $\lm$ is $N$-polarized (and $B$ pairs $E(k)$ with $E(-k)$),
we conclude that $\tilde{g}_{(\ell,j)}$ and $\tilde{g}_{(\ell-k+2j,k-j)}$
determine one another provided the subscripts are distinct. For $k$
even and $j=\frac{k}{2}$ they coincide, and $B$ directly imposes
conditions on $\tilde{g}_{(\ell,\frac{k}{2})}$:
\[
B(\tilde{g}b,N^{\ell-\frac{k}{2}}b')=-B(b,N^{\ell-\frac{k}{2}}\tilde{g}b')=(-1)^{\ell-\frac{k}{2}+1}B(N^{\ell-\frac{k}{2}}b,\tilde{g}b')
\]
\[
=(-1)^{\ell-\frac{k}{2}+1}B(\tilde{g}b',N^{\ell-\frac{k}{2}}b).
\]
The result is a symmetry (or skew-symmetry, depending on $\ell-\frac{k}{2}+1$)
condition on {[}the matrix entries of{]} $\tilde{g}_{(\ell,\frac{k}{2})}\in\text{Hom}_{\CC}\left(P_{\ell},N^{\frac{k}{2}}P_{\ell}\right).$

In our situation all these constraints (and others, including those
coming from the Lie bracket's status as a Hodge tensor, cf. Remark \ref{rem ?}(a)) are implicit
in, and computed by, the formula on the right-hand side of \eqref{eqn???}.
However, a computation like that above can be valuable for finding
dimensions of boundary components of ordinary period domains without
entering into Hodge structures on Lie algebras.
\end{rem}
We are now ready to consider the (left) quotient by $\Gamma_{\sigma}$
in the context of the iterated fibration above. 
\begin{lem}
\label{lemma a}$\Gamma_{\sigma}\leq M_{B(\sigma)}(\ZZ)$.\end{lem}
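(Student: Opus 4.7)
The plan is to prove $\Gamma_{\sigma}\leq M_{B(\sigma)}(\ZZ)$ in two stages: first establish $\Gamma_{\sigma}\subseteq Z(\sigma)(\ZZ)$ using integrality and neatness of $\Gamma$, then upgrade to membership in $M_{B(\sigma)}$ via the structural decomposition from Theorem \ref{mainthm} and Proposition \ref{prop b}.

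For the first stage, let $\gamma\in\Gamma_{\sigma}$. Since also $\gamma^{-1}\in\Gamma_{\sigma}$, $\Ad\gamma$ restricts to a $\ZZ$-linear bijection $\sigma\to\sigma$ of the convex rational polyhedral cone. Such an automorphism permutes the finitely many extremal rays and rescales their primitive $\ZZ$-generators by positive rational factors; integrality of both $\Ad\gamma$ and $\Ad\gamma^{-1}$ on the $\ZZ$-lattice in $\langle\sigma\rangle$ forces each rescaling to equal $1$. The residual action by permutation of the extremal rays has finite order, so neatness of $\Gamma$ kills it, leaving $\Ad\gamma$ as the identity on every $N_{j}\in\sigma$; hence $\gamma\in Z(\sigma)(\ZZ)$.

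For the second stage, decompose $\gamma=\gamma_{s}\gamma_{u}$ as its Jordan factors, with both in $Z(\sigma)(\QQ)$. The unipotent part $\gamma_{u}$ lies in the unipotent radical of $Z(\sigma)$, which is $M_{\sigma}$ (as in the rank-one proposition at the end of \S2, extended to the cone via Theorem \ref{mainthm}), and $M_{\sigma}\subseteq M_{B(\sigma)}$ by Theorem \ref{mainthm}(B). For the semisimple part, $\gamma$ preserves $\tilde{B}(\sigma)$ (since it preserves $\sigma$ and hence $W(\sigma)_{\bullet}$), so its image $\bar\gamma_{s}\in Z(\sigma)/M_{\sigma}$ acts on $D(\sigma)=G_{B(\sigma)}(\RR)\cdot\vf_{\text{split}}$ sending this orbit of split polarized Hodge structures into itself. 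By Theorem \ref{mainthm}(C), $Gr_{0}^{W}M_{B(\sigma)}$ is the $\QQ$-algebraic closure of the orbit, i.e.\ the Mumford-Tate group of a generic $\vf_{\text{split}}$.

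The main obstacle is to show that $\bar\gamma_{s}$ itself (and not merely its action on the orbit) lies in $Gr_{0}^{W}M_{B(\sigma)}$ rather than only in its normalizer. I would proceed via the Hodge-tensor definition: $\bar\gamma_{s}$ preserves the $\QQ$-vector space $H$ of Hodge tensors common to $\tilde{B}(\sigma)$ setwise (since $\gamma\cdot t\in H$ whenever $t\in H$), and the task becomes to show that the induced semisimple rational action of $\bar\gamma_{s}$ on $H$ is trivial. Here one expects to exploit Proposition \ref{prop b}: since $M_{\sigma}(\CC)\rtimes G_{B(\sigma)}(\RR)$ acts transitively on $\tilde{B}(\sigma)$, one can write $\gamma\cdot\vft_{0}=m\cdot\vft_{0}$ for some $m\in M_{B(\sigma)}(\CC)$, and then analyze the constraints on $m^{-1}\gamma\in\mathrm{Stab}(\vft_{0})$ imposed by $\gamma$ being $\QQ$-rational, integral, and neat. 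Once $\bar\gamma_{s}\in Gr_{0}^{W}M_{B(\sigma)}$ is established, the semidirect product $M_{B(\sigma)}=M_{\sigma}\rtimes G_{B(\sigma)}$ of Proposition \ref{prop a} yields $\gamma=\gamma_{s}\gamma_{u}\in M_{B(\sigma)}(\ZZ)$.
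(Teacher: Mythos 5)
Your first stage (finiteness of the induced action on $\langle\sigma\rangle$ plus neatness, giving $\Gamma_{\sigma}\subseteq Z(\sigma)(\ZZ)$) is exactly the paper's opening step and is fine. The second stage, however, has a genuine gap, in two respects. First, the claim that the unipotent Jordan factor $\gamma_{u}$ of $\gamma\in Z(\sigma)(\QQ)$ lies in the unipotent radical $M_{\sigma}$ is false: a unipotent \emph{element} need not lie in the unipotent \emph{radical}, and $Gr_{0}^{W}Z(\sigma)$, though reductive, typically contains nontrivial unipotents (it has $SL_{2}$ factors in several of the paper's examples). So the splitting of the problem into ``$\gamma_{u}\in M_{\sigma}$'' and ``$\bar\gamma_{s}\in Gr_{0}^{W}M_{B(\sigma)}$'' does not get off the ground; at best one should ask whether the full image $\bar\gamma\in Gr_{0}^{W}Z(\sigma)$ lies in $Gr_{0}^{W}M_{B(\sigma)}$, whose preimage in $Z(\sigma)$ is $M_{\sigma}\rtimes G_{B(\sigma)}=M_{B(\sigma)}$ by Proposition \ref{prop a}. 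Second, and more seriously, the step you yourself call ``the main obstacle'' --- showing that $\Gamma_{\sigma}$ acts \emph{trivially} on the common Hodge tensors, not merely preserves their span --- is the entire content of the lemma beyond stage one, and you leave it as a plan (``one expects to exploit Proposition \ref{prop b}\dots analyze the constraints on $m^{-1}\gamma$''). Transitivity of $M_{\sigma}(\CC)\rtimes G_{B(\sigma)}(\RR)$ on $\tilde{B}(\sigma)$ gives you $\gamma.\vft_{0}=m.\vft_{0}$, but extracting triviality on Hodge tensors from rationality, integrality and neatness of $m^{-1}\gamma$ is precisely what is missing, and no argument is supplied.

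The paper closes this gap by a different and much shorter mechanism, with no Jordan decomposition, no transitivity, and no normalizer issue: since $\Gamma_{\sigma}$ preserves $B$ and fixes $N$, it preserves each polarized pair $(P_{k},Q_{k})$; the Hodge tensors common to all of $\tilde{B}(\sigma)$ are of type $(0,0)$ in every LMHS of the boundary component, so the induced form on the rational span of these tensors inside $P_{k}^{\otimes a}\otimes\check{P}_{k}^{\otimes b}$ is definite (Hodge--Riemann), and $\Gamma_{\sigma}$ therefore acts on them through the integer points of a definite orthogonal group, which form a \emph{finite} group; neatness then forces this action to be trivial. By the very definition of $M_{B(\sigma)}$ as the largest subgroup of $M$ fixing the common Hodge tensors (with Chevalley's theorem reducing matters to finitely many tensor spaces, as in the paper's footnote), triviality of the action immediately yields $\Gamma_{\sigma}\leq M_{B(\sigma)}(\ZZ)$. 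If you want to salvage your outline, replace the Jordan-decomposition reduction and the appeal to Proposition \ref{prop b} by this definiteness-plus-neatness argument applied directly to the common Hodge tensors.
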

\begin{proof}
As a subgroup of $\Gamma,$ $\Gamma_{\sigma}$ is integral, unimodular,
and neat. Since $\Gamma_{\sigma}$ preserves $\sigma$ and its faces,
it acts on $\langle\sigma\rangle$ through a finite group, which by
neatness must be trivial. So $\Gamma_{\sigma}$ fixes $\sigma$ (hence
$N$ and all $P_{k}$) and belongs to $Z(\sigma)(\ZZ)$.

Now the polarizing form $B$ is preserved by the action on $\lm$
of $M(\ZZ)$, \emph{a fortiori} by that of $\Gamma_{\sigma}$. So
$\Gamma_{\sigma}$ preserves $(P_{k},Q_{k})$ for each $k$, and thus
acts through the (finite) integer points of an orthogonal group on
the spaces of Hodge tensors in each $P_{k}^{\otimes a}\otimes\check{P}_{k}^{\otimes b}$.
By neatness, these actions are also trivial,%
\footnote{Lest the reader doubt the assertion that $\Gamma$ can be chosen neat
and of finite index, we remark that by Chevalley's theorem $Gr_{0}^{W}M_{B(\sigma)}$
is ``cut out'' by finitely many Hodge tensors. So once triviality
is checked for finitely many $a$ and $b$, it follows for the rest.%
} and the conclusion follows.
\end{proof}
Clearly $\Gamma_{\sigma}$ preserves $W(\sigma)_{\bullet}$ and commutes
with $e^{\langle\sigma\rangle_{\CC}}$. We therefore have a fibration
tower
\[
\overline{B(\sigma)}\sur\cdots\sur\overline{B(\sigma)}_{(k)}\underset{\bar{\rho}_{\sigma}^{(k)}}{\sur}\overline{B(\sigma)}_{(k-1)}\sur\cdots\sur\overline{B(\sigma)}_{(1)}\underset{\bar{\rho}_{\sigma}^{(1)}}{\sur}\overline{D(\sigma)}
\]
with fibers $\bar{\mathfrak{F}}_{\vft}^{(k)}$, where:
\begin{itemize}
\item $\overline{D(\sigma)}\;:=\; Gr_{0}^{W}\Gamma_{\sigma}\backslash D(\sigma)$
is a standard Mumford-Tate domain quotient (i.e. by a neat subgroup
of $G_{B(\sigma)}(\ZZ)$)
\item $\overline{B(\sigma)}_{(k)}:=\left.\left(\frac{\Gamma_{\sigma}}{W_{-(k+1)}\Gamma_{\sigma}}\right)\right\backslash B(\sigma)_{(k)}$
\item $\bar{\mathfrak{F}}_{\vft}^{(k)}:=\left.Gr_{-k}^{W}\Gamma_{\sigma}\right\backslash \mathfrak{F}_{\vft}^{(k)}$
is isogenous to the quotient of $\mathfrak{F}_{\tilde{\varphi}}^{(k)}$ by $Gr_{-k}^{W(\sigma)}\lm_{\sigma,\ZZ}$,
i.e. to a (generalized if $k>1$) intermediate Jacobian.
\end{itemize}
In particular, the $\bar{\mathfrak{F}}_{\vft}^{(k)}$ are complex
tori for $k=1$ and complex semi-tori for $k>1$, with complex structure
depending on $[\vf_{\text{split}}]\in\overline{D(\sigma)}$.  Write $\bar{\mathfrak{F}}_{\tilde{\varphi}}$ for the ``total'' fiber of $\overline{B(\sigma)} \twoheadrightarrow \overline{D(\sigma)}$.
\begin{rem}
(i) In order that the $Gr_{0}^{W}\Gamma_{\sigma}$ not act on the
fibers, we need to know that it acts on $D(\sigma)$ without fixed
points. While this follows from the fact (cf. Theorem \ref{thm Ku})
that $\Gamma\backslash D_{M,\Sigma}$ is a log-manifold, a direct
argument can be given as follows. If $\vf_{\text{split }}\in D(\sigma)$
is fixed by $\bar{\gamma}\in Gr_{0}^{W}\Gamma_{\sigma}$, then its
$\QQ$-closure $\mathcal{M}$ (as a morphism) commutes with $\bar{\gamma}$.
The orbit $\mathcal{M}(\RR).\vf_{\text{split}}$ is then a MT subdomain
of $D(\sigma)$ fixed pointwise by $\bar{\gamma}$. Since every MT
domain for Hodge structures contains a CM point, $\bar{\gamma}$ fixes
a CM Hodge structure $\hat{\vf}_{\text{split}}$ and thus its irreducible
pure polarized CM HS components. These are all of the form described
in \cite[sec. V]{GGK}, constructed from a generalized CM type $(K,\Theta)$.
The integral points of their MT groups are contained in the elements
of $\mathcal{O}_{K}^{*}$ with modulus $1$ under all complex embeddings.
By a theorem of Kronecker \cite{Kr}, these are roots of unity. The neatness
assumption on $\Gamma$ (hence on $Gr_{0}^{W}\Gamma_{\sigma}$) now
implies that $\bar{\gamma}$ acts trivially on $\oplus_{\ell}Gr_{\ell}^{W(\sigma)}\lm$
and so is itself trivial.

(ii) That the $Gr_{-k}^{W}\Gamma_{\sigma}$ do not have fixed points
on $\bar{\mathfrak{F}}_{\vft}^{(k)}$ is a consequence of Lemma \ref{lemma a}.
\end{rem}
We can immediately characterize an important special case.
\begin{prop}
\label{prop hatch} (i) If $\lm_{\sigma}=\langle\sigma\rangle$ and
$\lg_{B(\sigma)}\subset I_{0}^{(-1,1)}+I_{0}^{(0,0)}+I_{0}^{(1,-1)}$,
then $\overline{B(\sigma)}$ is an irreducible component of a Shimura
variety.

(ii) If $W(\sigma)_{-2}\lm_{\sigma}=\langle\sigma\rangle$, $Gr_{-1}^{W(\sigma)}\lm_{\sigma}$
is of type $(-1,0)+(0,-1)$, $\lg_{B(\sigma)}$ is of type $(-1,1)+(0,0)+(1,-1)$,
and the map $\lg_{B(\sigma)}\to\text{End}(Gr_{-1}^{W(\sigma)}\lm_{\sigma})$
is injective, then $\overline{B(\sigma)}$ is the canonical abelian
fibration over (an irreducible component of) a Shimura variety of
Hodge type.%
\footnote{cf. \cite[Def. 7.1]{Mi} for the definition of the canonical abelian
fibration.%
}\end{prop}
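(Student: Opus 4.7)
\textbf{Plan.} The approach is to walk up the iterated fibration tower
\[
\overline{B(\sigma)} \twoheadrightarrow \cdots \twoheadrightarrow \overline{B(\sigma)}_{(k)} \twoheadrightarrow \cdots \twoheadrightarrow \overline{B(\sigma)}_{(1)} \twoheadrightarrow \overline{D(\sigma)}
\]
of the present section, show that the base $\overline{D(\sigma)}$ is a Shimura-variety component under each hypothesis, and compute the successive fibers $\bar{\mathfrak{F}}^{(k)}_{\vft}$ via \eqref{eqn???} and \eqref{eqn????}.

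\textbf{Step 1 (base).} In both (i) and (ii), the condition that $\lg_{B(\sigma)}$ has Hodge type $(-1,1)+(0,0)+(1,-1)$ under $\ad\vf_{\text{split}}$ is the standard Deligne Shimura-datum criterion, forcing $D(\sigma) = G_{B(\sigma)}(\RR)/K_{\vft_0}$ to be Hermitian symmetric. By Theorem \ref{thm1}(B) (resp.\ Theorem \ref{mainthm}(C)), $G_{B(\sigma)}$ is the Mumford--Tate group of the generic $\vf_{\text{split}}$; Lemma \ref{lemma a} and neatness identify $Gr_0^W\Gamma_\sigma$ as a neat arithmetic subgroup of $G_{B(\sigma)}(\ZZ)$. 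Hence $\overline{D(\sigma)}$ is a connected component of a Shimura variety attached to $(G_{B(\sigma)}, D(\sigma))$.

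\textbf{Step 2 (case (i)).} Because $\lm_\sigma = \langle\sigma\rangle \subset I^{-1,-1}(\lm_\CC)$, the Hodge structure on $\lm_\sigma$ is pure of type $(-1,-1)$. Consequently $Gr_{-k}^{W(\sigma)}\lm_\sigma = 0$ for $k \neq 2$, while $Gr_{-2}^{W(\sigma)}\lm_\sigma = \langle\sigma\rangle_\CC$ satisfies $F^0 = 0$. Formula \eqref{eqn???} annihilates all fibers with $k \neq 2$, and \eqref{eqn????} gives
\[
\mathfrak{F}_\vft^{(2)} = \langle\sigma\rangle_\CC \,/\, \bigl(F^0\langle\sigma\rangle_\CC + \langle\sigma\rangle_\CC\bigr) = 0.
\]
The whole tower collapses to $\overline{B(\sigma)} = \overline{D(\sigma)}$, proving (i).

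\textbf{Step 3 (case (ii)).} The hypothesis $W(\sigma)_{-2}\lm_\sigma = \langle\sigma\rangle$ combined with $\langle\sigma\rangle \subset I^{-1,-1}$ forces $W_{-3}\lm_\sigma = 0$, so $Gr_{-k}^{W(\sigma)}\lm_\sigma = 0$ for $k \geq 3$; the Step 2 calculation again kills the $k=2$ fiber. The tower reduces to the single step $\overline{B(\sigma)} = \overline{B(\sigma)}_{(1)} \twoheadrightarrow \overline{D(\sigma)}$, and the fiber $\bar{\mathfrak{F}}^{(1)}_{\vft}$ is the quotient of the pure weight-$(-1)$ Hodge structure $Gr_{-1}^{W(\sigma)}\lm_\sigma$ of type $(-1,0)+(0,-1)$ by the lattice $Gr_{-1}^W\Gamma_\sigma$ (integral by Lemma \ref{lemma a}), polarized by $B_{-1}$ from Remark \ref{rem ?}(c); this is a polarized abelian variety varying holomorphically with $\vf_{\text{split}} \in \overline{D(\sigma)}$. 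The injectivity of $\lg_{B(\sigma)} \hookrightarrow \text{End}(Gr_{-1}^{W(\sigma)}\lm_\sigma)$ embeds $(G_{B(\sigma)}, D(\sigma))$ as a Shimura sub-datum of the Siegel datum $(GSp(Gr_{-1}^{W(\sigma)}\lm_\sigma, B_{-1}), \mathcal{H}^\pm)$, which by definition exhibits $(G_{B(\sigma)}, D(\sigma))$ as being of Hodge type and the fibration as the restriction of the universal abelian variety.

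\textbf{Main obstacle.} The computational core is the verification that all higher fibers vanish; this rests delicately on the location of $\langle\sigma\rangle$ in $I^{-1,-1}$ and on the extra quotient by $\langle\sigma\rangle_\CC$ in \eqref{eqn????}. The conceptually subtler point is the last step of (ii): identifying the resulting abelian fibration as the \emph{canonical} one of a Shimura variety of Hodge type, rather than merely as some abelian family over a Hermitian symmetric quotient. This is exactly what the injectivity hypothesis buys — it furnishes the required closed immersion of Shimura data into a Siegel datum.
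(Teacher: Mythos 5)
Your proposal is correct and follows essentially the same route as the paper: the paper's (very terse) proof invokes the Hermitian-symmetric criterion for $\lg_{B(\sigma)}$ of type $(-1,1)+(0,0)+(1,-1)$, the arithmetic/congruence nature of $Gr_{0}^{W}\Gamma_{\sigma}\subset G_{B(\sigma)}(\ZZ)$, and then says the rest ``follows from the theory above'' -- precisely the iterated fibration with fibers \eqref{eqn???}, \eqref{eqn????} that you work out explicitly (collapse of all fibers in case (i), a single polarized abelian fiber of type $(-1,0)+(0,-1)$ in case (ii), with the injectivity hypothesis furnishing the symplectic embedding required by the Hodge-type definition cited in the footnote). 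Your elaboration of the vanishing of the $k\geq 2$ fibers via $\langle\sigma\rangle\subset I^{-1,-1}$ and the extra quotient by $\langle\sigma\rangle_{\CC}$ is exactly the content the paper leaves implicit.
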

\begin{proof}
That $\lg_{B(\sigma)}$ be of type $(-1,1)+(0,0)+(1,-1)$ is the criterion
for $D(\sigma)$ to be a Hermitian symmetric space (cf. \cite[sec. 1.D]{Ke}
or \cite{Mi}), and $Gr_{0}^{W}\Gamma_{\sigma}\subset G_{B(\sigma)}(\RR)^{+}$
(being of finite index in $G_{B(\sigma)}(\ZZ)$) is of congruence
type. Otherwise, this follows from the theory above.
\end{proof}
In either case, $B(\sigma)$ is a quasi-projective algebraic variety
with a model over $\bar{\QQ}$ \cite[sec. 5.B]{Ke}, \cite[Ch. 12-14]{Mi}.
\begin{rem}
In order to check the conditions of Proposition \ref{prop hatch}
and later results, it may be helpful to note that
\[
W_{-k}\lm_{\sigma}=\left(\cap_{j}\ker(\ad N_{j})\right)\cap\text{im}\left\{ (\ad N)^{k}\right\} .
\]

\end{rem}
Now it is natural to expect that the constraints required to make
$\overline{B(\sigma)}$ a CM abelian variety are even more stringent
than those in Proposition \ref{prop hatch}. As we shall see, this
is not the case. We first dig a bit further into the boundary component
structure.
\begin{lem}
\label{lemma b}The following are equivalent:

(i) the base $\overline{D(\sigma)}$ is a point;

(ii) the $Gr_{-k}^{W(\sigma)}\lz(\sigma)$ (and thus $P_{k},\, Gr_{k}^{W(\sigma)}\lm$)
are polarized CM Hodge structures, constant in $\vft\in\tilde{B}(\sigma)$;

(iii) $G_{B(\sigma)}$ \emph{{[}}resp. $\lg_{B(\sigma)}$\emph{{]}
is abelian;}

(iv) in the decomposition of $G_{\sigma}$ \emph{{[}}resp. $\lg_{\sigma}$\emph{{]}}
into $\QQ$-simple and abelian factors, the projection of $(\vf_{0})_{\text{split }}$
\emph{{[}}resp.%
\footnote{Here $\phi\in\lg_{\sigma,\RR}$ is the ``tangent'' to $(\phi_{0})_{\text{split}}$
is in $\S\S3-4$.%
} $\phi$\emph{{]}} onto the simple factors is trivial.\end{lem}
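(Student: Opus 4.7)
The plan is to route through the Lie-algebra description of $\lg_{B(\sigma)}$ recorded in the proofs of Theorems \ref{thm1} and \ref{mainthm}, establishing (iii)$\Leftrightarrow$(iv) first and then bootstrapping the remaining equivalences. By neatness of $\Gamma$, the group $Gr_0^{W}\Gamma_\sigma$ acts without fixed points on the connected space $D(\sigma)$, so $\overline{D(\sigma)}$ is a point iff $D(\sigma)$ itself is; we may therefore work on $D(\sigma)$ throughout.

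For (iii)$\Leftrightarrow$(iv), decompose $\lg_\sigma = \mathfrak{a} \oplus \mathfrak{h}_1 \oplus \cdots \oplus \mathfrak{h}_s$ into its central abelian factor and $\QQ$-simple ideals, and write $\phi = \phi_\mathfrak{a} + \sum_i \phi_i$ accordingly. The proofs of Theorems \ref{thm1} and \ref{mainthm} present $\lg_{B(\sigma)}$ as the $\QQ$-Lie-algebra closure in $\lg_\sigma$ of the iterated adjoint orbit $\sum_{k\geq 0}(\ad\lg_\sigma)^{k}\langle\phi\rangle_{\CC}$. Since $[\mathfrak{h}_i,\mathfrak{h}_j]=0$ for $i\neq j$ and $\mathfrak{a}$ is central, all iterated brackets respect the direct-sum decomposition; and both the $\QQ$-closure and the Lie-bracket closure preserve $\ad\lg_\sigma$-invariance by Jacobi. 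Hence $\lg_{B(\sigma)}$ splits as $\mathfrak{a}' \oplus \bigoplus_{i}\lg_i'$, where each $\lg_i'\subseteq\mathfrak{h}_i$ is an $\ad\mathfrak{h}_i$-stable $\QQ$-Lie-subalgebra---equivalently, a $\QQ$-ideal of $\mathfrak{h}_i$. By $\QQ$-simplicity, $\lg_i'=0$ when $\phi_i=0$ and $\lg_i'=\mathfrak{h}_i$ otherwise; so $\lg_{B(\sigma)}$ is abelian precisely when every $\phi_i$ vanishes, which is (iv).

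The implication (iii)$\Rightarrow$(i) is immediate: if $\lg_{B(\sigma)}$ is abelian then $[\phi,\lg_{B(\sigma)}]=0$, so the tangent space to $D(\sigma)=G_{B(\sigma)}(\RR)/K_{\vft_0}$ at the base point vanishes, and connectedness collapses $D(\sigma)$ to a point. Conversely, if $D(\sigma)$ is a point then $K_{\vft_0}=G_{B(\sigma)}(\RR)$, so $\phi$ centralizes $\lg_{B(\sigma)}$; restricting this to the factor $\lg_i'=\mathfrak{h}_i$ associated to a nonvanishing $\phi_i$ yields $[\phi_i,\mathfrak{h}_i]=0$ inside a simple Lie algebra with trivial center, forcing $\phi_i=0$---contradiction. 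Hence all $\phi_i$ vanish, giving (iv), and therefore (iii).

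Finally, (i)$\Leftrightarrow$(ii) follows from the dictionary between $\vft$ and its $\vf_{\text{split}}$: the projection $\tilde{\rho}_\sigma\colon\tilde{B}(\sigma)\twoheadrightarrow D(\sigma)$ sends each LMHS to the polarized HS on $\bigoplus_{k\geq 0}P_k$ it induces, which by Remark \ref{rem ??}(c) determines and is determined by the HS on $\bigoplus_{k\geq 0}Gr_{-k}^{W(\sigma)}\lz(\sigma)$. Constancy of this HS in $\vft$ is thus constancy of $\tilde{\rho}_\sigma$, i.e.~(i), while the CM part of (ii) says that the MT group of this HS---namely $G_{B(\sigma)}$---is a torus, which is (iii); by the equivalence (i)$\Leftrightarrow$(iii) above, CM-ness is automatic from constancy. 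The main obstacle in the whole argument is securing the $\QQ$-ideal property of $\lg_{B(\sigma)}$ inside $\lg_\sigma$; once that is in hand via the $\ad$-invariance/$\QQ$-simplicity step, everything else falls out of the homogeneous-space and Hodge-theoretic dictionary.
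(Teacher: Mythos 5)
Your argument is correct, but it is genuinely different in character from the paper's: there the proof of Lemma \ref{lemma b} is essentially a citation, outsourcing $(i)\Leftrightarrow(ii)\Leftrightarrow(iii)$ to the general theory of CM Hodge structures and Mumford--Tate domains in \cite[Ch.~V]{GGK} (a Mumford--Tate domain is a point iff the Hodge structure is CM iff its Mumford--Tate group is a $\QQ$-torus) and $(iii)\Rightarrow(iv)$ to Prop.~(IV.A.9) of \cite{GGK}. You instead work directly from the explicit description, extracted from the proofs of Theorems \ref{thm1} and \ref{mainthm}, of $\lg_{B(\sigma)}$ as the $\QQ$-Lie-algebra closure of the iterated adjoint orbit of $\langle\phi\rangle_{\CC}$; your key observation that this closure is $\ad\lg_{\sigma}$-stable, hence a $\QQ$-ideal of $\lg_{\sigma}$ and therefore of the form $\mathfrak{a}'\oplus\bigoplus_{\phi_{i}\neq0}\mathfrak{h}_{i}$, is sound (the $\QQ$-span-and-bracket closure of an $\ad$-stable set stays $\ad$-stable, and $\QQ$-simplicity does the rest), and it yields both implications $(iii)\Leftrightarrow(iv)$ -- the paper only remarks on $(iii)\Rightarrow(iv)$ -- as well as the homogeneous-space argument for $(i)\Leftrightarrow(iii)$ via the centralizer of $\phi$. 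What your route buys is a self-contained proof and the independently useful fact that $\lg_{B(\sigma)}$ is an ideal of $\lg_{\sigma}$; what the paper's buys is brevity and placement of the lemma inside the GGK framework on CM points of Hodge domains. Two small points of care: your identification ``the MT group of this HS, namely $G_{B(\sigma)}$'' is only valid for a generic member of the orbit (or once $D(\sigma)$ is known to be a point) -- your logic is unaffected since you deduce CM-ness only in the direction $(i)\Rightarrow(ii)$, where $G_{B(\sigma)}$ is an abelian reductive, hence toral, $\QQ$-group containing the Mumford--Tate group of the constant $\vf_{\text{split}}$ -- and the passage from the paper's same-$\gamma$ powers $(\ad\gamma)^{k}\langle\phi\rangle_{\CC}$ to your arbitrary iterated words $\ad\gamma_{1}\cdots\ad\gamma_{k}\phi$ deserves a one-line polarization remark, since the $\ad$-stability of the generating set is what your ideal argument rests on.
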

\begin{proof}
For $(i)\,\Longleftrightarrow\,(ii)\,\Longleftrightarrow\,(iii)$,
see \cite[Ch. V]{GGK}. $(iii)\,\iff\,(iv)$ is obvious, and is
essentially Prop. (IV.A.9) in \cite{GGK}.
\end{proof}
Recall the notation $\Lambda^{-1,-1}(\mathfrak{m}_{\CC}):=\oplus_{p<0,q<0}\mathfrak{m}^{p,q}$.
\begin{lem}
\label{lemma c}The following are equivalent:

(i) the fibers $\bar{\mathfrak{F}}_{\vft}$ are compact;

(ii) $B^{\RR}(\sigma)=B(\sigma)$;

(iii) $\Lambda^{-1,-1}(\lm_{\CC})=\langle\sigma\rangle_{\CC}$.\end{lem}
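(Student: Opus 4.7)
The plan is to establish $(iii)\Leftrightarrow(ii)$ and $(iii)\Leftrightarrow(i)$ separately, with a common preliminary step identifying $\Lambda^{-1,-1}(\lm_{\CC})$ as a subspace of $\lm_{\sigma,\CC}$. The main tool is the Deligne splitting $\delta$ used in the proof of Theorem \ref{thm1}, together with the explicit description of fibers in \eqref{eqn???}--\eqref{eqn????}.

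For the preliminary step, fix an $\RR$-split $\vft_{0}\in\tilde{B}^{\RR}(\sigma)$ (Lemma \ref{lemma3}). Given $\alpha\in\Lambda^{-1,-1}(\lm_{\CC})\cap\lm_{\RR}$, consider $\vft_{\alpha}:=e^{\ad(i\alpha)}.\vft_{0}$. Since $\alpha\in W(\sigma)_{-2}\lm$ acts trivially on $Gr_{\bullet}^{W(\sigma)}\lm$, the polarized HS on each graded piece is unchanged; by the ``converse to $(2.3)$'' in \cite{CK}, $\vft_{\alpha}\in\tilde{B}(\sigma)$, and the canonical $\delta$ satisfies $\delta(\vft_{\alpha})=\alpha$. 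Since each $N_{j}\in I^{-1,-1}$ is a Hodge $(r,r)$-morphism, property $(3)$ of the Deligne $\delta$ gives $[\delta,N_{j}]=0$, so $\delta\in\lz(\sigma)\cap W_{-2}\lm\subseteq\lz(\sigma)\cap W_{-1}\lm=\lm_{\sigma}$ (the cone analogue of Lemma \ref{lem21}(iii)). Hence $\alpha\in\lm_{\sigma,\RR}$, and complex-linearity yields $\Lambda^{-1,-1}(\lm_{\CC})\subseteq\lm_{\sigma,\CC}$, with the same construction showing that $\delta$ sweeps out all of $\Lambda^{-1,-1}(\lm_{\CC})\cap\lm_{\RR}$ as $\vft$ varies over $\tilde{B}(\sigma)$.

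For $(ii)\Leftrightarrow(iii)$: given $\vft\in\tilde{B}(\sigma)$, write $\vft=e^{i\delta}.\vft_{1}$ with $\vft_{1}=e^{-i\delta}.\vft\in\tilde{B}^{\RR}(\sigma)$. Because $\delta\in\lz(\sigma)$ commutes with $\langle\sigma\rangle_{\CC}$, the condition $e^{\tau}.\vft\in\tilde{B}^{\RR}(\sigma)$ for some $\tau\in\langle\sigma\rangle_{\CC}$ becomes $e^{\tau+i\delta}.\vft_{1}\in\tilde{B}^{\RR}(\sigma)$. A short check (using that any $\eta\in\lm_{\sigma,\CC}$ lies in $W_{-1}$ and hence has no $(0,0)$-part, so $e^{\eta-\bar\eta}$ preserves the bigrading of $\vft_{1}$ only when $\eta$ is real) shows this forces $\tau+i\delta\in\lm_{\sigma,\RR}$. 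Writing $\tau=\tau_{R}+i\tau_{I}$ with $\tau_{R},\tau_{I}\in\langle\sigma\rangle_{\RR}$ and separating real and imaginary parts gives $\delta=-\tau_{I}\in\langle\sigma\rangle_{\RR}$. Thus $[\vft]\in B^{\RR}(\sigma)\Leftrightarrow\delta(\vft)\in\langle\sigma\rangle_{\RR}$, and $(ii)$ holds iff $\Lambda^{-1,-1}(\lm_{\CC})\cap\lm_{\RR}\subseteq\langle\sigma\rangle_{\RR}$, equivalently $\Lambda^{-1,-1}(\lm_{\CC})=\langle\sigma\rangle_{\CC}$.

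For $(i)\Leftrightarrow(iii)$: compactness of $\bar{\mathfrak{F}}_{\vft}$ is equivalent to compactness of each level-$k$ fiber $\bar{\mathfrak{F}}_{\vft}^{(k)}$. Level $k=1$ is automatic, being a compact torus associated to a pure weight $-1$ HS. For $k\geq2$, \eqref{eqn???}--\eqref{eqn????} identify
\[
\bar{\mathfrak{F}}_{\vft}^{(k)}\;\cong\;Gr_{-k}^{W(\sigma)}\lm_{\sigma,\ZZ}\,\big\backslash\, Gr_{-k}^{W(\sigma)}\lm_{\sigma,\CC}\big/(F^{0}+\epsilon_{k}),
\]
with $\epsilon_{k}=\langle\sigma\rangle_{\CC}$ at $k=2$ and $\epsilon_{k}=0$ otherwise; this is compact iff $F^{0}+\bar{F}^{0}+\epsilon_{k}$ spans, which (by inspecting Hodge types) is iff $\bigoplus_{p,q\leq-1}I^{p,q}\bigl(Gr_{-k}^{W(\sigma)}\lm_{\sigma}\bigr)\subseteq\epsilon_{k}$. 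Summing over $k\geq2$ yields $\Lambda^{-1,-1}(\lm_{\sigma,\CC})\subseteq\langle\sigma\rangle_{\CC}$, which by the preliminary step is exactly $(iii)$.

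The main obstacle is the preliminary step: verifying both that $\vft_{\alpha}\in\tilde{B}(\sigma)$ (appealing to the CK converse) and that $\delta(\vft_{\alpha})=\alpha$, which pins down the image of $\delta$ and makes all three conditions refer to the same subspace of $\lm_{\CC}$. Once this is in hand, the rest is a matter of comparing $\delta$-values to $\langle\sigma\rangle_{\RR}$ and reading off fiber compactness from the $(p,q)$-types of $\lm_{\sigma}$.
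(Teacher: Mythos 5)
The argument collapses at your ``preliminary step'', and the failure propagates through both equivalences. For an arbitrary real $\alpha\in\Lambda^{-1,-1}(\lm_{\CC})$ you assert $\vft_{\alpha}:=e^{\ad(i\alpha)}.\vft_{0}\in\tilde{B}(\sigma)$ ``by the converse to (2.3)'', on the grounds that the graded polarized Hodge structures are unchanged. But the converse theorem also requires the horizontality condition \eqref{eqn hatch}(i), i.e.\ $N_{j}\,e^{i\alpha}F_{0}^{p}\subseteq e^{i\alpha}F_{0}^{p-1}$, equivalently $\left(e^{-\ad(i\alpha)}N_{j}\right)F_{0}^{p}\subseteq F_{0}^{p-1}$; the correction terms $(\ad\alpha)^{k}N_{j}$ ($k\geq1$) have Hodge types $(a,b)$ with $a\leq-2$, so they only carry $F_{0}^{p}$ into $F_{0}^{p-2}$, and horizontality fails whenever $[\alpha,N_{j}]\neq0$. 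Your own next sentence makes the obstruction visible: membership in $\tilde{B}(\sigma)$ forces $\delta(\vft_{\alpha})=\alpha$ to commute with every $N_{j}$, so this construction can only ever reach $\alpha\in\lz(\sigma)$. Consequently the conclusions you draw --- that $\Lambda^{-1,-1}(\lm_{\CC})\subseteq\lm_{\sigma,\CC}$ and that $\delta$ sweeps out all of $\Lambda^{-1,-1}(\lm_{\CC})\cap\lm_{\RR}$ --- are false in general, and false exactly in the situations the lemma must cover (where (i)--(iii) all fail simultaneously). Concretely, for a maximal unipotent degeneration of weight-$4$ Hodge structures with $h=(1,1,1,1,1)$ one has, in \eqref{eqn sl2-repn}, $\lm\cong\tilde{P}_{6}\otimes V(6)\oplus\tilde{P}_{2}\otimes V(2)$ with Hodge--Tate LMHS; then $(\ad N)^{4}\tilde{P}_{6}$ has type $(-1,-1)$, hence lies in $\Lambda^{-1,-1}(\lm_{\CC})$, yet $(\ad N)^{5}\tilde{P}_{6}\neq0$, so it is not even in $\lz(N)$. (The same phenomenon occurs at the $J(7)$ point of the $G_{2}$-variation in $\S$9, where indeed $\overline{B(N)}\cong\CC^{*}$ and all three conditions fail.)

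Because both halves of your proof invoke this identification to replace $\Lambda^{-1,-1}(\lm_{\sigma,\CC})$ --- which is what your $\delta$-criterion for $B^{\RR}(\sigma)$ and your fiber-compactness computation actually detect --- by the $\Lambda^{-1,-1}(\lm_{\CC})$ appearing in (iii), the sketch establishes at best the equivalence of (i) and (ii) with the condition $\Lambda^{-1,-1}(\lm_{\CC})\cap\lm_{\sigma,\CC}=\langle\sigma\rangle_{\CC}$. To recover (iii) as stated you still need the bridging fact that this intersection condition forces $\Lambda^{-1,-1}(\lm_{\CC})=\langle\sigma\rangle_{\CC}$; this is true, but requires an argument with the primitive decomposition \eqref{eqn sl2-repn} (resp.\ \eqref{eqn**}) --- an ``interior'' Hodge type in some $\tilde{P}_{\ell}$ contributes both to $\Lambda^{-1,-1}(\lm_{\CC})$ and, after applying enough powers of $\ad N$, to $\Lambda^{-1,-1}(\lm_{\sigma,\CC})$ outside $\langle\sigma\rangle_{\CC}$ --- and nothing of this sort appears in your write-up. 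Note that the paper's own proof bypasses the Deligne $\delta$ entirely: (i)$\Leftrightarrow$(ii) because $\overline{B^{\RR}(\sigma)}$ fibers in the compact real tori $Gr_{-k}^{W(\sigma)}\left((\lm_{\sigma}/\langle\sigma\rangle)_{\ZZ}\right)\backslash Gr_{-k}^{W(\sigma)}\left((\lm_{\sigma}/\langle\sigma\rangle)_{\RR}\right)$ and anything larger is non-compact, while (ii)$\Leftrightarrow$(iii) comes from comparing $\mathfrak{F}_{\vft}^{(k),\RR}$ with $\mathfrak{F}_{\vft}^{(k)}$ graded piece by graded piece; your type-by-type compactness computation is essentially this last step and is the sound part of the sketch, but the route through $\delta$ and the CK converse, as written, does not work.
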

\begin{proof}
$\underline{(i)\Leftrightarrow(ii)}$: The point is that $\overline{B^{\RR}(\sigma)}$
also admits an iterated fibration, with compact $\bar{\mathfrak{F}}_{\vft}^{(k),\RR}\simeq\frac{Gr_{-k}^{W(\sigma)}\left((\lm_{\sigma}/\langle\sigma\rangle)_{\RR}\right)}{Gr_{-k}^{W(\sigma)}\left((\lm_{\sigma}/\langle\sigma\rangle)_{\ZZ}\right)}.$
Anything ``larger'' is not compact.

$\underline{(ii)\Leftrightarrow(iii)}$: This is because $\mathfrak{F}_{\vft}^{(k),\RR}=\mathfrak{F}_{\vft}^{(k)}$
if and only if 
\[
Gr_{-k}^{W(\sigma)}\left((\lm_{\sigma}/\langle\sigma\rangle)_{\RR}\right)=Gr_{-k}^{W(\sigma)}\left((\lm_{\sigma}/\langle\sigma\rangle)_{\CC}\right)/F^{0},
\]
i.e. $Gr^{W(\sigma)}_{-k}\left( (\lm_{\sigma}\slash\langle\sigma\rangle)_{\CC}\right)$ has no $(-k+1,-1)+\cdots +(-1,-k+1)$ part.
\end{proof}
\begin{rem}
Under the equivalent conditions of Lemma \ref{lemma c}, the fibers
are complex tori if, in addition, $\Gamma_{\sigma}$ is abelian. This
is ensured by assuming that the weight filtration on $\lm_{\sigma}/\langle\sigma\rangle$
is \emph{short}: for every $k$, if $Gr_{-k}^{W(\sigma)}(\lm_{\sigma}/\langle\sigma\rangle)\neq\{0\}$,
then $W(\sigma)_{-2k}(\lm_{\sigma}/\langle\sigma\rangle)=\{0\}$.
\end{rem}
Now assume $\overline{D(\sigma)}$ is a point, with single fiber $\bar{\mathfrak{F}}$
($=\overline{B(\sigma)}$) a complex torus. Writing $V_{k}$ for the
weight $(-k)$ Hodge structure $Gr_{-k}^{W(\sigma)}(\lm_{\sigma}/\langle\sigma\rangle)$,
we know that $\bar{\mathfrak{F}}$ is isogenous to
\[
\times_{k>0}\frac{V_{k,\CC}}{F^{0}(V_{k,\CC})+V_{k,\ZZ}}\;=:\;\times_{k>0}J(V_{k}).
\]
The $V_{k}$ are polarizable CM Hodge structures, and therefore each
have a decomposition of the form \begin{equation}\label{eqn (*)}\bigoplus_{i}\left(V_{(K_{i},\Theta_{i})}^{-k}\right)^{\oplus m_{i}}\end{equation}
where the $\{K_{i}\}$ are CM fields and $\Theta_{i}$ $(-k)$-orientations.
Referring to \cite[Ch. V]{GGK} for a fuller discussion, these are
partitions of the set of embeddings $\text{Hom}(K_{i},\CC)$ into
$(p,q)$-subsets ($p+q=-k$) in a manner consistent with complex conjugation.
This induces a decomposition of $K_{i}\otimes_{\QQ}\CC$ into $(p,q)$-subspaces,
putting a (necessarily polarizable, weight $(-k)$) HS on the $\QQ$-vector
space $K_{i}$.

By Lemma \ref{lemma c}(iii), in our present case either $(p,q)$
or its ``conjugate'' $(q,p)$ is always in $\ZZ_{\geq0}\times\ZZ$.
For each term in \eqref{eqn (*)}, define a CM-type $\Theta_{i}'$
by replacing the pairs in $\ZZ_{\geq0}\times\ZZ$ by $(0,-1)$ and
their ``conjugates'' by $(-1,0)$. The resulting weight $(-1)$
HS $V_{(K_{i},\Theta_{i}')}^{-1}$ is well-known to be polarizable,
with Jacobian a CM abelian variety. So while the HS
\[
V_{k}'\;:=\;\bigoplus_{i}\left(V_{(K_{i},\Theta_{i})}^{-1}\right)^{\oplus m_{i}}
\]
is of type $(-1,0)+(0,-1)$, it shares the same underlying integral
structure and $F^{0}$ as $V_{k}$, and we conclude that 
\[
J(V_{k})\cong J(V_{k}').
\]
This proves that $J(V_{k})$ hence $\bar{\mathfrak{F}}$ is a CM abelian
variety.%
\footnote{Note that we consider a point to be a zero-dimensional CM abelian
variety.%
}

Here, then, is the strongest result that can be stated without explicit
knowledge of $\phi$:
\begin{thm}
\label{thm structure}Assume $\overline{B(\sigma)}\subset(\Gamma\backslash D_{M,\Sigma})$
is nonempty, and that 

(a) $W(\sigma)_{\bullet}(\lm_{\sigma}/\langle\sigma\rangle)$ is short,

(b) $\Lambda^{-1,-1}(\lm_{\sigma}/\langle\sigma\rangle)=\{0\}$, and

(c) $\lz(\sigma)/\lm_{\sigma}$ is abelian.\\
Then $\overline{B(\sigma)}$ is a CM abelian variety (and has a model
over $\bar{\QQ}$).
\end{thm}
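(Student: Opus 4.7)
The plan is to chain together Lemmas \ref{lemma b}, \ref{lemma c}, the Remark that follows them, and then the CM-type analysis spelled out in the paragraphs immediately preceding the theorem. The three hypotheses (a), (b), (c) are precisely calibrated to feed these pieces in sequence.

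First, hypothesis (c) combined with the decomposition $\lz(\sigma) = \lg_\sigma \oplus \lm_\sigma$ (coming from $\lz(\sigma) \subseteq W_0$, cf.\ Lemma \ref{lem21}(ii)) forces $\lg_\sigma \cong \lz(\sigma)/\lm_\sigma$ to be abelian, and hence the subalgebra $\lg_{B(\sigma)}$ is abelian. By the equivalence $(iii) \Longleftrightarrow (i)$ in Lemma \ref{lemma b}, the base $\overline{D(\sigma)}$ collapses to a point, so $\overline{B(\sigma)}$ equals its single fiber $\bar{\mathfrak{F}}$; and by (ii) of the same lemma, each graded piece $V_k := Gr_{-k}^{W(\sigma)}(\lm_\sigma/\langle\sigma\rangle)$ carries a polarized CM Hodge structure that is constant over $\tilde{B}(\sigma)$. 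Next, hypothesis (b) is the content of Lemma \ref{lemma c}(iii) after passing to $\lm_\sigma/\langle\sigma\rangle$, so by that lemma the fibers $\bar{\mathfrak{F}}_\vft^{(k)}$ in the tower of $\S7$ are \emph{compact} real tori. Finally, hypothesis (a) (shortness of the weight filtration on $\lm_\sigma/\langle\sigma\rangle$), via the Remark following Lemma \ref{lemma c}, forces $\Gamma_\sigma$ to be abelian and thereby upgrades each stage to a complex torus. Reassembling the tower, $\bar{\mathfrak{F}}$ is isogenous to the product $\prod_{k>0} J(V_k)$ of (generalized) intermediate Jacobians.

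The main — and really only nontrivial — step is to promote each $J(V_k)$ from a complex torus to a CM abelian variety; here the argument is exactly the one sketched in the displayed material before the theorem statement, which I simply have to verify applies. Each $V_k$ decomposes as in \eqref{eqn (*)} into CM isotypic pieces $V^{-k}_{(K_i,\Theta_i)}$, and condition (b) guarantees that the Hodge types $(p,q)$ occurring in $\lm_\sigma/\langle\sigma\rangle$ never lie in $\ZZ_{<0}\times\ZZ_{<0}$, so in every CM-type $\Theta_i$ appearing above, each pair of conjugate embeddings is split between a slot with $p \geq 0$ and a slot with $q \geq 0$. Replacing the pair $(p,q)$ with $p\geq 0$ (resp.\ $q \geq 0$) by $(0,-1)$ (resp.\ $(-1,0)$) produces a weight $(-1)$ CM-type $\Theta_i'$ and a polarizable weight $(-1)$ CM Hodge structure $V_k' = \bigoplus_i(V_{(K_i,\Theta_i')}^{-1})^{\oplus m_i}$ having the same underlying $\QQ$-vector space, the same integral lattice, and crucially the same $F^0$ as $V_k$. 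Consequently $J(V_k) \cong J(V_k')$, and the latter is a CM abelian variety by the standard construction of \cite[Ch. V]{GGK}.

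Putting everything together, $\bar{\mathfrak{F}} = \overline{B(\sigma)}$ is isogenous to a product of CM abelian varieties and so is itself a CM abelian variety. The existence of a canonical model over $\bar\QQ$ then follows from the Main Theorem of Complex Multiplication (Shimura--Taniyama), as applied in \cite[sec.\ 5.B]{Ke} or \cite[Ch.\ 12--14]{Mi}. The anticipated obstacle is not algebraic difficulty but rather the bookkeeping that shows the replacement $\Theta_i \leadsto \Theta_i'$ really is $F^0$-preserving: this is precisely where (b) enters, by preventing any ``interior'' Hodge type from appearing which would spoil the identification of the Jacobians.
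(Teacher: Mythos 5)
Your proposal is correct and takes essentially the same route as the paper: the paper's own argument is exactly the chain you describe, namely (c) $\Rightarrow$ $\lg_{B(\sigma)}$ abelian $\Rightarrow$ $\overline{D(\sigma)}$ a point via Lemma \ref{lemma b}, (b) $\Rightarrow$ compact fibers via Lemma \ref{lemma c}, (a) $\Rightarrow$ $\Gamma_{\sigma}$ abelian so the fiber is a complex torus isogenous to $\times_{k>0}J(V_{k})$, followed by the same $F^{0}$- and lattice-preserving replacement $\Theta_{i}\leadsto\Theta_{i}'$ showing each $J(V_{k})$ is a CM abelian variety. No gaps to report.
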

A useful shortcut is provided by the
\begin{cor}
\label{cor a}$\overline{B(\sigma)}\neq\emptyset$ is a CM abelian
variety if $W(\sigma)_{-2}\lm_{\sigma}=\langle\sigma\rangle$ and
$\lz(\sigma)/\lm_{\sigma}$ is abelian.
\end{cor}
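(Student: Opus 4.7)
The plan is to deduce the corollary directly from Theorem \ref{thm structure} by verifying that the two stated hypotheses --- $W(\sigma)_{-2}\lm_{\sigma} = \langle\sigma\rangle$ and $\lz(\sigma)/\lm_{\sigma}$ abelian --- force conditions (a), (b), (c) in the theorem. Condition (c) is already hypothesized, so the entire content is to extract (a) and (b) from the single weight-filtration condition.

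First I would observe that $\langle\sigma\rangle$ is a sub-MHS of $\lm_{\sigma}$: since each $N_j$ lies in $I^{-1,-1}(\lm_{\CC})$, so does $\langle\sigma\rangle_{\CC}$, making it a pure $(-1,-1)$ Tate sub-HS. Hence the quotient $\lm_{\sigma}/\langle\sigma\rangle$ inherits a (limit) MHS with a well-defined weight filtration $W(\sigma)_{\bullet}$. Using Lemma \ref{lem21}(iii) and its higher-rank analogue from \S5, $\lm_{\sigma} \subseteq W(\sigma)_{-1}\lm$, so $\lm_{\sigma}/\langle\sigma\rangle \subseteq W(\sigma)_{-1}(\lm/\langle\sigma\rangle)$. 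The hypothesis $W(\sigma)_{-2}\lm_{\sigma} = \langle\sigma\rangle$ then says $W(\sigma)_{-2}(\lm_{\sigma}/\langle\sigma\rangle) = 0$, which together with the previous inclusion means $\lm_{\sigma}/\langle\sigma\rangle$ is concentrated in weight $-1$, i.e., it is a \emph{pure Hodge structure of weight $-1$} sitting entirely in $Gr_{-1}^{W(\sigma)}$.

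Once this is in hand, (a) is automatic: the only possibly nonzero graded piece is at $k=1$, and $W(\sigma)_{-2}(\lm_{\sigma}/\langle\sigma\rangle)=0$ is precisely the shortness requirement for that $k$. For (b), since $\lm_{\sigma}/\langle\sigma\rangle$ is pure of weight $-1$, its Deligne bigrading consists only of components $I^{p,q}$ with $p+q = -1$. The subspace $\Lambda^{-1,-1}$ requires both $p<0$ and $q<0$; but $p,q\leq -1$ would force $p+q \leq -2$, a contradiction. Therefore $\Lambda^{-1,-1}(\lm_{\sigma}/\langle\sigma\rangle) = 0$, giving (b). Applying Theorem \ref{thm structure} then yields the conclusion.

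There is no real obstacle in the argument; the only subtlety to be careful about is the preliminary step that $\langle\sigma\rangle$ is a sub-MHS (so that the weight filtration and Deligne bigrading descend to $\lm_{\sigma}/\langle\sigma\rangle$), which rests on the fact noted in \S5 that each $N_j \in I^{-1,-1}(\lm_{\CC})$ is a Hodge tensor for every LMHS parametrized by $\tilde{B}(\sigma)$.
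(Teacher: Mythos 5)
Your proposal is correct and matches the paper's intent: the corollary is stated as an immediate specialization of Theorem \ref{thm structure}, and the intended verification is exactly yours --- since $\langle\sigma\rangle\subset I^{-1,-1}$ and $\lm_{\sigma}\subseteq W(\sigma)_{-1}\lm$, the hypothesis $W(\sigma)_{-2}\lm_{\sigma}=\langle\sigma\rangle$ forces $\lm_{\sigma}/\langle\sigma\rangle$ to be concentrated in weight $-1$, which yields both shortness (a) and $\Lambda^{-1,-1}=\{0\}$ (b), while (c) is assumed outright.
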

In the rank one case, using Remark \ref{rem ku}, this yields the
most practical criterion:
\begin{cor}
\label{cor b}Suppose $D_{M}$ parametrizes (weight zero, $B$-polarized)
HS on $\lm$ of level $2\ell$. A nilpotent $N\in\lm_{\QQ}$ produces
a nonempty boundary component $B(N)$ iff:\vspace{2mm}

$\bullet$ \emph{$(\ad N)^{2\ell+1}=0$;}

$\bullet$ $N(F^{\bullet})\subset F^{\bullet-1}$, for some $F^{\bullet}\in\check{D}_{M}$;
and

$\bullet$ \emph{$\left(P_{k}(N),Gr_{k}^{W(N)}F^{\bullet},B(\cdot,(\ad N)^{k}\cdot)\right)$}
is a polarized HS for each $k$.\vspace{2mm}\\
If in addition \vspace{2mm}

$\bullet$\emph{ $\ker(\mathrm{ad}N)\cap\text{im}\{(\mathrm{ad}N)^{2}\}=\langle N\rangle$},
and 

$\bullet$ $[\ker(\mathrm{ad}N),\ker(\mathrm{ad}N)]\subseteq\mathrm{im}(\mathrm{ad}N)$,\vspace{2mm}\\
then its quotient $\overline{B(N)}$ is a CM abelian variety, of dimension\linebreak
$\frac{1}{2}\dim\left(Gr_{-1}^{W(N)}\lm\right)$.\end{cor}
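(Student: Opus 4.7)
The plan is to derive the {}``iff'' part from Remark \ref{rem ku} and then apply Corollary \ref{cor a} for the second assertion, ending with an $\sl$-module calculation for the dimension.

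For the first part, note that in the rank-one setting with $\sigma=\QQ_{\geq0}\langle N\rangle$, the index set in Remark \ref{rem ku} has $r=1$ and $N_{1}=N$, so condition (b) of that remark (constancy of $W(N)_\bullet$ on $\sigma^\circ$) is vacuous, and the {}``primitive'' spaces $\hat{P}_{k}=\ker((\ad N)^{k+1})$ literally coincide with the $P_{k}=\bigcap_{j}\ker(\ad N_{j}\circ(\ad N)^{k})$ of Section 5. The three bulleted conditions therefore recover exactly conditions (a), \eqref{eqn hatch}(i), and (ii$'$) of Remark \ref{rem ku}, so the {}``converse to (2.3)'' of Cattani-Kaplan yields \eqref{eqn hatch}(ii), proving nonemptiness. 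The other direction (necessity) is already encoded in that same remark.

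For the second part, I would show that the two added hypotheses are precisely the two hypotheses of Corollary \ref{cor a} specialized to the cone $\sigma=\QQ_{\geq 0}\langle N\rangle$. Taking an $\sl$-triple $(Y,N_+,N)$ adapted to a $\QQ$-split point (Lemma \ref{lemma3}) and using the isotypical decomposition $\lm=\bigoplus_{\ell\ge0}\lm(\ell)$ of Section 3, the proof of Lemma \ref{lem21}(iii) already gives
\[
\ker(\ad N)\cap\text{im}(\ad N)=\bigoplus_{\ell\geq1}\lm(\ell)\cap E(-\ell).
\]
A slight refinement of the same argument (any element of $\ker(\ad N)\cap\text{im}((\ad N)^{2})$ lies in a $V(\ell)\subset\lm(\ell)$ with $\ell\geq 2$, and is in the lowest weight $E(-\ell)$) shows that $\ker(\ad N)\cap\text{im}((\ad N)^{2})=W(N)_{-2}\lm_{N}$. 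Hence the first added hypothesis is equivalent to $W(N)_{-2}\lm_{N}=\langle N\rangle$, which is the first hypothesis of Corollary \ref{cor a}. The second added hypothesis $[\ker(\ad N),\ker(\ad N)]\subseteq\text{im}(\ad N)$ says exactly $[\lz(N),\lz(N)]\subseteq\lz(N)\cap\text{im}(\ad N)=\lm_{N}$, i.e. that $\lz(N)/\lm_{N}$ is abelian. Applying Corollary \ref{cor a} then gives that $\overline{B(N)}$ is a CM abelian variety.

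For the dimension, I would argue as follows. The condition $W(N)_{-2}\lm_{N}=\langle N\rangle$ forces $\lm(\ell)\cap E(-\ell)=0$ for $\ell\geq3$; but the lowest weight space in an isotypic $\sl$-component determines its multiplicity, so $\lm(\ell)=0$ for $\ell\geq3$, and $\lm(2)$ is spanned by the chosen triple itself. Consequently $E(-1)\subseteq\lm$ reduces to $\lm(1)\cap E(-1)$, which maps isomorphically under $E(-1)\hookrightarrow Gr_{-1}^{W(N)}\lm$; and in turn $\lm_{N}/\langle N\rangle\cong\lm(1)\cap E(-1)$ by the same vanishing. Thus $Gr_{-1}^{W(N)}(\lm_{N}/\langle N\rangle)\cong Gr_{-1}^{W(N)}\lm$, and since $\overline{B(N)}$ is isogenous to the Jacobian of this weight $-1$ (polarizable, CM, type $(-1,0)+(0,-1)$) Hodge structure by the construction preceding Theorem \ref{thm structure}, its complex dimension equals $\tfrac{1}{2}\dim_\QQ Gr_{-1}^{W(N)}\lm$. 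The main (mild) obstacle is bookkeeping this $\sl$-decomposition carefully enough to identify the weight $-1$ piece of $\lm_{N}/\langle N\rangle$ with all of $Gr_{-1}^{W(N)}\lm$; once that vanishing of $\lm(\ell)$ for $\ell\geq3$ is in hand, everything else is formal.
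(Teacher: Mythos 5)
Your proposal is correct and follows exactly the route the paper intends but leaves implicit: Remark \ref{rem ku} for the nonemptiness criterion (noting that in rank one $P_{k}=\hat{P}_{k}$, so the caveat at the end of that remark is moot), reduction of the two extra bullets to the hypotheses of Corollary \ref{cor a} (your identification $\ker(\ad N)\cap\text{im}\{(\ad N)^{2}\}=W(N)_{-2}\lm_{N}$ is the rank-one case of the remark following Proposition \ref{prop hatch}), and the $\sl$-isotypical bookkeeping forcing $\lm(\ell)=0$ for $\ell\geq3$, which gives $Gr_{-1}^{W(N)}(\lm_{N}/\langle N\rangle)\cong Gr_{-1}^{W(N)}\lm$ and hence the dimension. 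One small imprecision: $Gr_{-1}^{W(N)}(\lm_{N}/\langle N\rangle)$ need not itself be of type $(-1,0)+(0,-1)$ --- that is supplied by the CM-type replacement preceding Theorem \ref{thm structure} --- but since that replacement keeps the same lattice and $F^{0}$, your count $\frac{1}{2}\dim\left(Gr_{-1}^{W(N)}\lm\right)$ is unaffected.
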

\begin{rem}
\label{rem7.12} For purposes of directly generalizing the arguments
in \cite{C}, one requires not only that a boundary component quotient
be arithmetic, but to have $M$ of Hermitian type and $(\ad N)^{3}=0$.
The reader may have noticed that we ignored the (easy) case when $\overline{B(\sigma)}\cong(\mathbb{C}^{*})^{m}$,
although this is obviously defined over $\bar{\mathbb{Q}}$ and we
shall encounter such examples in $\S8$. This is because to have a
$(\mathbb{C}^{*})^{m}$ boundary component with $(\ad N)^{3}=0$,
$D_{M}$ would have to parametrize HS on $\mathfrak{m}$ of level
$2$; that is, it would already be classical.
\end{rem}

\section{Examples}

The authors expect, in a future work, to use Corollary \ref{cor b}
(and other results above) to treat systematically the MT domains for
simple Lie groups classified in \cite[sec. 4]{GGK}. Here we shall
restrict ourselves to a quick analysis of a few examples which motivated
this paper.\footnote{A more systematic treatment of one \emph{class} of examples has now been carried out in \cite[App. A]{KR}} The first is quite classical, and the first three just
compute (but very efficiently) the boundary components of some period
domains.

A slight difference to how the results are stated above,
is that we start with a Hodge representation\footnote{not necessarily of weight zero (recall the passage from $V$ to $\lm$ in the Introduction)} $V$ of $M$ on a smaller
vector space than $\lm$. This can be more convenient for checking
that $B(N)$ or $B(\sigma)$%
\footnote{The rank of $B(\sigma)$ ($:=\dim\langle\sigma\rangle$) is one in
all but the first example.%
} is nonempty, e.g. by producing an $F^{\bullet}\in \check{D}$ such that $(F^{\bullet},W(N)_{\bullet})$
is an $N$-polarized MHS. Having done this, we then content ourselves
with $I^{p,q}$ diagrams (\# of dots at the $(p,q)$-spot $=$ $\dim(I^{p,q})$)
for the LMHS on $V$ resp. $\lm$ that are parametrized by each boundary
component. These are easy to produce if one knows the $h^{p,q}$'s
for $\lm$ (which can be looked up in \cite[Ch. IV]{GGK} for simple
Lie algebras) corresponding to those chosen for $V$, and if one computes
the ranks of kernel and image of $(\ad N)^{k}$ acting on $\lm$ (left
to the reader). The structure theory of $M_{B(N)}$ and $\overline{B(N)}$
can then be read off these pictures from the results above.
\begin{example}
We begin with the familiar example where $D$ is the Siegel upper half-space, with
$M=Sp_{4}$, $\dim(V)=4$, $h^{1,0}=h^{0,1}=2$,\small 
\[
Q=\left(\begin{array}{cccc}
 &  & 1\\
 &  &  & 1\\
-1\\
 & -1
\end{array}\right),\; N_{1}=\left(\begin{array}{cccc}
 &  & 0\\
 &  &  & 0\\
1\\
 & 0
\end{array}\right),\; N_{2}=\left(\begin{array}{cccc}
 &  & 0\\
 &  &  & 0\\
0\\
 & 1
\end{array}\right),
\]
\normalsize 
\[
\text{and}\;N=N_{1}+N_{2},\;\sigma=\QQ_{\geq0}\langle N_{1},N_{2}\rangle .
\]
Since $\tilde{B}(sigma)\supset \tilde{B}(N_1)\supset D$ (cf. \cite{Ca2}), for $F^{\bullet}$ we may take anything in $D$. The standard $I^{p,q}$'s
(for $V$) are then: $$\includegraphics[scale=0.65]{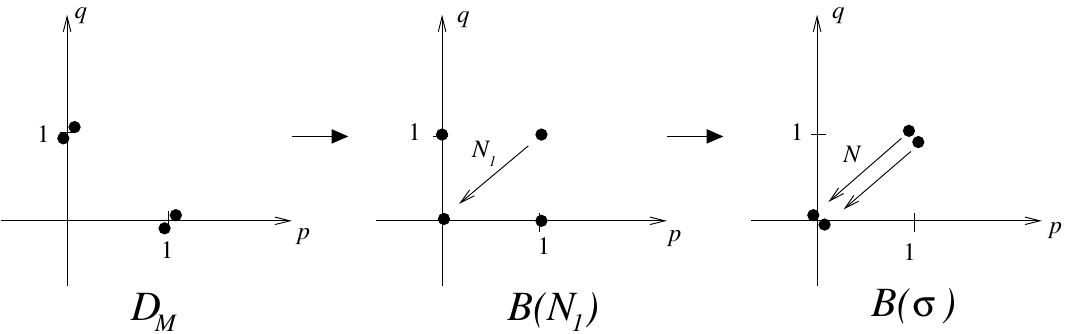}$$ while
the adjoint $I^{p,q}$'s are depicted in $$\includegraphics[scale=0.65]{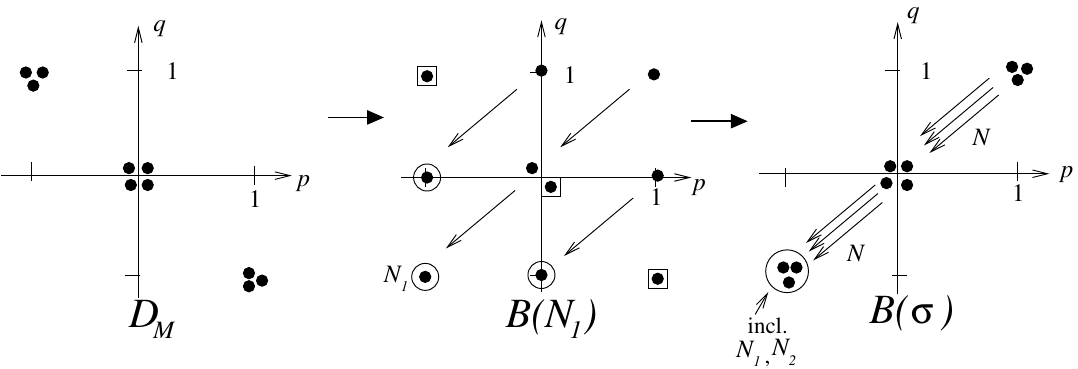}$$
with generators of $\lm_{\sigma}$ circled and generators of $\lz(\sigma)/\lm_{\sigma}\cong P_{0}$
boxed. (For $B(\sigma)$, $P_{0}=\{0\}$.)

Turning to boundary component structure, for $B(N_{1})$ we have 
\[
Gr_{0}^{W}M_{B(N_{1})}\cong SL_{2}
\]
over $\RR$ because the boxed stuff is $\sl$ and $\phi$ cannot be
$0$ (since the Hodge structures on the $P_{k}$ are nontrivial);
and 
\[
\left.e^{\langle N_{1}\rangle}\right\backslash W_{-1}M_{B(N_{1})}\,\cong\,\mathbb{G}_{a}^{\times2}.
\]
Consequently, $\overline{B(N_{1})}$ $\simeq$ a (noncompact) elliptic
modular surface. Geometrically, given a degeneration of genus 2 curves
$$\includegraphics[scale=0.8]{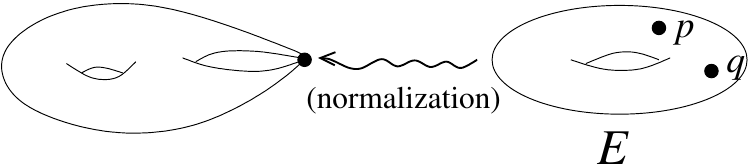}$$ $\overline{B(N_{1})}$ records
the isomorphism class (with $\Gamma$-level structure) of $E$ and
$AJ([p]-[q])\in J(E)$. 

For $B(\sigma)$, 
\[
\left\{ \begin{array}{c}
Gr_{0}^{W}M_{B(\sigma)}\text{ is trivial}\;\;\;\\
\left.e^{\langle\sigma\rangle}\right\backslash W_{-1}M_{B(\sigma)}\cong\mathbb{G}_{a}
\end{array}\right.
\]
implies $\overline{B(\sigma)}\cong\CC^{*}$. Given a degeneration
of genus 2 curves $$\includegraphics[scale=0.8]{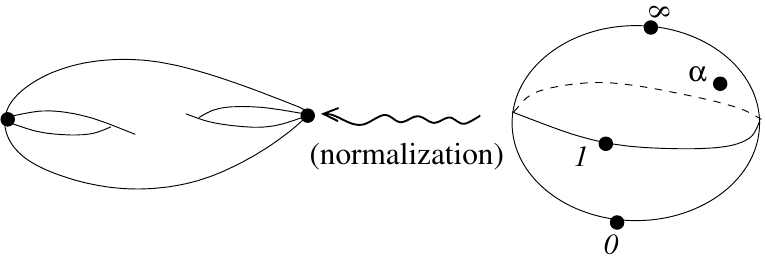}$$ the $\CC^{*}$
records $\alpha$ (i.e. the cross-ratio) \cite{CCK,Ca2}. The smooth
loci of the N\'eron $n$-gons in a minimal smooth compactification
of $\overline{B(N_{1})}$ can be thought of as unions of $\overline{B(\sigma)}$'s.

We can also consider $B(N)$, which has $I^{p,q}$ diagrams $$\includegraphics[scale=0.8]{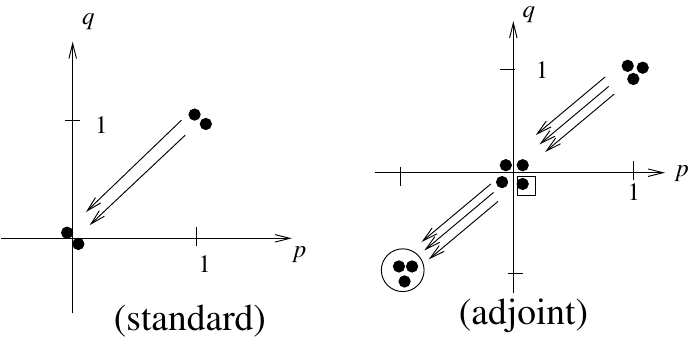}$$
While $P_{0}\neq\{0\}$, $\phi$ is trivial since the MHS is Hodge-Tate,
and so $Gr_{0}^{W}M_{B(N)}$ is once again trivial; on the other hand
\[
W_{-1}M_{B(N)}\cong\mathbb{G}_{a}^{\times3}.
\]
So in this example, $M_{B(N)}=M_{N}\subsetneq Z(N)$, and we have
$\overline{B(N)}\cong(\CC^{*})^{\times2}$. (In every other example
we consider here, it turns out that $M_{B(N)}=Z(N)$ {[}or $Z(\sigma)${]}.
This is because all the $\lg_{N}$'s which turn up are trivial, 1-dimensional,
or {[}over $\RR${]} $\sl$.)
\end{example}
\begin{example}
We now demonstrate the ease with which this method recovers the structural results on boundary components for the ``mirror quintic period domain'' in \cite{GGK4}.  Let $M=Sp_{4}$, $\dim(V)=4$, $h^{3,0}=h^{2,1}=h^{1,2}=h^{0,3}=1$, with rational basis $\gamma_3,\gamma_2,\gamma_1,\gamma_0$ in which\small
\[
Q=\left(\begin{array}{cccc}
 &  &  & 1\\
 &  & 1\\
 & -1\\
-1
\end{array}\right).
\]
\normalsize By \cite{GGK2}, up to a symplectic change of basis the
only possibilities for $N$ are (writing $A$ for a $2\times 2$ matrix, and $a>0>b$) \small
\[
N_{1}=\left(\begin{array}{cccc}
\\
a\\
c & b\\
d & c & -a & {}
\end{array}\right),\; N_{2}=\left(\begin{array}{cccc}
\\
\\
\\
a & {}\;{} & {}\;{} & {}\;{}
\end{array}\right),\;\begin{array}{c}
N_{3}=\left(\begin{array}{cc}
\\
A & {}\;{}
\end{array}\right)\\
\text{with }{}^{t}A=A>0
\end{array}.
\]
\normalsize To check that $B(N_k)\neq \emptyset$ in each case, take for $F^3\subset F^2\subset F^1$:
\begin{flalign*}
\;\; & \underline{k=1}:\; \langle \gamma_3 + \frac{d}{2a}\gamma_1\rangle\subset F^3+\langle\gamma_2+\frac{c}{a}\gamma_1 + \frac{d}{2a}\gamma_0\rangle \subset F^2+\langle \gamma_1\rangle&\\
\;\; & \underline{k=2}:\; \langle \gamma_2 - \sqrt{-1}\gamma_1\rangle \subset F^3 + \langle \gamma_3\rangle \subset F^2+\langle \gamma_0\rangle &\\
\;\; & \underline{k=3}:\; \text{see \cite[(I.C.19)]{GGK4}}.
\end{flalign*} 
(It is a general feature of the nonclassical setting that ``any $F^{\bullet}\in D$'' will not do.)

Now, for the period domain itself, we have $I^{p,q}$ diagrams $$\includegraphics[scale=0.8]{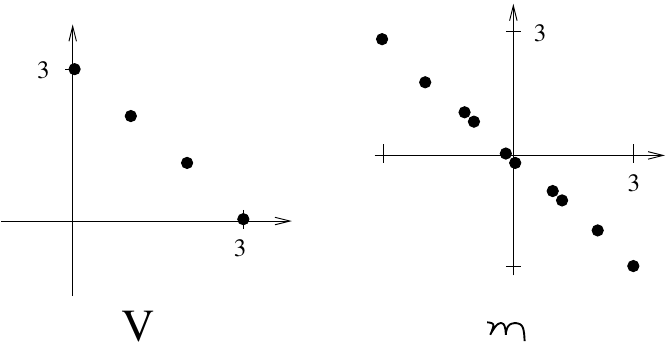}$$
Next, the LMHS in $B(N_{1})$ have $I^{p,q}$'s $$\includegraphics[scale=0.8]{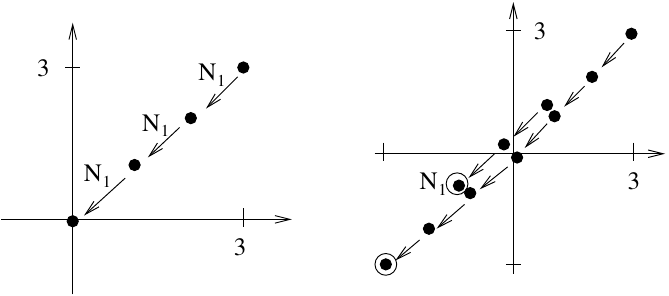}$$
and $M_{B(N_{1})}\cong M_{N_{1}}\cong\mathbb{G}_{a}^{\times2}$, $\overline{B(N_{1})}\cong\CC^{*}$.

Turning to $B(N_{2})$, one has $$\includegraphics[scale=0.8]{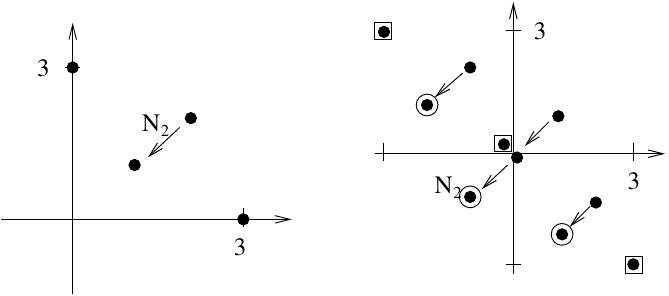}$$
and from 
\[
\left\{ \begin{array}{c}
Gr_{0}^{W}M_{B(N_{2})}\cong SL_{2}\;\text{(over }\RR\text{)}\\
e^{\langle N_{2}\rangle}\backslash W_{-1}M_{B(N_{2})}\cong\mathbb{G}_{a}^{\times2}
\end{array}\right.
\]
we conclude that $\overline{B(N_{2})}\;\simeq$ a noncompact%
\footnote{Of course, it admits an algebraic compactification, but this time
the added points do not correspond to LMHS (essentially by transversality
considerations).%
} elliptic modular surface (in spite of the fact that it does not fall
under Proposition \ref{prop hatch}).

Finally, for $B(N_{3})$, the $I^{p,q}$'s are $$\includegraphics[scale=0.8]{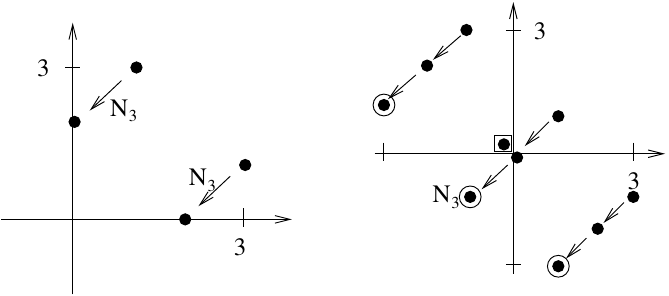}$$
so
\[
\left\{ \begin{array}{c}
Gr_{0}^{W}M_{B(N_{3})}\cong\mathbb{G}_{m}\;\;\;\;\;\\
e^{\langle N_{2}\rangle}\backslash W_{-1}M_{B(N_{3})}\cong\mathbb{G}_{a}^{\times3}
\end{array}\right.
\]
from which (by Theorem \ref{thm structure}) $\overline{B(N_{3})}$
is a CM elliptic curve.
\end{example}
\begin{example}
This is related to a running example in \cite[Chap. 0]{KU} and uncovers a structure not noticed there. We take $M=SO(4,1)$, $\dim(V)=5$, $h^{2,0}=h^{0,2}=2$, $h^{1,1}=1$,\small $$
Q=\left( \begin{array}{c|cccc} 1  \\ \hline & -1 \\ & & -1 \\ & & & -1 \\ & & & & -1 \end{array} \right)
\; , \;\;
N=\left( \begin{array}{c|cccc} & 1 \\ \hline 1 & & 1 \\ & -1 \\ & & & 0 \\ & & & & 0 \end{array} \right) . 
$$\normalsize Writing $u,v_{1},v_{2},v_{3},v_{4}$ for the basis of
$V$, we have $W_{0}V=\langle u-v_{2}\rangle$ and $W_{2}V=\langle v_{1},v_{3},v_{4},u-v_{2}\rangle$.
For an $F^{\bullet}\in\check{D}_{M}$ for which $e^{\langle N\rangle}F^{\bullet}$
is a nilpotent orbit, we can take $F^{2}=\langle u+v_{2},v_{3}+iv_{4}\rangle$
and $F^{1}=\langle v_{1},u+v_{2},v_{3}+iv_{4}\rangle$, which satisfies
$Q(F^{2},F^{1})=0$ and polarizes $Gr_{4}^{W}$ and $Gr_{2}^{W}\ker N$.

The $I^{p,q}$'s for the standard representation are $$\includegraphics[scale=0.8]{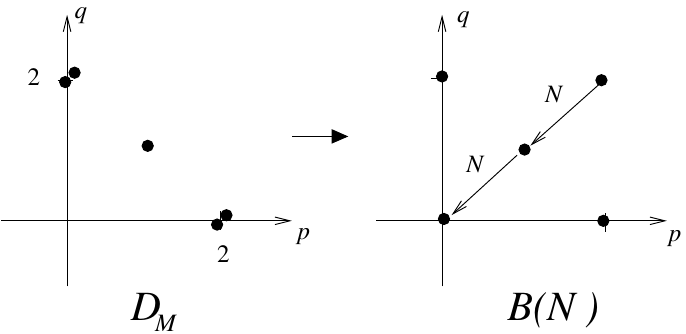}$$
From the adjoint representation we instead obtain $$\includegraphics[scale=0.8]{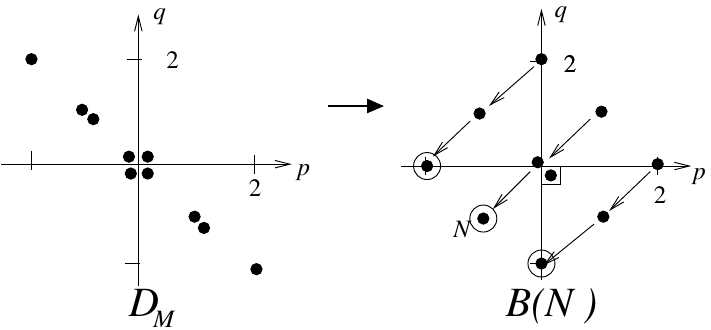}$$
As far as $I^{p,q}$'s go, $B(N)$ is essentially the only type of
boundary component for the period domain $D_{M}$. We have 
\[
\left\{ \begin{array}{c}
Gr_{0}^{W}M_{B(N)}\cong\mathbb{G}_{m}\;\;\;\;\;\;\;\;\;\;\\
W_{-1}M_{B(N)}\cong M_{N}\cong\mathbb{G}_{a}^{\times3}
\end{array}\right.
\]
and hence that $\overline{B(N)}$ is a CM elliptic curve.
\end{example}
\begin{example}\label{exa:(Carayol's-example-)}
Next is an analysis of Carayol's example \cite{C}, which inspired the writing of this paper.
This takes a bit more setting up, and (unlike the other examples)
involves distinct $'G$ and $M={}'G^{\ad}$: in this case,
\[
\left\{ \begin{array}{c}
'G(\RR)\cong U(2,1)\;\;\;\\
M(\RR)\cong SU(2,1)^{\ad}
\end{array}.\right.
\]
Let $V$ be a 6-dimensional vector space, $Q:V\times V\to\QQ$ an
alternating nondegenerate bilinear form, and $d$ be a square-free
integer. We fix a ring homomorphism 
\[
\mu:\mathbb{F}:=\QQ(\sqrt{-d})\hookrightarrow\text{End}_{\QQ}(V)
\]
such that in the decomposition $V_{\mathbb{F}}=V_{+}\oplus V_{-}$
into eigenspaces for the conjugate complex embeddings of $\mathbb{F}$,
$V_{+}$ is $Q$-isotropic. Consider a $Q$-polarized Hodge structure%
\footnote{The notation means in particular that $\mu(\mathbb{F})\subset\text{End}_{'\vf}(V)$.%
} 
\[
'\vf:\mathbb{U}\to\text{Aut}(V,Q,\mu)
\]
with $h_{+}^{3,0}=h_{+}^{2,1}=h_{+}^{1,2}=1$ ($\implies$ $h^{3,0}=1$
and $h^{2,1}=2$). Finally, set $\mathbf{h}(v,w):=-\sqrt{-d}Q(v,\bar{w})$
and
\[
'G:=\text{Aut}(V,Q,\mu)\cong\text{Aut}(V_{+},\mathbf{h}).
\]
Then, as in $\S1$, 
\[
'G(\RR).{}'\vf\cong M(\RR).(\Ad\circ{}'\vf)=D_{M}
\]
and $'G$ is the Mumford-Tate group of a generic Hodge structure (on
$V$) in the left-hand orbit.

More precisely, we may choose $Q$ and an $\mathbb{F}$-basis $\{\gamma_{1},\gamma_{2},\gamma_{3}\} \subseteq V_{+}$
so that
\[
[\mathbf{h}]_{\gamma}=\left(\begin{array}{ccc}
 &  & -1\\
 & 1\\
-1
\end{array}\right),
\]
and then $\lm_{\RR}$ identifies with elements of $\text{End}(V_{+,\CC})$
of the form 
\[
\left(\begin{array}{ccc}
A & B & C\\
D & E & \bar{B}\\
G & \bar{D} & -\bar{A}
\end{array}\right),\;\left\{ \begin{array}{c}
C,E,G\in i\RR\\
A,B,D\in\CC
\end{array}\right..
\]
(We can define a $'\vf$ by $F^{3}V_{+}=\langle\gamma_{2}\rangle$,
$F^{2}V_{+}=\langle\gamma_{1}+\gamma_{3},\gamma_{2}\rangle$; this
won't necessarily be the $F^{\bullet}$ giving the nilpotent orbits
below, which are written out in \cite[$\S$2.3]{C}.) Up to conjugation by $M(\RR)$, and under this identification,
the only nilpotents in $\lm$ giving nilpotent orbits are
\[
N_{1}=\left(\begin{array}{ccc}
0 & \alpha & ia\\
0 & 0 & \bar{\alpha}\\
0 & 0 & 0
\end{array}\right)\;,\;\; N_{2}^{\pm}=\left(\begin{array}{ccc}
0 & 0 & \pm ib\\
0 & 0 & 0\\
0 & 0 & 0
\end{array}\right)
\]
where $\alpha\in\mathbb{F}$, $a\in\QQ$, and $b\in\QQ_{+}$. We refer
to \cite[$\S$2.2]{C} for the proof.

Turning to $I^{p,q}$'s, the generic element of $D_{M}$ has $$\includegraphics[scale=0.8]{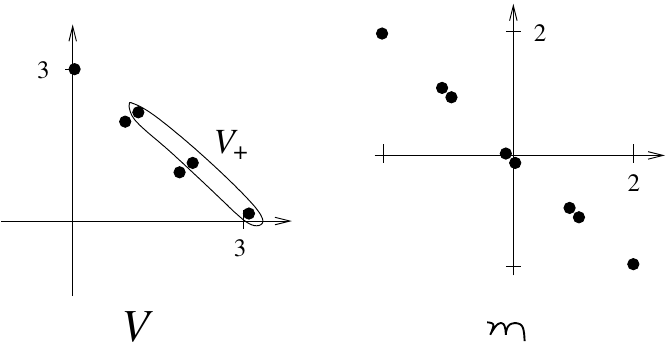}$$
For the three types of boundary components, we have $$\includegraphics[scale=0.8]{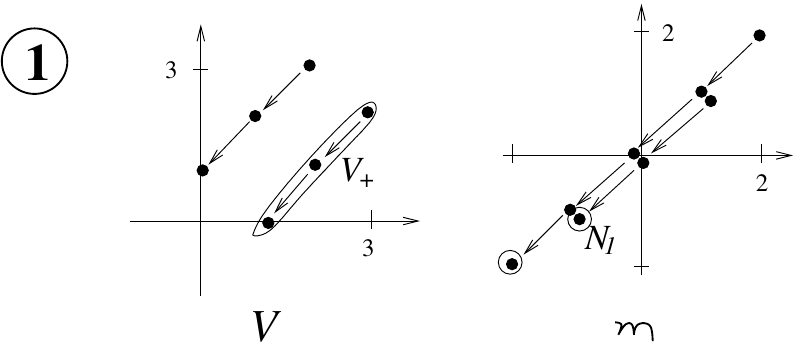}$$
which gives $M_{B(N_{1})}\cong M_{N_{1}}\cong\mathbb{G}_{a}^{\times2}$,
$\overline{B(N_{1})}\cong\mathbb{C}^{*}$; $$\includegraphics[scale=0.8]{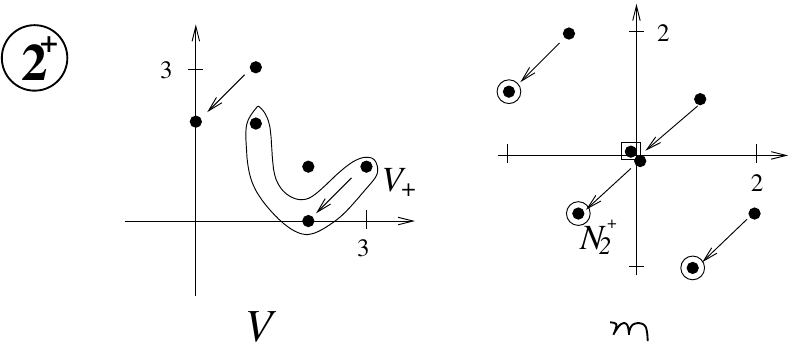}$$
which yields 
\[
\left\{ \begin{array}{c}
Gr_{0}^{W}M_{B(N_{2}^{+})}\cong\mathbb{G}_{m}\;\;\;\;\;\;\;\;\\
e^{\langle N_{2}^{+}\rangle}\backslash W_{-1}M_{B(N_{2}^{+})}\cong\mathbb{G}_{a}^{\times2}
\end{array}\right.
\]
 making $\overline{B(N_{2}^{+})}$ a CM elliptic curve; and $$\includegraphics[scale=0.8]{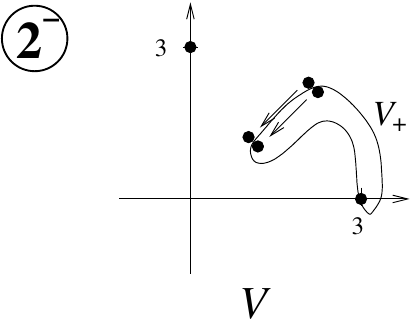}$$
with everything else the same as in the $N_{2}^{+}$ case.
\end{example}
\begin{example}\label{G2 ex}
Our last example treats an exceptional group, and sets the stage for the analysis of fundamental adjoint varieties in \cite{KR}; it should also be related to degenerations of the surfaces studied in \cite{Ka2}.  Put $M=G_{2}$, $\dim V=7$, $h^{2,0}=h^{0,2}=2$, $h^{1,1}=3$,\small 
$$
Q=\left( \begin{array}{c|c} I_3 \\ \hline & -I_4 \end{array} \right)\; ,\;\;
N=\left( \begin{array}{ccc|cccc} & & -1 & 1 \\ & 0 & & & 0 \\ 1 & & & & & -1 & 0 \\ \hline 1 & & & & & -1 & 0 \\ & 0 & & & 0 \\ & & -1 & 1 \\ & & 0 & 0 \end{array} \right)
$$\normalsize (note $N^{2}=0$). Inside $SO(3,4)$, $G_{2}$ is cut
out by preserving a certain $3$-tensor. In the present $u_{1},u_{2},u_{3},v_{1},v_{2},v_{3},v_{4}$
basis, $\lg_{2}$ consists of matrices of the form\small  $$
\left( \begin{array}{ccc|cccc} 0 & f-a & -b-e & A & B & C & H-F \\ a-f & 0 & d-c & D & E & F & C-G \\ b+e & c-d & 0 & G & H & -A-E & D-B \\ \hline A & D & G & 0 & -a & -b & -d \\ B & E & H & a & 0 & -c & -e \\ C & F & -A-E & b & c & 0 & -f \\ H-F & C-G & D-B & d & e & f & 0 \end{array} \right).
$$\normalsize Clearly $N$ is of this form.

On $V$, $N$ induces the weight filtration 
\[
\left\{ \begin{array}{ccc}
(W_{3} & = & V)\;\;\;\;\;\;\;\;\;\;\;\;\;\;\;\;\;\;\;\;\;\;\;\;\;\;\;\;\;\;\;\;\;\;\;\;\\
W_{2} & = & \langle u_{2},v_{2},v_{4},u_{1}+v_{3},u_{3}+v_{1}\rangle\\
W_{1} & = & \langle u_{1}+v_{3},u_{3}+v_{1}\rangle\;\;\;\;\;\;\;\;\;\;\;\;\;\;\;\;\\
(W_{0} & = & \{0\})\;\;\;\;\;\;\;\;\;\;\;\;\;\;\;\;\;\;\;\;\;\;\;\;\;\;\;\;\;\;\;\;\;\;\;\;
\end{array}\right.
\]
and it is easy to see that
\[
\left\{ \begin{array}{ccc}
F^{2} & := & \langle v_{1}+iv_{3},v_{2}-iv_{4}\rangle\;\;\;\;\;\;\;\;\;\;\;\;\;\;\;\;\\
F^{1} & := & \langle u_{1},u_{2},u_{3},v_{1}+iv_{3},v_{2}-iv_{4}\rangle
\end{array}\right.
\]
produces an $N$-polarized MHS and belongs to $\check{D}_{M}$. In
fact, this $F^{\bullet}$ defines a HS $\vf$ on $V$ polarized by
$Q$, and the differential of $\vf$ is a multiple (writing $\Delta=\text{diag}\{1,-1\}$)
of $$
\left( \begin{array}{c|cc} 0 & 0 & 0 \\ \hline 0 & 0 & \Delta \\ 0 & -\Delta & 0 \end{array} \right)
$$which is clearly in $\lg_{2}$. The $I^{p,q}$ pictures for $V$ are
$$\includegraphics[scale=0.8]{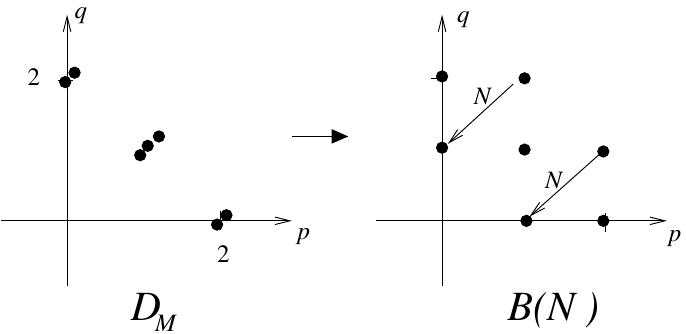}$$ As for the adjoint representation,
we have Hodge numbers $h^{-2,2}=1$, $h^{-1,1}=4=h^{0,0}$ and $I^{p,q}$
diagrams $$\includegraphics[scale=0.8]{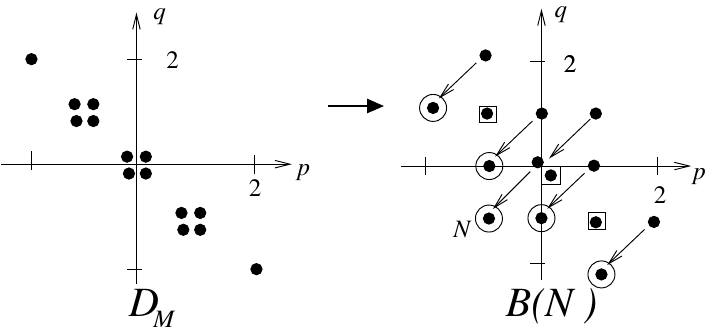}$$ Since $\phi$ is
nontrivial, $Gr_{0}^{W}M_{B(N)}\cong SL_{2}$ (over $\RR$); moreover,
$e^{\langle N\rangle}\backslash W_{-1}M_{B(N)}\cong\mathbb{G}_{a}^{\times4}$,
and we find that $\overline{B(N)}$ is isomorphic to a family of (compact)
complex 2-tori over a modular curve.

As far as $I^{p,q}$ types are concerned, it turns out that there
are two further types of rank one boundary components for this $G_{2}$-domain.
On $V$, the corresponding diagrams are $$\includegraphics[scale=0.8]{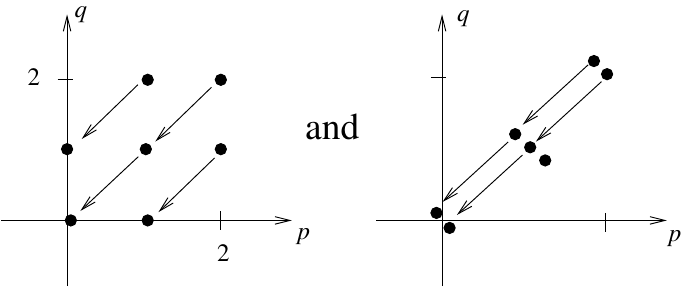}$$
We won't pursue a systematic treatment of this here.
\end{example}

\section{A $G_{2}$-variation of Hodge structure}

In \cite[Ch. IV]{GGK} it was shown that, up to Tate twists, there
were three possible gap-free collections of Hodge numbers for effective
rank 7 Hodge structures with Mumford-Tate group $G_{2}$:

$(a)$ $(2,3,2)$ in weight 2 (cf. Example \ref{G2 ex});

$(b)$ $(1,2,1,2,1)$ in weight 4; and

$(c)$ $(1,1,1,1,1,1,1)$ in weight 6.\\
For weight 6, the complete list of possibilities is: $(c)$; twists
of $(a)$ and $(b)$; and $(2,0,0,3,0,0,2)$.

It was also asked in {[}\emph{op. cit.}{]} whether $G_{2}$ arises
as the Mumford-Tate group of a \emph{motivic} Hodge structure. This
is the transcendental question most closely related to Serre's famous
problem on motivic Galois groups \cite{Se}, and has received less
attention than the corresponding one for $\ell$-adic monodromy. It
turns out that recent work of Dettweiler and Reiter \cite{DR} on
Serre's problem produces a family of quasi-projective varieties over
$\mathcal{S}:=\mathbb{P}^{1}\backslash\{0,1,\infty\}$, the lowest
weight part of whose degree-$6$ cohomology carries a VHS of type
$(c)$ and MT group $G_{2}$. By \cite[Lemma 4]{An} (also cf. \cite[7.5]{De3}),
this is also the MT group of $W_{6}H^{6}$ of a very general fiber.

Consider the family\begin{equation}\xymatrix{
\mathcal{X} \ar @/^1pc/ [rr]^{\pi} \ar @{^(->} [r] & \mathcal{Y} \ar [r] & \mathcal{S} 
}\end{equation} of 6-folds defined by
\[
Y_{s}:=\left.\left(\PP^{1}\backslash\{0,1,\infty\}\right)^{\times6}\right\backslash \left\{ \left(\prod_{i=1}^{5}(x_{i}-x_{i+1})\right)(x_{6}-s)=0\right\} 
\]
\[
X_{s}:=\left\{ y^{2}=s(x_{6}-s)\prod_{i=1}^{5}(x_{i}-x_{i+1})\prod_{i=1,3,5}x_{i}\prod_{i=1,2,4,6}(x_{i}-1)\right\} \subset\CC^{*}\times Y_{s}
\]
with involution
\[
\sigma:\mathcal{X}\to\mathcal{X}
\]
sending $y\mapsto-y$. Denoting by $\left(\cdot\right)^{-}$ the $\sigma$-$(-1)$-eigenspace,
the local system
\[
\mathbb{V}:=\left(Gr_{6}^{W}R^{6}\pi_{!}\QQ\right)^{-}
\]
(with stalks $V_{s}$) induces a monodromy representation 
\[
\rho:\pi_{1}(\mathcal{S}_{\CC,s_{0}}^{an})\to Aut\left(V_{s_{0}}\right).
\]
with associated (geometric) VHS $\mathcal{V}$ and stalks $V_{s}$.
The \emph{geometric monodromy group} $\Pi$ of $\mathbb{V}$ is the
identity connected component of the ($\QQ$-)Zariski closure of the
image of $\rho$. Write $\mathcal{V}$ for the associated VHS and
$M_{\mathcal{V}}$ for its MT group; we recall that this identifies
with the MT groups of fibers $\mathcal{V}_{s}$ outside a countable
union of analytic subvarieties. Denoting unipotent Jordan blocks of
length $n$ by $J(n)$, we have the 
\begin{thm}
\cite{DR} (a) The monodromy types of $\mathbb{V}$ about $0$, $1$,
$\infty$ are $(\mathbf{-}\mathbf{1})^{\oplus4}\oplus\mathbf{1}^{\oplus3}$,
$J(2)^{\oplus2}\oplus J(3)$, and $J(7)$, respectively.

(b) $\Pi=G_{2}$.%
\footnote{In fact, they state this for the $\bar{\QQ}_{\ell}$-closure of $\text{im}(\rho)$
(cf. the proof of Theorem 3.3.1 in \cite{DR}), but since $\rho$
is defined rationally it makes no difference. %
}\end{thm}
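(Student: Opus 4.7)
The plan is to obtain (a) by direct local analysis of the geometry of $\mathcal{X} \to \mathcal{S}$ at each of the three punctures, and (b) by combining those local Jordan types with the Hodge-theoretic polarization, irreducibility coming from principal unipotency, and the short classification of irreducible reductive subgroups of $SO(7)$.

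For (a), I would study the degeneration of the double cover $X_s \to Y_s$ at $s \in \{0, 1, \infty\}$ separately. At $s = 0$, the factor $s$ inside the square root degenerates the cover to a branched surface of a different type; the Leray spectral sequence combined with the resulting change in the branch divisor gives the involutory (order-two) monodromy, and a count of $\sigma$-antiinvariant vanishing cycles fixes the eigenvalue multiplicities $4$ and $3$. At $s = 1$, the divisor $x_6 = s$ coalesces with an existing branch component (a fixed $x_6 = 1$ locus), producing an ordinary quadratic degeneration whose Picard--Lefschetz analysis yields the $J(2)^{\oplus 2} \oplus J(3)$ pattern. At $s = \infty$, a standard change of coordinate $s \mapsto 1/t$ brings the degeneration into a form where one can read off principal (maximally non-semisimple) unipotent behavior, i.e.\ $J(7)$. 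In each case the Jordan block structure follows from counting independent vanishing cycles and applying the monodromy theorem for semistable reductions (so that some power of the monodromy is unipotent) plus the fact that $\sigma$ is defined over $\mathcal{S}$.

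For (b), the strategy has five steps. First, $\mathbb{V}$ is the antiinvariant part of a polarized VHS of even weight, hence the polarization restricts to a nondegenerate symmetric form $Q$ on $V$, so $\Pi \subseteq SO(V,Q) \cong SO(7)$ (over $\mathbb{Q}$). Second, $\Pi$ is reductive by Deligne's semisimplicity theorem for polarizable local systems. Third, the $J(7)$ monodromy at $\infty$ acts irreducibly on $V$ (its only invariant subspaces form the single complete flag $0 \subsetneq \ker(g-1) \subsetneq \ker(g-1)^2 \subsetneq \cdots \subsetneq V$, and the involution at $0$ does not preserve this flag, as can be checked from the Jordan types), so $\Pi$ acts irreducibly. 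Fourth, the connected reductive subgroups of $SO(7)$ acting irreducibly on the standard representation are, by the classification of irreducible representations of simple Lie algebras of dimension $7$, exactly $SO(7)$ itself and $G_2$. Fifth, to rule out $SO(7)$ I would compute the Katz rigidity index
\[
\mathrm{rig}(V) = -(\dim V)^2 + \sum_{i \in \{0,1,\infty\}} \dim Z(g_i) = -49 + 25 + 19 + 7 = 2,
\]
so $\mathbb{V}$ is cohomologically rigid, and then either invoke Katz's algorithm (peeling off middle convolutions and scalar twists down to a rank-one system, as in \cite{DR}) to identify the symmetry algebra, or equivalently produce an invariant alternating $3$-tensor on $V$: the dimension count $\dim(\wedge^3 V)^{\Pi}$ can be bounded below using the traces of $g_0, g_1, g_\infty$ on $\wedge^3 V$ together with the rigidity-based fact that invariants are independent of the base point, giving a nonzero $G_2$-defining $3$-form preserved by $\Pi$.

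The main obstacle is the last step — showing $\Pi = G_2$ rather than $SO(7)$. The rigidity index alone does not distinguish them (both would be consistent with the formal count), so one genuinely needs to exhibit either the middle-convolution derivation \emph{à la} Katz--Dettweiler--Reiter, which constructs the system with manifest $G_2$-symmetry and tracks it through each convolution step, or to carry out a character computation on low tensor powers of $V$ using the explicit Jordan types from (a). The former is the route taken in \cite{DR} and, given the Katz framework, is the cleaner of the two; the latter is more elementary but requires careful bookkeeping of the Jordan structure on $\wedge^{\bullet} V$.
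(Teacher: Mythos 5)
This statement is not proved in the paper at all: it is quoted directly from Dettweiler--Reiter \cite{DR}, the only added content being the footnote observing that \cite{DR} state (b) for the $\bar{\QQ}_{\ell}$-closure of $\text{im}(\rho)$, which is harmless because $\rho$ is defined over $\QQ$. So your task was in effect to reconstruct \cite{DR}, and as it stands your sketch does not do so. For (a), the Picard--Lefschetz/vanishing-cycle analysis of the $6$-fold family at $s=0,1,\infty$ is only asserted (``a count of $\sigma$-antiinvariant vanishing cycles fixes the eigenvalue multiplicities $4$ and $3$'', etc.); these counts are precisely the content to be proved, and in \cite{DR} they are not obtained by direct local analysis of the family but by constructing $\mathbb{V}$ through Katz's middle convolution algorithm, where the effect of each convolution and twist on the local Jordan types is controlled, the 6-fold family of $\S9$ being the geometric realization of that construction after the fact.

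For (b) the decisive step is conceded rather than carried out: you say yourself that the rigidity index does not separate $G_{2}$ from $SO(7)$ and that one must either run the middle-convolution derivation of \cite{DR} or do an unexecuted character/invariant-tensor computation, so the proposal does not close the dichotomy it sets up. Two further points would need repair even granting the local data. First, the classification step is incomplete: the connected reductive subgroups of $SO(7)$ acting irreducibly on the standard representation are not just $SO(7)$ and $G_{2}$ but also the principal $PSL_{2}$ acting through $\mathrm{Sym}^{6}$ of its $2$-dimensional representation; it must be excluded separately (here one can do so because its nontrivial unipotent elements are regular, i.e.\ of type $J(7)$, incompatible with the $J(2)^{\oplus2}\oplus J(3)$ monodromy at $1$ --- note the semisimple type at $0$ does \emph{not} exclude it). Second, the irreducibility argument is not valid as stated: that ``the involution at $0$ does not preserve the flag \ldots{} as can be checked from the Jordan types'' cannot be checked from the Jordan types, since conjugacy classes of the local monodromies alone do not determine whether the group they generate acts irreducibly; one needs either the explicit matrices coming out of the convolution construction or an Euler-characteristic/rigidity argument ruling out proper subsystems with the allowed local data.
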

\begin{cor}
(i) $M_{\mathcal{V}}=G_{2}$; and (ii) $\mathcal{V}$ has Hodge type
(c).\end{cor}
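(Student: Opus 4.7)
The plan is to deduce (ii) from the local monodromy at $\infty$, and then to bootstrap to (i) via Andr\'e's theorem relating geometric monodromy and the Mumford--Tate group.

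For (ii), part (a) of the theorem gives that $T_\infty$ has Jordan type $J(7)$, so its logarithm $N_\infty\in\mathfrak{m}_\QQ$ acts as a single Jordan block of size $7$ on the rank-$7$ space $V$. A Jacobson--Morosov triple through $N_\infty$ exhibits $V$ as the irreducible $\sl$-representation $V(6)$, and the limit weight filtration $W(N_\infty)_\bullet$ (centered at the VHS weight $6$) has $\dim Gr^W_{6+2k}=1$ for $-3\le k\le 3$ with no odd-weight pieces. Each one-dimensional $\QQ$-graded piece of even weight $2m$ carries a polarizable pure Hodge structure of type $(m,m)$, because the conjugate-symmetry $\overline{I^{p,q}}=I^{q,p}$ on a one-dimensional rational vector space forces $p=q$. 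Hence the LMHS at $\infty$ is Hodge--Tate with $\dim I^{p,p}=1$ for $p=0,\ldots,6$ and all other $I^{p,q}$ vanishing. The Hodge filtrations of the VHS and of the LMHS have the same dimensions, so $\dim F^p=7-p$ and the Hodge numbers of $\mathcal{V}$ are $(1,1,1,1,1,1,1)$, i.e.\ type $(c)$.

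For (i), we invoke Andr\'e's theorem (\cite[Thm.~1]{An}; cf.\ \cite[7.5]{De3}): the geometric monodromy group $\Pi$ sits as a normal subgroup of the derived group $M_{\mathcal{V}}^{\mathrm{der}}$. Since $Q$ is symmetric (weight $6$) and the Hodge numbers from (ii) force signature $(4,3)$,
\[
G_2 \;=\; \Pi \;\triangleleft\; M_{\mathcal{V}}^{\mathrm{der}} \;\subseteq\; M_{\mathcal{V}} \;\subseteq\; \mathrm{Aut}(V,Q)^0 \cong SO(4,3).
\]
A classical consequence of Dynkin's classification is that $G_2$ is maximal among proper connected reductive subgroups of $SO(7)$. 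Therefore $M_{\mathcal{V}}^{\mathrm{der}}$ is either $G_2$ or all of $SO(7)$; the latter option would make $G_2$ a proper nontrivial normal subgroup of the simple group $SO(7)$, contradiction. Hence $M_{\mathcal{V}}^{\mathrm{der}}=G_2$. Since $G_2$ acts irreducibly on $V_\CC$, Schur's lemma gives $Z_{SO(V,Q)}(G_2)\subset\{\pm\mathrm{id}\}$, so no nontrivial central torus can enlarge $M_{\mathcal{V}}^{\mathrm{der}}$ inside $SO(V,Q)$; thus $M_{\mathcal{V}}=G_2$.

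The only non-formal step is the passage from the $\sl$-structure of the $J(7)$ monodromy to the Hodge--Tate vanishing of the $I^{p,q}$; after that, Andr\'e's normality theorem combined with the maximality of $G_2$ in $SO(7)$ finishes (i) without further computation. A direct proof of (i) (constructing by hand a $G_2$-invariant Hodge $3$-tensor not invariant under $SO(7)$) would be substantially more work and is neatly sidestepped by this route.
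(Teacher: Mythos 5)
Your argument is correct, but it runs in a different order and on different inputs than the paper's. For (ii) the paper first establishes (i) and then leans on the classification recalled at the start of $\S 9$ (for weight $6$ and Mumford--Tate group $G_{2}$ the only Hodge-number options are $(c)$, twists of $(a)$,$(b)$, and $(2,0,0,3,0,0,2)$), eliminating the competitors by noting that the $J(7)$ block rules out isotriviality and forces $N^{6}\neq0$, hence level $6$; alternatively it exhibits a holomorphic $6$-form to get $h^{6,0}\neq 0$. You instead read the Hodge numbers off directly: the $J(7)$ monodromy gives a limit mixed Hodge structure whose rational weight-graded pieces are one-dimensional in the even weights $0,2,\dots,12$, hence Hodge--Tate, and equality of $\dim F^{p}$ for the VHS and its limit yields $(1,1,1,1,1,1,1)$ outright. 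This is self-contained (no appeal to the $G_{2}$ classification and no dependence on (i)) and in fact proves slightly more than the elimination argument. For (i) both proofs start from the normality statement of Andr\'e/the Theorem of the Fixed Part, $G_{2}=\Pi\trianglelefteq M_{\mathcal{V}}^{\mathrm{der}}\leq M_{\mathcal{V}}$, but the paper finishes in one stroke with $N_{SL_{7}}(G_{2})=G_{2}$, whereas you use $M_{\mathcal{V}}\subseteq \mathrm{Aut}(V,Q)$, Dynkin's maximality of $G_{2}$ in $SO(7)$ together with simplicity of $SO(7)$ to pin down the derived group, and Schur's lemma to kill a possible central torus; your mention of the signature $(4,3)$ is unnecessary (only the symmetry of $Q$, i.e.\ even weight, enters). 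Both routes are valid; the paper's is shorter on (i), yours is more economical and more informative on (ii).
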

\begin{proof}
By the Theorem of the Fixed Part \cite{Sc}, $\Pi$ is normalized
by $M_{\mathcal{V}}$ (cf. \cite{An}), and so $G_{2}\trianglelefteq M_{\mathcal{V}}\leq SL(V_{s})$.
Using the fact that $N_{SL_{7}}(G_{2})=G_{2}$, $(i)$ follows at
once.

For $(ii)$, one can argue in two ways. On the one hand, the presence
of the $J(7)$ block means that $\mathcal{V}$ is not isotrivial ($\implies$
$(2,0,0,3,0,0,2)$ is impossible), and that $N^{6}\neq0$, which is
impossible for a VHS of level $<6$. Alternatively, one can show that
$\frac{dx_{1}\wedge\cdots\wedge dx_{6}}{y}$ extends to an anti-invariant
holomorphic 6-form on a $\sigma$-compatible good compactification
of $X_{s}$. Noting that $\mathbb{V}^{\vee}\cong\left(W_{6}R^{6}\pi_{*}\QQ\right)^{-}$,
this shows that
\[
\{0\}\neq\left(\mathcal{V}^{\vee}\right)^{6,0}\cong\left(\mathcal{V}^{0,6}\right)^{\vee}
\]
which at least rules out Hodge types \emph{(a)} and \emph{(b)}.
\end{proof}
Of course, the Theorem and Corollary are valid for $\mathbb{V}^{\vee}$
and $\mathcal{V}^{\vee}$ as well.

Each of the Hodge types $(a)$-$(c)$ corresponds to a projection%
\footnote{see \cite{GGK}, sections VI.B and IV.F.%
} on the weight diagram for the standard 7-dimensional irrep of $G_{2}$:
$$\includegraphics[scale=0.6]{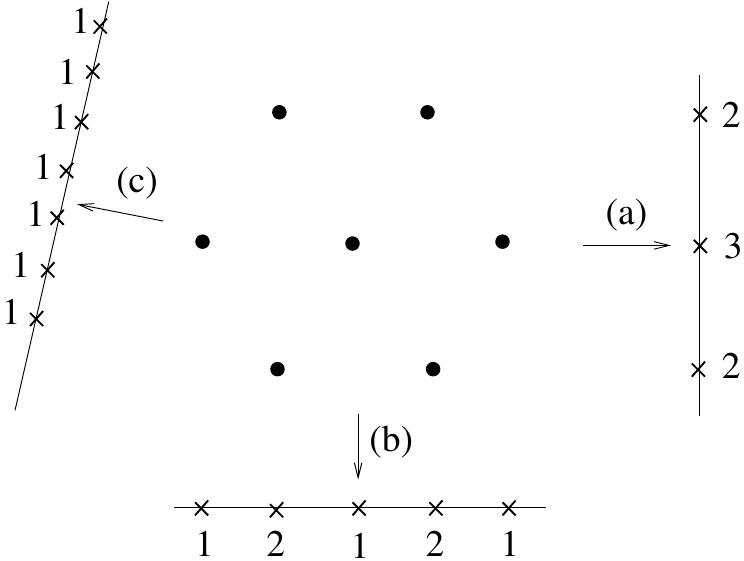}$$ Fix a Hodge type. Reasoning
heuristically, from this picture together with the root diagram for
$\mathfrak{g}_{2}$, $$\includegraphics[scale=0.6]{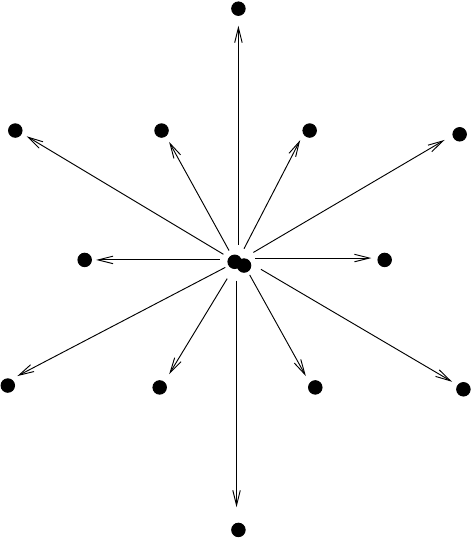}$$ one
may read off enough information about the possibilities for nilpotent
$N\in\mathfrak{g}_{2}$ satisfying $N(F^{\bullet})\subset F^{\bullet-1}$,
to classify the $I^{p,q}$-types of their associated limit mixed Hodge
structures (or boundary components).%
\footnote{We shall have more to say about this approach in the future work.%
} For type $(c)$, these are $$\includegraphics[scale=0.65]{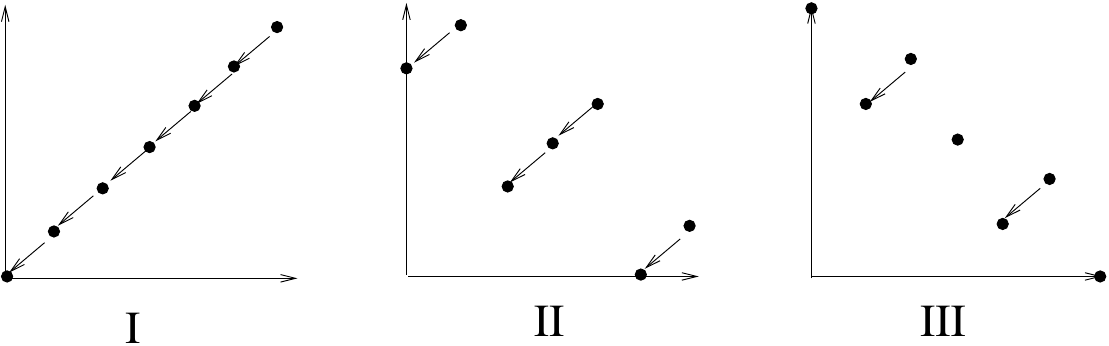}$$
the first two of which match the degenerations of $\mathcal{V}$ (or
$\mathcal{V}^{\vee}$) at $\infty$ and $1$ (resp.). It would be
very interesting to compute the LMHS at these points. In particular,
the adjoint $I^{p,q}$ diagram for type I shows that the corresponding
$\overline{B(N)}$ is a $\mathbb{C}^{*}$ which classifies the extension
of $\ZZ(-5)$ by $\ZZ(0)$ (or dually, $\ZZ(-6)$ by $\ZZ(-1)$).
One could also try to classify all type $(c)$ VHS over $\mathcal{S}_{\CC}^{an}$
with a type I and type II monodromy point, in the spirit of Doran
and Morgan \cite{DM}. We put these problems into a broader context
in the next section.

Finally, we would be remiss not to point out that the M-T domains
for type $(b)$ and $(c)$ both have boundary components isomorphic
to a (noncompact) elliptic modular surface. See $\S$6.1.3 in the follow-up paper
\cite{KP2}, where these correspond to the boundary strata labeled $\mathbf{o}^{\{e\}}_2$.

\section{Rigidity and boundary values}

To conclude this article, we wish to highlight how MT domains and
their boundary components provide a convenient structure in which
to think about rigidity and finiteness (Arakelov-type) results for
VHS. We fix once and for all a complex algebraic manifold $\mathcal{S}$
and a point $s_{0}\in\mathcal{S}$. In view of the result of \cite{De2},
that there are only finitely many $\QQ$-local systems $\mathbb{V}\to\mathcal{S}$
of given rank underlying an integral polarizable variation of Hodge
structure, we shall fix $\mathbb{V}$ as well. Writing $V:=\mathbb{V}_{s_{0}}$
($=\mathbb{Q}$-vector space), we have as in $\S9$ the monodromy
representation $\rho$ with image $\Gamma\subset GL(V)$ and geometric
monodromy group $\Pi\subset GL(V)$.

Let $\left(\mathcal{O}_{\mathcal{S}},\mathcal{F}^{\bullet},Q,\mathbb{V}\right)$
be weight $n$ PVHS over $\mathcal{S}$ (abbreviated ``$\mathcal{V}$''),
with MT group $M_{\mathcal{V}}\leq GL(V)$. As $\mathbb{V}$ is fixed,
$\mathcal{V}$ may be recovered from the associated period map%
\footnote{For each $s\in\mathcal{S}$, we may consider $\vf_{\mathcal{V},s}:=\Phi_{\mathcal{V}}(s)$
as a morphism $\mathbb{U}\to Aut(V,Q)$ (defined up to the action
of $\Gamma$).%
} \begin{equation}\label{eqn8.1} \Phi_{\mathcal{V}}:\mathcal{S}\to \Gamma \backslash D,\end{equation}
where $D$ is a (connected component of a) MT domain parametrizing
$Q$-polarized HS on $V$ with the same Hodge numbers $\underline{h}$
as $\mathcal{V}$ and MT group $M_{\mathcal{V}}$. We define the \emph{MT-class}
$[\mathcal{V}]$ of $\mathcal{V}$ (or $\Phi_{\mathcal{V}}$) to be
the \emph{set} of all ($\QQ$-)VHS on $\mathcal{S}$ with local system
$\mathbb{V}$, polarization $Q$, Hodge numbers $\underline{h}$,
and MT group $\leq M_{\mathcal{V}}\leq GL(V)$.%
\footnote{In some ways it is more natural to allow the MT group to be any conjugate
$gM_{\mathcal{V}}g^{-1}$ with $g\in Aut(V,Q)^{\Gamma}=:\mathcal{G}_{\QQ}$;
to get a finiteness result one then has to go modulo the equivalence
relation on such VHS induced by the action of $\mathcal{G}_{\QQ}$. %
}

Following \cite[sec. III.A]{GGK}, there is an almost-direct-product
decomposition $M_{\mathcal{V}}=M_{1}\cdots M_{\ell}\cdot A$, with
$\{M_{i}\}_{i=1}^{\ell}$ $\QQ$-simple, $A$ abelian, and $\Pi=M_{1}\cdots M_{k}=M_{var}$
for some $k\leq\ell$, write $M_{fix}=M_{k+1}\cdots M_{\ell}\cdot A$.
According to the Structure Theorem for VHS {[}op.cit.{]}, by passing
to a finite cover $\tilde{\mathcal{S}}$ (of $\mathcal{S}$) we may
replace \eqref{eqn8.1} by a period map of the form \begin{equation}\label{eqn8.2}\Phi_{\tilde{\mathcal{V}}}:\tilde{\mathcal{S}}\to \tilde{\Gamma}\backslash D_{var}\times D_{fix} \cong \left( \tilde{\Gamma} \backslash M_{var}(\RR)/H_{var}\right) \times M_{fix}(\RR)/H_{fix}\end{equation}
whose projection to $D_{fix}$ is \emph{constant}. (Here $\tilde{\mathcal{V}}$
is the pullback variation, and $\tilde{\Gamma}\leq\Gamma$.) Hodge-theoretically,
\eqref{eqn8.2} simply reflects the splitting \begin{equation}\label{eqn8.3}\lm_{\mathcal{V}}=\lm_{var}\oplus \lm_{fix}\end{equation}
of the weight-zero variation induced by $\tilde{\mathcal{V}}$. In
particular, we note that \begin{equation}\label{eqn8.4}\left(\lm \right)^{\tilde{\Gamma}} = \lm_{fix}.\end{equation}

By a \emph{horizontal local deformation} of $\Phi_{\mathcal{V}}$
in its MT-class, we shall mean an extension of $\mathcal{V}$ (from
$\mathcal{S}\times\{0\}$) to a PVHS over $\mathcal{S}\times\Delta$
with MT group $M_{\mathcal{V}}$. (Here $\Delta$ is the unit disk,
with coordinate $t$.) Define $\mathcal{V}$ (or $\Phi_{\mathcal{V}}$)
to be \emph{MT-rigid} if all such deformations are constant in $t$.

Write $\hat{\Phi}$ for the extension of \eqref{eqn8.2} induced by
such a deformation. Its derivative at $t=0$ gives a section \begin{equation}\label{eqn8.5}\nu:=\left. \frac{\partial \hat{\Phi}}{\partial t} \right|_{\tilde{\mathcal{S}}\times{0}} \in\Gamma\left( \tilde{\mathcal{S}},(\tilde{\mathcal{V}}^{\vee}\otimes\tilde{\mathcal{V}})^{-1,1}\right)\end{equation}
which, by a curvature argument \cite[Thm. 3.2(ii)]{Pe}, is \emph{flat}.
It is therefore a \emph{constant} section of the fixed part, and determined
by its value $\nu(s_{0})\in End(V)^{\Pi}$. In fact, since our deformation
was restricted to the MT domain, using \eqref{eqn8.4} \begin{equation}\label{eqn8.6}\nu\equiv\nu(s_0)\in\lm^{-1,1}_{fix}.\end{equation}Here,
the Hodge decomposition of $\lm_{fix}$ is constant over $\mathcal{S}$
by the Theorem of the Fixed Part \cite{Sc}, and induced by the projection
to $\lm_{fix}$ of $\text{Ad}(\vf_{\mathcal{V},s})$ (for any $s$,
say $s_{0}$).
\begin{prop}
If $\lm_{fix}^{-1,1}=\{0\}$, then $\mathcal{V}$ is MT-rigid.
\end{prop}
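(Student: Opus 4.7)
The plan is to upgrade the first-order vanishing hiding in \eqref{eqn8.5}--\eqref{eqn8.6} to vanishing of $\partial\hat{\Phi}/\partial t$ on all of $\tilde{\mathcal{S}}\times\Delta$ at once. So let $\hat{\mathcal{V}}$ be a horizontal local deformation of $\mathcal{V}$ with MT group $M_{\mathcal{V}}$, inducing $\hat{\Phi}\colon\tilde{\mathcal{S}}\times\Delta\to\tilde{\Gamma}\backslash D$; the goal is to show $\hat{\Phi}$ is independent of $t$.

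The first, and only substantive, step is to verify that the fixed-part Hodge data of $\lm$ used in \eqref{eqn8.6} is really constant in $t$, not just in $s$. For this I would invoke the Theorem of the Fixed Part \cite{Sc} applied to $\hat{\mathcal{V}}$ as a PVHS over the connected base $\mathcal{S}\times\Delta$: since $\Delta$ is simply connected, the monodromy of $\hat{\mathcal{V}}$ is still $\Gamma$, the $\Gamma$-invariants in $\lm$ are still $\lm_{fix}$, and this sub-local-system now inherits a constant sub-variation over $\mathcal{S}\times\Delta$. In particular, the subspace $\lm_{fix}^{-1,1}\subset\lm_{\CC}$ is the same at every point of $\mathcal{S}\times\Delta$, so the standing hypothesis $\lm_{fix}^{-1,1}=\{0\}$ read at $(s_{0},0)$ propagates everywhere.

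Having this, I would rerun the argument of \eqref{eqn8.5}--\eqref{eqn8.6} with an arbitrary $t_{0}\in\Delta$ in place of $0$. The tangent
$$\nu_{t_{0}}\;:=\;\left.\tfrac{\partial\hat{\Phi}}{\partial t}\right|_{\tilde{\mathcal{S}}\times\{t_{0}\}}$$
is horizontal by construction, hence flat by the curvature argument \cite[Thm.~3.2(ii)]{Pe}; flatness forces it into the $\pi_{1}(\tilde{\mathcal{S}})$-invariants of $(\hat{\mathcal{V}}^{\vee}\otimes\hat{\mathcal{V}})^{-1,1}$, that is, into $\lm_{fix}^{-1,1}$, which we have just seen is $\{0\}$. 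Thus $\nu_{t_{0}}\equiv 0$ for every $t_{0}\in\Delta$, so $\hat{\Phi}$ is constant in $t$ and $\mathcal{V}$ is MT-rigid. The main obstacle is the second paragraph -- ruling out the possibility that the fixed-part Hodge decomposition could twist in the $\Delta$-direction, so that $\lm_{fix}^{-1,1}$ might acquire a nontrivial component away from $t=0$ -- and it is resolved cleanly by the two-parameter Fixed Part theorem, after which the $t=0$ calculation already present in the excerpt goes through verbatim at every slice.
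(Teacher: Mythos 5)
Your skeleton coincides with the paper's argument (the curvature result of \cite[Thm. 3.2(ii)]{Pe} makes $\partial\hat{\Phi}/\partial t$ flat along each slice, flatness plus \eqref{eqn8.4} puts it in $\lm_{fix}^{-1,1}$, and the hypothesis kills it), and you are right that to conclude genuine constancy in $t$ one must run this at every slice, a point the paper leaves implicit. The gap is in your first step. The Theorem of the Fixed Part cannot be invoked over $\mathcal{S}\times\Delta$: it is proved for bases that are (Zariski open in) algebraic, or compact K\"ahler, manifolds, and it genuinely fails when the base has a disk factor, because the $\Delta$-direction contributes variation but no monodromy. In fact the conclusion you extract from it -- that for an \emph{arbitrary} horizontal deformation the sub-local-system $\lm_{fix}$ underlies a \emph{constant} sub-variation over $\mathcal{S}\times\Delta$ -- is false: take $\mathcal{V}=\mathcal{V}_{var}\oplus\mathcal{V}_{cst}$ with $\mathcal{V}_{cst}$ a constant summand, and deform by moving only $\mathcal{V}_{cst}$ with $t$; the monodromy and the Mumford--Tate group $M_{\mathcal{V}}$ of the two-parameter family are unchanged, yet the Hodge structure on $\lm_{fix}$ visibly moves with $t$. (This is no counterexample to the Proposition, since such a deformation forces $\lm_{fix}^{-1,1}\neq\{0\}$; but it does contradict your intermediate claim, which you assert \emph{before} using the hypothesis, so the claim cannot be a consequence of any fixed-part theorem.)

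What is true, and is all you need, is weaker. Since $\lm_{fix}$ is an $\Ad M_{\mathcal{V}}$-submodule of $\lm$ and every fiber of the deformation has Mumford--Tate group contained in $M_{\mathcal{V}}$, $\lm_{fix}$ is a sub-Hodge structure of $\lm$ at every point $(s,t)$, and it is constant in $s$ by the Theorem of the Fixed Part applied to each algebraic slice $\mathcal{S}\times\{t\}$ (a legitimate application). Its Hodge numbers $h^{r,-r}(t)=\dim\bigl(\lm_{fix,\CC}\cap F^{r}_{(s,t)}\cap\overline{F^{-r}_{(s,t)}}\bigr)$ are then upper semicontinuous in $t$, while their sum is the constant $\dim\lm_{fix}$; hence each is constant in $t$, and in particular $\dim\lm_{fix}^{-1,1}$ remains $0$ on all of $\Delta$. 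With this substitute for your second paragraph, your slice-by-slice argument ($\nu_{t_0}$ flat, of type $(-1,1)$ for the slice Hodge structure, hence in $\lm_{fix}^{-1,1}=\{0\}$ for every $t_0$) goes through and recovers the paper's proof with the $t$-dependence made explicit.
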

Turning to the cardinality of $[\mathcal{V}]$, instead of complex
variations (as in \cite{Pe},\cite{De2}) we shall make use of the
Structure Theorem and the following simple uniqueness result.
\begin{lem}
\label{lem10.2}Let $\mathcal{H}$ and $\mathcal{H}'$ be two polarizable
weight $n$ VHS over $\mathcal{S}$ with the same underlying local
system $\mathbb{H}$. If $\mathbb{H}_{\mathbb{C}}$ is irreducible
then $\mathcal{H}=\mathcal{H}'$.\end{lem}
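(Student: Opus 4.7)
The plan is to recast the equality $\mathcal{H}=\mathcal{H}'$ as a statement about a single flat global section of $\mathcal{H}om(\mathcal{H},\mathcal{H}')$, and then leverage Schmid's Theorem of the Fixed Part together with Schur's lemma to pin down its Hodge type. Concretely, I would consider the tensor product VHS
\[
\mathcal{E}:=\mathcal{H}^{\vee}\otimes\mathcal{H}',
\]
which is a polarizable weight-zero variation with underlying local system $\mathbb{E}=\mathbb{H}^{\vee}\otimes\mathbb{H}$. The identity isomorphism $\mathrm{id}:\mathbb{H}\to\mathbb{H}$ gives a distinguished flat global section $\iota\in\Gamma(\mathcal{S},\mathbb{E})$.

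Next, I would identify the full space of flat global sections. By the tensor-hom adjunction and Schur's lemma,
\[
\Gamma(\mathcal{S},\mathbb{E}_{\CC})=\mathrm{Hom}_{\pi_1(\mathcal{S},s_0)}(\mathbb{H}_{\CC},\mathbb{H}_{\CC})=\CC\cdot\iota,
\]
using the irreducibility hypothesis on $\mathbb{H}_{\CC}$. By the Theorem of the Fixed Part \cite{Sc}, this one-dimensional subspace is a sub-Hodge structure of the fiber $\mathcal{E}_{s_0}$, and hence pure of some type $(p,-p)$. Since $\iota$ is manifestly defined over $\QQ$ (a fortiori real), one must have $(p,-p)=(-p,p)$, i.e.\ $p=0$.

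Consequently $\iota$ lies in $F^0\mathcal{E}$, which is exactly the statement that $\mathrm{id}:\mathcal{H}\to\mathcal{H}'$ is a morphism of VHS, i.e.\ $F^p\mathcal{H}\subseteq F^p\mathcal{H}'$ for every $p$. Running the identical argument with the roles of $\mathcal{H}$ and $\mathcal{H}'$ reversed yields the reverse inclusion, and hence $\mathcal{H}=\mathcal{H}'$ as VHS.

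I do not anticipate a serious obstacle: the only points requiring care are verifying that $\iota$ is really a rational (hence real) class — so that the reality constraint $\overline{I^{p,-p}}=I^{-p,p}$ forces $p=0$ — and invoking the Theorem of the Fixed Part in the polarizable (not necessarily polarized) setting, which is standard since a polarization may be chosen on each of $\mathcal{H}$ and $\mathcal{H}'$ independently to induce a polarization on $\mathcal{E}$ (and the conclusion about the Hodge type of $\iota$ is independent of this choice).
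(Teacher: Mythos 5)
Your argument is correct and is essentially the paper's own proof: both consider $\mathcal{E}=\mathcal{H}^{\vee}\otimes\mathcal{H}'$, apply the Theorem of the Fixed Part so that the flat sections form a constant sub-VHS, use Schur's lemma with the irreducibility of $\mathbb{H}_{\CC}$ to see these sections are spanned by $\mathrm{id}_{\mathbb{H}}$, and conclude that $\mathrm{id}_{\mathbb{H}}$ is of type $(0,0)$, hence a morphism of VHS. Your extra remarks (rationality forcing type $(0,0)$, running the argument symmetrically, choosing polarizations) only make explicit what the paper leaves tacit.
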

\begin{proof}
According to the Theorem of the Fixed Part, $End_{\mathcal{S}}(\mathbb{H},\mathbb{H})$
must underlie a subvariation of $\mathcal{H}^{\vee}\otimes\mathcal{H}'$.
The hypothesis implies $End_{\mathcal{S}}(\mathbb{H},\mathbb{H})\cong\QQ\langle\text{id}_{\mathbb{H}}\rangle$
by Schur's lemma. Therefore $\text{id}_{\mathbb{H}}\in(\mathcal{H}^{\vee}\otimes\mathcal{H}')^{(0,0)}$
and is a morphism of VHS.
\end{proof}
Note that in the Lemma, we need \emph{not} assume that $\mathcal{H}$
and $\mathcal{H}'$ have the same Hodge numbers or polarization.
\begin{example}
\label{10ex}If $\mathcal{S}$ is a curve and $\mathbb{H}$ has maximal
unipotent monodromy of order equal to its rank about a point of $\bar{\mathcal{S}}\backslash\mathcal{S}$,
there is at most one polarizable VHS on $\mathbb{H}$. This is the
case both for the ``Calabi-Yau variations'' investigated by Doran
and Morgan \cite{DM} and for the $G_{2}$-variation in $\S9$.
\end{example}
In the period domain for $Q$-polarized HS on $V$ with Hodge numbers
$\underline{h}$, the locus $NL(M_{\mathcal{V}})$ of HS with MT group
contained in $M_{\mathcal{V}}$ is a finite union of (connected) MT
domains. Within each such domain, a HS $\vf$ is determined by the
weight-zero structure it induces on $\lm_{\mathcal{V}}$, which necessarily
splits into $\lm_{1}\oplus\cdots\oplus\lm_{\ell}\oplus\mathfrak{a}$
as these summands are closed under the adjoint action of $M_{\mathcal{V}}$.
Further, since $\text{Ad}\circ\vf$ acts trivially on the abelian
part $\mathfrak{a}$, we have $\mathfrak{a}_{\CC}=\mathfrak{a}^{0,0}$.

Write $\underline{m}=\underline{m}_{1}\oplus\cdots\oplus\underline{m}_{\ell}\oplus\underline{a}\subset\mathbb{V}^{\vee}\otimes\mathbb{V}$
for the $\QQ$-local system arising from $\rho^{\vee}\otimes\rho|_{\lm_{\mathcal{V}}}$.
Let $\mathsf{m}_{\mathcal{V}}=\mathsf{m}_{\mathcal{V},1}\oplus\cdots\oplus\mathsf{m}_{\mathcal{V},\ell}\oplus\mathsf{a}$
and $\mathsf{m}_{\mathcal{W}}=\mathsf{m}_{\mathcal{W},1}\oplus\cdots\oplus\mathsf{m}_{\mathcal{V},\ell}\oplus\mathsf{a}_{\mathcal{W}}$
denote the VHS on $\underline{m}$ induced by $\mathcal{V}$ and some
other variation $\mathcal{W}\in[\mathcal{V}]$ with period mapping
$\Phi_{\mathcal{W}}:\mathcal{S}\to\Gamma\backslash D$ into the same
MT domain. Assuming that the $\{M_{i}\}_{1\leq i\leq k}$ are $\CC$-simple,
the $\{\underline{m}_{i}\}_{1\leq i\leq k}$ are absolutely irreducible
and Lemma \ref{lem10.2} immediately implies that $\mathsf{m}_{\mathcal{V},i}=\mathsf{m}_{\mathcal{W},i}$
($1\leq i\leq k$). Noting that $\mathsf{a}_{\mathcal{V}}=\mathsf{a}_{\mathcal{W}}$
(as both are trivial) and $DM:=[M,M]=M_{1}\cdots M_{\ell}$, we have
proved the 
\begin{thm}
If $\Pi=DM_{\mathcal{V}}$ and the $\QQ$-simple factors $M_{i}$
of $M_{\mathcal{V}}$ are absolutely simple, then $[\mathcal{V}]$
is finite; more precisely, we have $|[\mathcal{V}]|\leq|\pi_{0}(NL(M_{\mathcal{V}}))|\,(<\infty)$.\end{thm}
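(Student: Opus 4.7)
The plan is to show that within each connected component of $NL(M_\mathcal{V})$ at most one element of $[\mathcal{V}]$ can have period map landing there. Since $NL(M_\mathcal{V})$ is a finite union of Mumford--Tate subdomains (noted in the paragraph preceding the theorem), this will immediately yield the stated bound and finiteness. First I would observe that for any $\mathcal{W}\in[\mathcal{V}]$, the containment $M_\mathcal{W}\subseteq M_\mathcal{V}$ forces $\Phi_\mathcal{W}(\mathcal{S})$ into $\Gamma\backslash NL(M_\mathcal{V})$, and by connectedness of $\mathcal{S}$ into the $\Gamma$-quotient of a single component $D'$ thereof. So it suffices to argue uniqueness of $\mathcal{W}$ given the choice of $D'$.

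Within $D'$, a HS is recovered from its induced adjoint HS on $\lm_\mathcal{V}$, via the identification of a MT domain with its Hodge-domain image under $\mathrm{Ad}$ (cf.\ the $D$-vs.-${}'D$ discussion in $\S 1$). So $\mathcal{V}=\mathcal{W}$ iff the weight-zero VHS $\mathsf{m}_\mathcal{V},\,\mathsf{m}_\mathcal{W}$ on $\underline{m}$ coincide. Decomposing along the almost-direct product $M_\mathcal{V}=M_1\cdots M_\ell\cdot A$, write $\mathsf{m}_\bullet=\bigoplus_i \mathsf{m}_{\bullet,i}\oplus\mathsf{a}_\bullet$. The abelian summand satisfies $\mathfrak{a}_\CC=\mathfrak{a}^{0,0}$ (since $\mathrm{Ad}\circ\vf$ acts trivially on $\mathfrak{a}$), forcing $\mathsf{a}_\mathcal{V}=\mathsf{a}_\mathcal{W}$ as the unique constant $(0,0)$-structure. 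For each simple factor, the hypothesis $\Pi=DM_\mathcal{V}=M_1\cdots M_\ell$ ensures that the monodromy image is Zariski-dense in each $M_i$, and absolute simplicity of $M_i$ then upgrades absolute irreducibility of its adjoint representation on $\lm_i$ to absolute irreducibility of $\underline{m}_{i,\CC}$ as a $\pi_1(\mathcal{S})$-representation. Lemma \ref{lem10.2} applied factor by factor then gives $\mathsf{m}_{\mathcal{V},i}=\mathsf{m}_{\mathcal{W},i}$, so the adjoint variations agree, so $\mathcal{V}=\mathcal{W}$.

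The main obstacle, conceptually, is the absolute irreducibility of $\underline{m}_{i,\CC}$: both hypotheses of the theorem are needed simultaneously — $\Pi=DM_\mathcal{V}$ so that monodromy ``sees'' all of $M_i$ up to Zariski closure, and absolute simplicity of each $M_i$ so that its adjoint representation does not split further upon extending scalars to $\CC$. A minor technical point is that the $M_i$ sit as \emph{almost}-direct (not direct) factors of $DM_\mathcal{V}$, so one must verify that the projection $\Pi\twoheadrightarrow M_i$ is surjective as algebraic groups; this follows from the definition of almost-direct product, the finite central overlaps being irrelevant for Zariski closure. Everything else (the $\mathrm{Ad}$-identification of $D'$, the triviality of $\mathsf{a}$, the application of Lemma \ref{lem10.2}) essentially echoes the paragraph preceding the theorem statement, so only the passage from ``same component'' to ``bound by number of components'' needs to be spelled out explicitly.
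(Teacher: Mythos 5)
Your proposal is correct and follows essentially the same route as the paper: restrict each $\mathcal{W}\in[\mathcal{V}]$ to a single connected component of $NL(M_{\mathcal{V}})$, use that a Hodge structure there is determined by the induced weight-zero structure on $\lm_{\mathcal{V}}=\lm_{1}\oplus\cdots\oplus\lm_{\ell}\oplus\mathfrak{a}$, dispose of $\mathfrak{a}$ by triviality of the adjoint action, and combine absolute irreducibility of the $\underline{m}_{i,\CC}$ (coming from $\Pi=DM_{\mathcal{V}}$ and absolute simplicity of the $M_{i}$) with Lemma \ref{lem10.2} to get $\mathsf{m}_{\mathcal{V}}=\mathsf{m}_{\mathcal{W}}$, hence the bound by $|\pi_{0}(NL(M_{\mathcal{V}}))|$. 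Your added remarks (surjectivity onto the almost-direct factors, the explicit passage from same component to the component count) only make explicit what the paper leaves tacit.
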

\begin{rem}
\label{10rm}(i) Hypotheses on $\mathcal{V}$ which ensure $\Pi=DM_{\mathcal{V}}$
are the presence of a CM point \cite[sec. 6]{An} or a graded-CM LMHS
\cite[sec. 8]{KP}.

(ii) An approach to computing $|\pi_{0}(NL(M_{\mathcal{V}}))|$ is
described in \cite[Ch. 6]{GGK}.
\end{rem}
Beyond their relation to the hypotheses in Example \ref{10ex} and
Remark \ref{10rm}(i), boundary components provide (together with
the choice of $D\in\pi_{0}(NL(M_{\mathcal{V}}))$) a means of \emph{parametrizing}
$[\mathcal{V}]$. It is well-known (cf. \cite[Cor. 12]{PS}) that
any $\mathcal{W}\in[\mathcal{V}]$ is determined by its restriction
to $s_{0}$; in the same spirit, one has the next
\begin{prop}
Suppose $\mathcal{S}$ is a curve, and $x_{0}\in\bar{\mathcal{S}}\backslash\mathcal{S}$
a point. Then any $\mathcal{W}\in[\mathcal{V}]$ is determined by
its LMHS $\psi_{x_{0}}\mathcal{W}$ at $x_{0}$.\end{prop}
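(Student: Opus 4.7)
The plan is to consider the auxiliary PVHS $\mathcal{H}:=\mathcal{V}^{\vee}\otimes\mathcal{W}$ on the local system $\mathbb{V}^{\vee}\otimes\mathbb{V}=\underline{\mathrm{End}}(\mathbb{V})$ and to exploit its global flat section $\mathrm{id}_{\mathbb{V}}$ as a detector of equality of Hodge filtrations. Since $\mathcal{V},\mathcal{W}\in[\mathcal{V}]$ share the Hodge numbers $\underline{h}$, once I know $\mathrm{id}_{\mathbb{V}}\in F^{0}\mathcal{H}_{s}$ at every $s\in\mathcal{S}$ I conclude $\mathcal{V}_{s}^{p}\subseteq\mathcal{W}_{s}^{p}$ for each $p$; dimension count then forces $\mathcal{V}=\mathcal{W}$, which is the desired conclusion.

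Step~1 is to invoke the Theorem of the Fixed Part. Because $\mathrm{id}_{\mathbb{V}}$ is $\Gamma$-invariant, it lies in a constant sub-VHS $\mathcal{H}^{\mathrm{fix}}\subseteq\mathcal{H}$ whose underlying local system is $\mathrm{End}(V)^{\Gamma}$. Constancy of $\mathcal{H}^{\mathrm{fix}}$ means that the Hodge filtration on any fiber $\mathcal{H}^{\mathrm{fix}}_{s}$ coincides tautologically with its limit $\psi_{x_{0}}\mathcal{H}^{\mathrm{fix}}$ at $x_{0}$ (the monodromy on $\mathcal{H}^{\mathrm{fix}}$ being trivial, there is nothing to twist), so the membership question $\mathrm{id}_{\mathbb{V}}\in F^{0}\mathcal{H}^{\mathrm{fix}}_{s}$ transports to the question of whether $\mathrm{id}_{\mathbb{V}}\in F^{0}\psi_{x_{0}}\mathcal{H}^{\mathrm{fix}}$.

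Step~2 is to use functoriality of the LMHS under Hom/tensor (cf.\ Remark~\ref{rem ?}(a)) to identify $\psi_{x_{0}}\mathcal{H}=\mathrm{Hom}(\psi_{x_{0}}\mathcal{V},\psi_{x_{0}}\mathcal{W})$; under the standing hypothesis this is just $\mathrm{End}(\psi_{x_{0}}\mathcal{V})$, in which $\mathrm{id}_{\mathbb{V}}$ is visibly of type $(0,0)$ and thus in $F^{0}$. Strictness of the inclusion $\psi_{x_{0}}\mathcal{H}^{\mathrm{fix}}\hookrightarrow\psi_{x_{0}}\mathcal{H}$ as a morphism of MHS then transfers this membership to $\psi_{x_{0}}\mathcal{H}^{\mathrm{fix}}$, closing the loop with Step~1.

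The main obstacle, modest as it is, amounts to careful bookkeeping for two compatibilities invoked above: functoriality of $\psi_{x_{0}}$ under Hom and tensor, and the coincidence of $\psi_{x_{0}}$ of a constant VHS with that VHS itself. Both are direct consequences of Schmid's nilpotent orbit theorem together with the tensor-functoriality of the relative weight filtration (as used implicitly throughout \S\S2--7), so no further analytic input is needed; in particular the argument goes through uniformly, independent of the rank of the boundary component $B(\sigma)$ parametrizing $\psi_{x_{0}}\mathcal{V}$.
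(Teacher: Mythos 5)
Your proposal is correct and is essentially the paper's own argument: form the weight-zero Hom-variation, use the Theorem of the Fixed Part to see that $\mathrm{id}_{\mathbb{V}}$ lives in a constant sub-VHS whose limit is itself, identify the limit of the Hom-variation with $\mathrm{Hom}$ of the two LMHS, and use strictness to pull the $(0,0)$/$F^{0}$ property of $\mathrm{id}_{\mathbb{V}}$ back to the fixed part, whence $\mathrm{id}_{\mathbb{V}}$ is a morphism of VHS. The only (immaterial) difference is that the paper compares two arbitrary elements $\mathcal{W},\mathcal{W}'\in[\mathcal{V}]$ via $\mathcal{E}=\mathcal{W}^{\vee}\otimes\mathcal{W}'$, while you pin one of them to $\mathcal{V}$; since nothing in the argument uses special properties of $\mathcal{V}$ beyond membership in $[\mathcal{V}]$, this is just notation.
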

\begin{proof}
Let $\mathcal{W},\mathcal{W}'\in[\mathcal{V}]$, and put $\mathcal{E}:=\mathcal{W}^{\vee}\otimes\mathcal{W}'$
(with underlying local system $\mathbb{V}^{\vee}\otimes\mathbb{V}$).
Since the fixed part $E_{fix}\otimes\mathcal{O}_{\mathcal{S}}\cong\mathcal{E}_{fix}\underset{\alpha}{\hookrightarrow}\mathcal{E}$
is a (constant) sub-VHS, 
\[
E_{fix}\cong\psi_{x_{0}}\mathcal{E}_{fix}\underset{\alpha_{lim}}{\hookrightarrow}\psi_{x_{0}}\mathcal{E}\cong(\psi_{x_{0}}\mathcal{W})^{\vee}\otimes(\psi_{x_{0}}\mathcal{W}')
\]
is a sub-MHS. If $\psi_{x_{0}}\mathcal{W}\cong\psi_{x_{0}}\mathcal{W}'$,
then $\alpha_{lim}(\text{id}_{V})$ is Hodge of type $(0,0)$ in $\psi_{x_{0}}\mathcal{E}$.
By strictness of morphisms of MHS, $\text{id}_{V}$ is Hodge in $E_{fix}$.
Hence, $\text{id}_{V}$ is Hodge $(0,0)$ in $\mathcal{E}_{fix}$
\emph{a fortiori} in $\mathcal{E}$, and $\mathcal{W}=\mathcal{W}'$.
\end{proof}
When $\mathcal{V}$ is rigid in $[\mathcal{V}]$ (let alone $|[\mathcal{V}]|<\infty$),
it is natural to expect that the LMHS at $x_{0}$ has arithmetic significance
-- particularly if $\mathcal{V}$ is motivic and $\mathcal{S}$ defined
over $\bar{\mathbb{Q}}$ (cf. \cite[Conj. (III.B.5)]{GGK2}). A classic
example of this is the class of the LMHS of the mirror quintic $(1,1,1,1)$-VHS
at the maximal unipotent monodromy point, which is given by $-200\zeta(3)\in\CC/\QQ(3)\cong Ext_{\text{MHS}}^{1}(\QQ(0),\QQ(3))$
{[}op. cit., sec. III.A{]}. We expect that the Picard-Fuchs equations
identified in \cite{DM} will allow one to examine how this invariant
changes with the choice of local system. Moreover, it seems likely
that the corresponding invariant (at the type I boundary component)
for the $G_{2}$ example of $\S9$ is a rational multiple of $\zeta(5)$.
See \cite[$\S$5]{DKP} for some progress in this direction.

\address{\noun{Department of Mathematics, Washington University in St. Louis,
Campus Box 1146, One Brookings Drive, St. Louis, MO} \noun{63130-4899,
USA}\\
\emph{E-mail address}: matkerr@math.wustl.edu}

${}$

\address{\noun{Mathematics Department, Mail stop 3368, Texas A\&M University, College Station, TX 77843, USA}\\
\emph{E-mail address}: gpearl@math.tamu.edu}
\end{document}